\documentclass[english,reqno]{amsart}
\usepackage[T1]{fontenc}
\usepackage[latin9]{inputenc}
\setcounter{secnumdepth}{2}
\setcounter{tocdepth}{2}
\usepackage{babel}
\usepackage{refstyle}
\usepackage{mathrsfs}
\usepackage{mathtools}
\usepackage{enumitem}
\usepackage{amstext}
\usepackage{amsthm}
\usepackage{amssymb}
\usepackage[all]{xy}
\PassOptionsToPackage{normalem}{ulem}
\usepackage{ulem}
\usepackage[unicode=true,pdfusetitle,
 bookmarks=true,bookmarksnumbered=false,bookmarksopen=false,
 breaklinks=false,pdfborder={0 0 1},backref=false,colorlinks=false]
 {hyperref}
\hypersetup{
 colorlinks=true,citecolor=blue,linkcolor=blue,linktocpage=true}

\makeatletter


\AtBeginDocument{\providecommand\remref[1]{\ref{rem:#1}}}
\AtBeginDocument{\providecommand\thmref[1]{\ref{thm:#1}}}
\AtBeginDocument{\providecommand\lemref[1]{\ref{lem:#1}}}
\AtBeginDocument{\providecommand\corref[1]{\ref{cor:#1}}}
\AtBeginDocument{\providecommand\exaref[1]{\ref{exa:#1}}}
\AtBeginDocument{\providecommand\defref[1]{\ref{def:#1}}}
\AtBeginDocument{\providecommand\secref[1]{\ref{sec:#1}}}
\AtBeginDocument{\providecommand\propref[1]{\ref{prop:#1}}}
\RS@ifundefined{subsecref}
  {\newref{subsec}{name = \RSsectxt}}
  {}
\RS@ifundefined{thmref}
  {\def\RSthmtxt{theorem~}\newref{thm}{name = \RSthmtxt}}
  {}
\RS@ifundefined{lemref}
  {\def\RSlemtxt{lemma~}\newref{lem}{name = \RSlemtxt}}
  {}

\numberwithin{equation}{section}
\numberwithin{figure}{section}
\numberwithin{table}{section}
\theoremstyle{plain}
\newtheorem{thm}{\protect\theoremname}[section]
  \theoremstyle{remark}
  \newtheorem{rem}[thm]{\protect\remarkname}
  \theoremstyle{definition}
  \newtheorem{defn}[thm]{\protect\definitionname}
  \theoremstyle{plain}
  \newtheorem{lem}[thm]{\protect\lemmaname}
  \theoremstyle{remark}
  \newtheorem*{claim*}{\protect\claimname}
  \theoremstyle{plain}
  \newtheorem{cor}[thm]{\protect\corollaryname}
  \theoremstyle{definition}
  \newtheorem{example}[thm]{\protect\examplename}
  \theoremstyle{plain}
  \newtheorem{prop}[thm]{\protect\propositionname}
  \theoremstyle{remark}
  \newtheorem*{acknowledgement*}{\protect\acknowledgementname}


\providecommand{\MR}[1]{}

\usepackage{bbm}
\allowdisplaybreaks
\usepackage{needspace}

\setlist[enumerate]{itemsep=5pt,topsep=3pt}
\setlist[enumerate,1]{label=\textup{(}\roman*\textup{)},ref=\roman*}
\setlist[enumerate,2]{label=\textup{(}\alph*\textup{)},ref=\theenumi \alph*}

\newref{lem}{refcmd={Lemma \ref{#1}}}
\newref{thm}{refcmd={Theorem \ref{#1}}}
\newref{cor}{refcmd={Corollary \ref{#1}}}
\newref{sec}{refcmd={Section \ref{#1}}}
\newref{subsec}{refcmd={Section \ref{#1}}}
\newref{chap}{refcmd={Chapter \ref{#1}}}
\newref{prop}{refcmd={Proposition \ref{#1}}}
\newref{exa}{refcmd={Example \ref{#1}}}
\newref{tab}{refcmd={Table \ref{#1}}}
\newref{rem}{refcmd={Remark \ref{#1}}}
\newref{def}{refcmd={Definition \ref{#1}}}
\newref{fig}{refcmd={Figure \ref{#1}}}
\newref{claim}{refcmd={Claim \ref{#1}}}

\AtBeginDocument{
  
}

\makeatother

  \providecommand{\acknowledgementname}{Acknowledgement}
  \providecommand{\claimname}{Claim}
  \providecommand{\corollaryname}{Corollary}
  \providecommand{\definitionname}{Definition}
  \providecommand{\examplename}{Example}
  \providecommand{\lemmaname}{Lemma}
  \providecommand{\propositionname}{Proposition}
  \providecommand{\remarkname}{Remark}
\providecommand{\theoremname}{Theorem}

\begin{document}

\title[Interpolation and reproducing kernels]{Reproducing kernels and choices of associated feature spaces, in
the form of $L^{2}$-spaces}

\author{Palle Jorgensen}

\address{(Palle E.T. Jorgensen) Department of Mathematics, The University
of Iowa, Iowa City, IA 52242-1419, U.S.A. }

\email{palle-jorgensen@uiowa.edu}

\urladdr{http://www.math.uiowa.edu/\textasciitilde{}jorgen/}

\author{Feng Tian}

\address{(Feng Tian) Department of Mathematics, Hampton University, Hampton,
VA 23668, U.S.A.}

\email{feng.tian@hamptonu.edu}
\begin{abstract}
Motivated by applications to the study of stochastic processes, we
introduce a new analysis of positive definite kernels $K$, their
reproducing kernel Hilbert spaces (RKHS), and an associated family
of feature spaces that may be chosen in the form $L^{2}\left(\mu\right)$;
and we study the question of which measures $\mu$ are right for a
particular kernel $K$. The answer to this depends on the particular
application at hand. Such applications are the focus of the separate
sections in the paper.
\end{abstract}

\subjclass[2000]{Primary 47L60, 46N30, 46N50, 42C15, 65R10, 05C50, 05C75, 31C20, 60J20;
Secondary 46N20, 22E70, 31A15, 58J65, 81S25, 68T05.}

\keywords{Reproducing kernel Hilbert space, frames, generalized Ito-integration,
the measurable category, analysis/synthesis, interpolation, Gaussian
free fields, non-uniform sampling, optimization, transform, covariance,
feature space.}

\maketitle
\tableofcontents{}

\section{Introduction}

The use of reproducing kernels and their reproducing kernel Hilbert
spaces (RKHSs) was initially motivated by problems in classical analysis,
and it was put into an especially attractive and useful form by Aronszajn
in the 1950ties. Since then the applications of kernel theory has
greatly expanded, both in pure and applied mathematics. An application
of more recent vintage is machine learning. The number of applied
areas include use of RKHSs in the study of stochastic processes, especially
as a tool in Ito calculus; and in machine learning (ML). The last
two are related, and they are the focus of our present paper. Dictated
by a number of practical applications of the theory of ML, starting
with a positive definite (p.d.) kernel $K$, it has proved useful
to study both the associated RKHS itself, as well as a variety of
choices of feature spaces (for details, see \remref{p1} inside the
paper); and the interplay between them.

Now motivated by related applications to the study of stochastic processes,
it is of special significance to focus on the cases when the family
of feature spaces may be chosen in the form $L^{2}\left(\mu\right)$;
but this then raises the question of which measures $\mu$ are right
for a particular kernel $K$, and its associated RKHS. The answer
to this depends on the particular application at hand. Such applications
are the focus of the separate sections below inside the paper. 

In our study of RKHSs and choices of feature spaces, we have focused
on those of especial relevance to analysis of Gaussian calculus, but
there are many others, for example, functional and harmonic analysis,
boundary value problems, PDE, geometry and geometric analysis, operator
algebras/theory, the theory of unitary representations, mathematical
physics, and the study of fractals and fractal measures. Even this
list is not exhaustive. Nonetheless, we have narrowed our scope, and
our choice of applications, for the present paper. The reader will
be able to follow up on the various other directions, not covered
here, with the use of our cited references, see especially our discussion
of the literature below.

\textbf{Discussion of the literature.} The theory of RKHS and their
applications is vast, and below we only make a selection. Readers
will be able to find more cited there. As for the general theory of
RKHS in the pointwise category, we find useful \cite{ABDdS93,AD92,AD93,MR2529882,MR3526117}.
The applications include fractals (see e.g., \cite{AJSV13,MR0008639});
probability theory \cite{MR3571702,MR3504608,MR3379106,MR0277027,MR3624688,MR3622604};
and learning theory \cite{MR2186447,MR2352607,MR2644077,MR3341221,MR3560092,MR3491114,MR3564937,MR3567444,MR3600372,MR2354721}.

\section{Reproducing kernels}

The present setting begins with a fixed positive definite (p.d.) kernel
$K$, i.e., a function $K:S\times S\longrightarrow\mathbb{R}$ where
$S$ is a set, and satisfying
\begin{equation}
\sum\nolimits _{i}\sum\nolimits _{j}\alpha_{i}\alpha_{j}K\left(s_{i},s_{j}\right)\geq0\label{eq:a1}
\end{equation}
for all $\left\{ \alpha_{i}\right\} _{1}^{n}$, $\left\{ s_{i}\right\} _{1}^{n}$,
$\alpha_{i}\in\mathbb{R}$, $s_{i}\in S$, and $n\in\mathbb{N}$. 
\begin{rem}
Even though we shall state our definitions and results in the special
case of real valued functions, the complex case will result from our
present setting with only minor modifications. But in order to minimize
technical points, we have restricted the present discussion to the
\emph{real} case. 

The two more general settings are as follows: (i) complex; and (ii)
operator valued.
\begin{enumerate}
\item There the definition is as in (\ref{eq:a1}), but now $K:S\times S\longrightarrow\mathbb{C}$,
and the p.d. assumption is instead:
\[
\sum\nolimits _{i}\sum\nolimits _{j}\alpha_{i}\overline{\alpha}_{j}K\left(s_{i},s_{j}\right)\geq0
\]
for all choices of $\left\{ \alpha_{i}\right\} $ and $\left\{ s_{i}\right\} $,
$\alpha_{i}\in\mathbb{C}$, $s_{i}\in S$, $1\leq i\leq n$. 
\item Let $H$ be a complex Hilbert space, and let $\mathscr{B}\left(H\right)=$
the algebra of all bounded linear operators in $H$, i.e., $H\longrightarrow H$.
In this case, our setting for the kernel $K$ is: $K:S\times S\longrightarrow\mathscr{B}\left(H\right)$,
and now we assume instead that, for all $s_{i}$, $h_{i}$, with $s_{i}\in S$,
$h_{i}\in H$, and $1\leq i\leq n$; and all $n\in\mathbb{N}$, we
have: 
\[
\sum\nolimits _{i}\sum\nolimits _{j}\left\langle K\left(s_{i},s_{j}\right)h_{i},h_{j}\right\rangle _{H}\geq0.
\]
\end{enumerate}
\end{rem}

\begin{defn}
\label{def:fs}Given a positive definite (p.d.) kernel $K$ on $S$,
we shall consider pairs $\left(F,\mathbf{H}\right)$ where $\mathbf{H}$
is a Hilbert space, and $F:S\longrightarrow\mathbf{H}$ is a function
satisfying 
\[
\left\langle F\left(s\right),F\left(t\right)\right\rangle _{\mathbf{H}}=K\left(s,t\right),\;\forall s,t\in S.
\]
If $\left(F,\mathbf{H}\right)$ satisfies this, we say that $\mathbf{H}$
is a \emph{feature space}, or a feature Hilbert space. 
\end{defn}

\begin{rem}
In a general setup, reproducing kernel Hilbert spaces (RKHSs) were
pioneered by Aronszajn in the 1950s \cite{MR0008639,MR0051437}; and
subsequently they have been used in a host of applications. The key
idea of Aronszajn is that a RKHS is a Hilbert space $\mathscr{H}\left(K\right)$
of functions $f$ on a set such that the values $f(x)$ are \textquotedblleft reproduced\textquotedblright{}
from $f$ and a vector $K_{x}$ in $\mathscr{H}\left(K\right)$, in
such a way that the inner product $\langle K_{x},K_{y}\rangle=:K\left(x,y\right)$
is a positive definite kernel. 

By a theorem of Kolmogorov, every Hilbert space may be realized as
a (Gaussian) reproducing kernel Hilbert space (RKHS), see e.g., \cite{PaSc75,IM65},
and the details below.
\end{rem}

Let $\left(\Omega,\mathscr{C},\mathbb{P}\right)$ be a probability
space. We will be interested in centered Gaussian processes $\left(X_{s}\right)_{s\in S}$
(see e.g., \cite{MR3559026,MR0133175}), indexed by $S$, satisfying
\begin{enumerate}
\item \label{enu:a1}$X_{s}$ is Gaussian w.r.t. a probability space $\left(\Omega,\mathscr{C},\mathbb{P}\right)$,
\item \label{enu:a2}$X_{s}\in L^{2}\left(\Omega,\mathbb{P}\right)$, and
\begin{align}
\mathbb{E}\left(X_{s}\right) & =0,\label{eq:a2}\\
\mathbb{E}\left(X_{s}X_{t}\right) & =K\left(s,t\right),\;\forall s,t\in S,
\end{align}
where $\mathbb{E}$ denotes the expectation with respect to $\mathbb{P}$. 
\end{enumerate}
Given a p.d. kernel $K$, it is well known that a Gaussian realization
as in (\ref{enu:a1})-(\ref{enu:a2}) always exists; in fact, we may
choose $\mathbb{P}$ such that $\Omega=\mathbb{R}^{S}=$ all functions
on $S$, $\mathscr{C}=$ the corresponding cylinder $\sigma$-algebra
of subsets of $\Omega$; and 
\begin{equation}
X_{s}\left(\omega\right):=\omega\left(s\right),\;\forall\omega\in\Omega,\:s\in S.\label{eq:a3}
\end{equation}

A p.d. kernel of particular interest in the present paper will be
as follows: 

Let $\left(V,\mathscr{B},\mu\right)$ be a measure space, where $\mu$
is assumed positive and \emph{$\sigma$-finite}. Set $\mathscr{B}_{fin}:=\left\{ A\in\mathscr{B}\mathrel{;}\mu<\infty\right\} $.
On $\mathscr{B}_{fin}\times\mathscr{B}_{fin}$, then define $K^{\left(\mu\right)}$
by 
\begin{equation}
K^{\left(\mu\right)}\left(A,B\right):=\mu\left(A\cap B\right),\;\forall A,B\in\mathscr{B}_{fin}.\label{eq:a4}
\end{equation}

It is immediate that $K^{\left(\mu\right)}$ is p.d., and there is
therefore a canonical associated centered Gaussian process $X=X^{\left(\mu\right)}$,
indexed by $\mathscr{B}_{fin}$, satisfying 
\begin{equation}
\mathbb{E}(X_{A}^{\left(\mu\right)}X_{B}^{\left(\mu\right)})=\mu\left(A\cap B\right),\;\forall A,B\in\mathscr{B}_{fin}.\label{eq:a5}
\end{equation}
We shall study this process in detail and show that it may be used
to interpolate any Markov process built on $\left(V,\mathscr{B}\right)$;
see \thmref{m1}. 

A tool in our analysis will be reproducing kernel Hilbert spaces (RKHSs).
Recall that every p.d. kernel $K$ has an associated and unique RKHS
$\mathscr{H}\left(K\right)$. The reproducing axiom is as follows:
$K\left(\cdot,s\right)\in\mathscr{H}\left(K\right)$, and 
\begin{equation}
F\left(s\right)=\left\langle F,K\left(\cdot,s\right)\right\rangle _{\mathscr{H}\left(K\right)},\;\forall s\in S,\:\text{for \ensuremath{F\in\mathscr{H}\left(K\right)}}.\label{eq:a6}
\end{equation}

We now present two general lemmas, applied to any p.d. kernel $K:S\times S\longrightarrow\mathbb{C}$.
Let $\mathscr{H}\left(K\right)$ be the corresponding RKHS. 
\begin{lem}
\label{lem:rk1}A function $\psi$ on $S$ is in $\mathscr{H}\left(K\right)$
$\Longleftrightarrow$ $\exists C=C_{\psi}<\infty$, a finite constant,
s.t. for $\forall n\in\mathbb{N}$, $\forall\left\{ s_{i}\right\} _{1}^{n}$,
$s_{i}\in S$, $\forall\left\{ \alpha_{i}\right\} _{1}^{n}$, $\alpha_{i}\in\mathbb{C}$,
we have 
\begin{equation}
\left|\sum\nolimits _{i}\alpha_{i}\psi\left(s_{i}\right)\right|^{2}\leq C_{\psi}\sum\nolimits _{i}\sum\nolimits _{j}\alpha_{i}\overline{\alpha}_{j}K\left(s_{i},s_{j}\right).\label{eq:rk1-1}
\end{equation}
\end{lem}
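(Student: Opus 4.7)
The plan is to prove both implications through the reproducing property \eqref{eq:a6} combined with two standard Hilbert space tools: Cauchy--Schwarz for the forward direction, and the Riesz representation theorem for the converse. The key object is the dense subspace $\mathscr{H}_{0}:=\mathrm{span}\{K(\cdot,s):s\in S\}\subset\mathscr{H}(K)$, on which
$$\Bigl\|\sum_{i}\alpha_{i}K(\cdot,s_{i})\Bigr\|_{\mathscr{H}(K)}^{2}=\sum_{i,j}\alpha_{i}\overline{\alpha}_{j}K(s_{i},s_{j}),$$
so that the right-hand side of \eqref{eq:rk1-1} is (up to a harmless conjugate reindexing, permissible because the quantity is real and non-negative) the squared $\mathscr{H}(K)$-norm of a kernel superposition.

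For $(\Rightarrow)$, given $\psi\in\mathscr{H}(K)$, I would use \eqref{eq:a6} to rewrite $\sum_{i}\alpha_{i}\psi(s_{i})=\langle\psi,\sum_{i}\overline{\alpha}_{i}K(\cdot,s_{i})\rangle_{\mathscr{H}(K)}$ and then invoke Cauchy--Schwarz to read off \eqref{eq:rk1-1} with $C_{\psi}=\|\psi\|_{\mathscr{H}(K)}^{2}$. This direction is essentially formal.

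For $(\Leftarrow)$, I would regard \eqref{eq:rk1-1} as a boundedness estimate for the candidate conjugate-linear functional $L\colon\mathscr{H}_{0}\to\mathbb{C}$ prescribed by $L(K(\cdot,s))=\overline{\psi(s)}$ and extended antilinearly on finite sums. The hypothesis does double duty: if a kernel superposition vanishes in $\mathscr{H}(K)$, then by the norm formula above its image under $L$ also vanishes (well-definedness), and in general $|L(h)|\leq\sqrt{C_{\psi}}\,\|h\|_{\mathscr{H}(K)}$ (boundedness). Extending $L$ by continuity to all of $\mathscr{H}(K)$ and invoking Riesz, I obtain $\varphi\in\mathscr{H}(K)$ representing $L$; testing against $K(\cdot,s)$ and comparing with the reproducing identity $\varphi(s)=\langle\varphi,K(\cdot,s)\rangle_{\mathscr{H}(K)}$ then forces $\psi=\varphi\in\mathscr{H}(K)$, with the matching bound $\|\psi\|_{\mathscr{H}(K)}^{2}\leq C_{\psi}$. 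The only real subtlety is the conjugation bookkeeping needed to line up the complex conjugates in \eqref{eq:rk1-1} with the sesquilinear inner product on $\mathscr{H}(K)$; with that settled, the argument reduces to two routine Hilbert-space invocations glued together by the reproducing axiom.
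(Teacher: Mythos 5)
Your argument is correct and is exactly the standard Aronszajn-theory proof that the paper merely cites without writing out (the paper's proof consists of a reference plus the remark that $C_{\psi}=\left\Vert \psi\right\Vert _{\mathscr{H}\left(K\right)}^{2}$ is the smallest admissible constant, which your Cauchy--Schwarz step also yields). The one detail to settle is the conjugation you yourself flag: with $L\left(K\left(\cdot,s\right)\right)=\overline{\psi\left(s\right)}$ extended \emph{antilinearly}, Riesz produces $\varphi$ with $\varphi\left(s\right)=\overline{\psi\left(s\right)}$, so you should instead extend linearly (or prescribe $L\left(K\left(\cdot,s\right)\right)=\psi\left(s\right)$ antilinearly) in order to land on $\psi$ itself rather than $\overline{\psi}$.
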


\begin{proof}
This is standard Aronszajn theory \cite{MR3526117,MR0051437,MR0088706}.
If (\ref{eq:rk1-1}) holds, then we may take the constant $C_{\psi}=\left\Vert \psi\right\Vert _{\mathscr{H}}^{2}$,
and it is the smallest choice of admissible constant. 
\end{proof}
\begin{lem}[Two kernels]
\label{lem:rk2} Let $K_{1}$ and $K_{2}:$ $S\times S\longrightarrow\mathbb{C}$
both be p.d.; and let $\mathscr{H}\left(K_{i}\right)$, $i=1,2$,
be the corresponding RKHSs. Then the following are equivalent:
\begin{enumerate}
\item $\mathscr{H}\left(K_{1}\right)\subseteq\mathscr{H}\left(K_{2}\right)$;
\item $\exists C<\infty$ such that for $\forall n\in\mathbb{N}$, $\forall\left\{ s_{i}\right\} _{1}^{n}$,
$\left\{ \alpha_{i}\right\} _{1}^{n}$, $s_{i}\in S$, $\alpha_{i}\in\mathbb{C}$,
we have the estimate:
\[
\sum\nolimits _{i}\sum\nolimits _{j}\alpha_{i}\overline{\alpha}_{j}K_{1}\left(s_{i},s_{j}\right)\leq C\sum\nolimits _{i}\sum\nolimits _{j}\alpha_{i}\overline{\alpha}_{j}K_{2}\left(s_{i},s_{j}\right).
\]
Stated equivalently, $CK_{2}-K_{1}$ is positive definite (p.d.). 
\end{enumerate}
\end{lem}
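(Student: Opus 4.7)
The implication $(2)\Rightarrow(1)$ is a direct chain of \lemref{rk1} applications: fix $\psi\in\mathscr{H}(K_{1})$ and apply \lemref{rk1} to $K_{1}$ with optimal constant $\|\psi\|_{\mathscr{H}(K_{1})}^{2}$, then substitute hypothesis (2) on the right-hand side to obtain
\[
\left|\sum\nolimits _{i}\alpha_{i}\psi(s_{i})\right|^{2}\leq C\|\psi\|_{\mathscr{H}(K_{1})}^{2}\sum\nolimits _{i,j}\alpha_{i}\overline{\alpha_{j}}K_{2}(s_{i},s_{j}).
\]
Now \lemref{rk1} read backwards for $K_{2}$ places $\psi$ in $\mathscr{H}(K_{2})$ with $\|\psi\|_{\mathscr{H}(K_{2})}^{2}\leq C\|\psi\|_{\mathscr{H}(K_{1})}^{2}$, so in particular $\mathscr{H}(K_{1})\subseteq\mathscr{H}(K_{2})$.

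The converse $(1)\Rightarrow(2)$ proceeds in two steps. First, since point evaluations are continuous on both $\mathscr{H}(K_{1})$ and $\mathscr{H}(K_{2})$, the set-theoretic inclusion $\iota\colon\mathscr{H}(K_{1})\hookrightarrow\mathscr{H}(K_{2})$ has closed graph, so the closed graph theorem supplies a constant $C<\infty$ with $\|\psi\|_{\mathscr{H}(K_{2})}\leq C^{1/2}\|\psi\|_{\mathscr{H}(K_{1})}$ for every $\psi\in\mathscr{H}(K_{1})$. Second, for an arbitrary finite sample $\{(s_{i},\alpha_{i})\}_{1}^{n}$, I introduce
\[
\psi:=\sum\nolimits _{i}\alpha_{i}K_{1}(\cdot,s_{i})\in\mathscr{H}(K_{1})\subseteq\mathscr{H}(K_{2}),\qquad\phi:=\sum\nolimits _{j}\alpha_{j}K_{2}(\cdot,s_{j})\in\mathscr{H}(K_{2}),
\]
with the standard norm formulas $\|\psi\|_{\mathscr{H}(K_{1})}^{2}=\sum_{i,j}\alpha_{i}\overline{\alpha_{j}}K_{1}(s_{i},s_{j})$ and $\|\phi\|_{\mathscr{H}(K_{2})}^{2}=\sum_{i,j}\alpha_{i}\overline{\alpha_{j}}K_{2}(s_{i},s_{j})$ coming from the reproducing axiom in each space. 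Because of (1), the reproducing axiom of $\mathscr{H}(K_{2})$ is available at $\psi$, and applied at each $s_{j}$ it produces the bridge identity
\[
\|\psi\|_{\mathscr{H}(K_{1})}^{2}=\sum\nolimits _{j}\overline{\alpha_{j}}\psi(s_{j})=\left\langle \psi,\phi\right\rangle _{\mathscr{H}(K_{2})}.
\]
Cauchy-Schwarz combined with the closed-graph estimate gives $\|\psi\|_{\mathscr{H}(K_{1})}^{2}\leq C^{1/2}\|\psi\|_{\mathscr{H}(K_{1})}\|\phi\|_{\mathscr{H}(K_{2})}$, and squaring (with the case $\|\psi\|_{\mathscr{H}(K_{1})}=0$ trivial) is precisely inequality (2). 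The reformulation ``$CK_{2}-K_{1}$ is p.d.'' is then just the definition of positivity applied to that kernel.

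The only non-routine step is the bridge identity in the converse direction, and this is exactly where hypothesis (1) is essential: without $\psi\in\mathscr{H}(K_{2})$ one cannot invoke the $\mathscr{H}(K_{2})$-reproducing property on $\psi$ to re-express $\sum_{j}\overline{\alpha_{j}}\psi(s_{j})$ as an inner product with $\phi$. Everything else is standard Aronszajn bookkeeping combined with the closed graph theorem.
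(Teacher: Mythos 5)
Your proof is correct, and it follows exactly the route the paper indicates: the paper's entire proof is the one-liner ``this follows immediately from (\ref{eq:a6}) and \lemref{rk1},'' and your $(2)\Rightarrow(1)$ direction is precisely that chain of \lemref{rk1} applications, while your $(1)\Rightarrow(2)$ direction supplies the two ingredients the paper leaves implicit \textemdash{} the closed graph theorem (justified by continuity of point evaluations) to bound the inclusion $\mathscr{H}(K_{1})\hookrightarrow\mathscr{H}(K_{2})$, and the bridge identity $\left\Vert \psi\right\Vert _{\mathscr{H}(K_{1})}^{2}=\left\langle \psi,\phi\right\rangle _{\mathscr{H}(K_{2})}$ obtained from the $\mathscr{H}(K_{2})$-reproducing property applied to $\psi=\sum_{i}\alpha_{i}K_{1}(\cdot,s_{i})$. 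Your write-up is in fact more complete than the paper's, and correctly isolates where hypothesis (1) is actually used.
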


\begin{proof}
This follows immediately from (\ref{eq:a6}) and \lemref{rk1}.
\end{proof}

\section{\label{sec:wn}Application to white noise analysis}

White noise analysis serves as a versatile framework for stochastic
and infinite-dimensional analysis, with a growing number of applications
to neighboring areas, probability, mathematical statistics, and quantum
physics. The setting is that of (Gaussian, continuous parameter) white
noise \textemdash{} a generalized random process indexed by elements
in a $\sigma$-algebra and with independent values at disjoint sets;
informally, we may view it as an infinite system of coordinates on
which to base an infinite-dimensional calculus. More precisely, the
starting point is the $L^{2}$-space of a white noise measure (Wiener
measure). A common approach makes use of a certain choice of a Gelfand
triples \cite{MR562914,App09}. Our approach is both entirely different,
and it is more general. The wider aim is an infinite-dimensional differential
calculus, and calculus of variation. 

Let $\left(V,\mathscr{B}\right)$ be a measure space, and let $\mu$
be a $\sigma$-finite measure on $\left(V,\mathscr{B}\right)$; then
define $K=K^{\left(\mu\right)}$ as follows: $K^{\left(\mu\right)}:\mathscr{B}_{fin}\times\mathscr{B}_{fin}\longrightarrow\mathbb{R}$,
\begin{equation}
K^{\left(\mu\right)}\left(A,B\right)=\mu\left(A\cap B\right),\;\forall A,B\in\mathscr{B}_{fin},\label{eq:w1}
\end{equation}
where $\mathscr{B}_{fin}=\left\{ A\in\mathscr{B}\mathrel{;}\mu\left(A\right)<\infty\right\} $. 
\begin{lem}
$K^{\left(\mu\right)}$ as in (\ref{eq:w1}) is positive definite. 
\end{lem}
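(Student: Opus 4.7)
The plan is to exhibit $K^{(\mu)}$ explicitly as a Gram-type kernel by providing a feature space in the sense of \defref{fs}, and then read off positivity from the identity of a quadratic form as a squared $L^2$-norm.

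First I would take $\mathbf{H} = L^{2}\left(V,\mathscr{B},\mu\right)$ and define the feature map
\[
F:\mathscr{B}_{fin}\longrightarrow L^{2}\left(\mu\right),\qquad F\left(A\right):=\mathbbm{1}_{A}.
\]
The fact that $A\in\mathscr{B}_{fin}$ (i.e., $\mu\left(A\right)<\infty$) is exactly what guarantees $\mathbbm{1}_{A}\in L^{2}\left(\mu\right)$, so $F$ is well defined. Next I would verify the reproducing identity: for $A,B\in\mathscr{B}_{fin}$,
\[
\left\langle F\left(A\right),F\left(B\right)\right\rangle _{L^{2}\left(\mu\right)}=\int_{V}\mathbbm{1}_{A}\mathbbm{1}_{B}\,d\mu=\int_{V}\mathbbm{1}_{A\cap B}\,d\mu=\mu\left(A\cap B\right)=K^{\left(\mu\right)}\left(A,B\right).
\]
Hence $\left(F,L^{2}\left(\mu\right)\right)$ is a feature pair for $K^{\left(\mu\right)}$.

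The positive definiteness then falls out for free: for any $n\in\mathbb{N}$, $\left\{ A_{i}\right\} _{1}^{n}\subset\mathscr{B}_{fin}$, and $\left\{ \alpha_{i}\right\} _{1}^{n}\subset\mathbb{R}$,
\[
\sum_{i}\sum_{j}\alpha_{i}\alpha_{j}K^{\left(\mu\right)}\left(A_{i},A_{j}\right)=\sum_{i}\sum_{j}\alpha_{i}\alpha_{j}\left\langle \mathbbm{1}_{A_{i}},\mathbbm{1}_{A_{j}}\right\rangle _{L^{2}\left(\mu\right)}=\left\Vert \sum_{i}\alpha_{i}\mathbbm{1}_{A_{i}}\right\Vert _{L^{2}\left(\mu\right)}^{2}\geq0,
\]
which is condition (\ref{eq:a1}) of the definition.

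There is essentially no obstacle here; the only thing to be careful about is ensuring the feature vectors genuinely lie in the Hilbert space, which is precisely why the kernel is defined on $\mathscr{B}_{fin}\times\mathscr{B}_{fin}$ rather than all of $\mathscr{B}\times\mathscr{B}$. The same computation works verbatim in the complex case with $\alpha_{j}$ replaced by $\overline{\alpha}_{j}$.
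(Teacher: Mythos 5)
Your proof is correct and is essentially the paper's own argument: the decisive identity in both is $\sum_{i}\sum_{j}\alpha_{i}\alpha_{j}\mu\left(A_{i}\cap A_{j}\right)=\int\left|\sum_{i}\alpha_{i}1_{A_{i}}\right|^{2}d\mu\geq0$. Your packaging of this as a feature map $A\mapsto\mathbbm{1}_{A}$ into $L^{2}\left(\mu\right)$ is, if anything, slightly cleaner, since it avoids the paper's intermediate (and logically premature) reference to the norm in $\mathscr{H}(K^{\left(\mu\right)})$.
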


\begin{proof}
We have 
\begin{align*}
\sum\nolimits _{i}\sum\nolimits _{j}\alpha_{i}\alpha_{j}K^{\left(\mu\right)}\left(A_{i},A_{j}\right) & =\left\Vert \sum\nolimits _{i}\alpha_{i}K^{\left(\mu\right)}\left(\cdot,A_{j}\right)\right\Vert _{\mathscr{H}(K^{\left(\mu\right)})}^{2}\\
 & =\int\left|\sum\nolimits _{i}\alpha_{i}1_{A_{i}}\right|^{2}d\mu\geq0,
\end{align*}
for $\forall\left\{ \alpha_{i}\right\} _{1}^{n}$, $\alpha_{i}\in\mathbb{R}$,
$\forall\left\{ A_{i}\right\} _{1}^{n}$, $A_{i}\in\mathscr{B}_{fin}$,
$\forall n\in\mathbb{N}$. 
\end{proof}
\begin{thm}
\label{thm:n1}Let $\left(V,\mathscr{B},\mu\right)$ be a measure
space, $\mu$ assumed $\sigma$-finite (positive). Let $K^{\left(\mu\right)}\left(A,B\right):=\mu\left(A\cap B\right)$,
$A,B\in\mathscr{B}_{fin}$, be the corresponding p.d. kernel; and
let $\mathscr{H}(K^{\left(\mu\right)})$ be the RKHS. Then

\begin{align}
\mathscr{H}(K^{\left(\mu\right)}) & =\big\{ F\text{ signed measures on }\left(V,\mathscr{B}\right)\;s.t.\;\label{eq:n1-1}\\
 & \qquad dF\ll d\mu\left(\text{abs. cont}\right)\:\text{with }\frac{dF}{d\mu}\in L^{2}\left(\mu\right)\big\};\;\text{and}\nonumber \\
\left\Vert F\right\Vert _{\mathscr{H}(K^{\left(\mu\right)})} & =\left\Vert \frac{dF}{d\mu}\right\Vert _{L^{2}\left(\mu\right)}.\label{eq:n1-2}
\end{align}

\end{thm}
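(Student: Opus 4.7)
The proof splits into two halves: showing every $F$ of the stated form lies in $\mathscr{H}(K^{(\mu)})$ with the right norm bound, and showing every element of $\mathscr{H}(K^{(\mu)})$ has the stated form. The unifying idea is to build an isometric identification $T:\mathscr{H}(K^{(\mu)}) \longrightarrow L^{2}(\mu)$ sending the reproducing kernel sections $K^{(\mu)}(\cdot,A)$ to the indicator functions $\mathbf{1}_{A}$, and then transport the question across $T$.

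\textbf{Step 1 (Inclusion $\supseteq$ via \lemref{rk1}).} Given a signed measure $F$ with $dF/d\mu = f \in L^{2}(\mu)$, I will estimate, for any finite collection $\{A_{i}\}_{1}^{n}\subset \mathscr{B}_{fin}$ and scalars $\{\alpha_{i}\}$,
\[
\Bigl|\sum_{i}\alpha_{i}F(A_{i})\Bigr|^{2} = \Bigl|\int f \cdot \sum_{i}\alpha_{i}\mathbf{1}_{A_{i}} \, d\mu\Bigr|^{2} \leq \|f\|_{L^{2}(\mu)}^{2} \int\Bigl|\sum_{i}\alpha_{i}\mathbf{1}_{A_{i}}\Bigr|^{2} d\mu
\]
by Cauchy--Schwarz, and the right-hand integral equals $\sum_{i,j}\alpha_{i}\alpha_{j}K^{(\mu)}(A_{i},A_{j})$. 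By \lemref{rk1}, this shows $F \in \mathscr{H}(K^{(\mu)})$ with $\|F\|_{\mathscr{H}} \leq \|f\|_{L^{2}(\mu)}$.

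\textbf{Step 2 (Construction of the isometry $T$).} Consider the dense subspace $\mathscr{D} = \mathrm{span}\{K^{(\mu)}(\cdot, A) : A \in \mathscr{B}_{fin}\} \subset \mathscr{H}(K^{(\mu)})$. Define $T\bigl(\sum \alpha_{i}K^{(\mu)}(\cdot,A_{i})\bigr) := \sum \alpha_{i}\mathbf{1}_{A_{i}} \in L^{2}(\mu)$. Using (\ref{eq:w1}),
\[
\Bigl\|\sum_{i}\alpha_{i}K^{(\mu)}(\cdot,A_{i})\Bigr\|_{\mathscr{H}}^{2} = \sum_{i,j}\alpha_{i}\alpha_{j}\mu(A_{i}\cap A_{j}) = \Bigl\|\sum_{i}\alpha_{i}\mathbf{1}_{A_{i}}\Bigr\|_{L^{2}(\mu)}^{2},
\]
so $T$ is a well-defined linear isometry on $\mathscr{D}$. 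Extend by continuity to an isometry $T:\mathscr{H}(K^{(\mu)})\to L^{2}(\mu)$. Since $\mu$ is $\sigma$-finite, simple $\mathscr{B}_{fin}$-measurable functions are dense in $L^{2}(\mu)$, so $T$ is surjective, hence unitary.

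\textbf{Step 3 (Inclusion $\subseteq$ via the reproducing property).} Let $F\in \mathscr{H}(K^{(\mu)})$ and set $f := T(F)\in L^{2}(\mu)$. For each $A\in\mathscr{B}_{fin}$, the reproducing identity (\ref{eq:a6}) together with the isometry gives
\[
F(A) = \bigl\langle F, K^{(\mu)}(\cdot,A)\bigr\rangle_{\mathscr{H}} = \bigl\langle T(F), T(K^{(\mu)}(\cdot,A))\bigr\rangle_{L^{2}(\mu)} = \int_{A} f\, d\mu.
\]
This identifies $F$ as the set function $A\mapsto \int_{A} f\, d\mu$, which is countably additive on $\mathscr{B}_{fin}$, absolutely continuous with respect to $\mu$, and has Radon--Nikodym derivative $f \in L^{2}(\mu)$. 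The norm identity (\ref{eq:n1-2}) is the isometry property $\|F\|_{\mathscr{H}} = \|T(F)\|_{L^{2}(\mu)}$.

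\textbf{Anticipated difficulty.} The analytic content is already contained in \lemref{rk1} and Cauchy--Schwarz, so there is no real obstacle beyond bookkeeping. The one point to handle with some care is the surjectivity of $T$ onto all of $L^{2}(\mu)$: because the kernel is only defined on $\mathscr{B}_{fin}\times\mathscr{B}_{fin}$, one must invoke $\sigma$-finiteness to ensure that indicator functions of finite-measure sets generate a dense subspace of $L^{2}(\mu)$, which in turn justifies identifying $\mathscr{H}(K^{(\mu)})$ with $L^{2}(\mu)$ rather than a proper subspace.
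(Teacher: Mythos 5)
Your proof is correct, and its second half takes a genuinely different route from the paper's. Step 1 coincides exactly with the paper's argument for the inclusion $\supseteq$: the Cauchy--Schwarz estimate feeding into \lemref{rk1}. For the reverse inclusion, however, the paper works directly with the reproducing property: it first proves that every $F\in\mathscr{H}(K^{\left(\mu\right)})$ is $\sigma$-additive (using orthogonality of $K^{\left(\mu\right)}\left(\cdot,A_{i}\right)$ for disjoint sets), then computes $dK^{\left(\mu\right)}\left(\cdot,A\right)/d\mu=1_{A}$, and finally shows $dF\ll d\mu$ by checking that $\mu\left(A\right)=0$ forces $F\left(A\right)=0$; it leaves the membership $dF/d\mu\in L^{2}\left(\mu\right)$ and the norm equality (\ref{eq:n1-2}) somewhat implicit. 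You instead construct the unitary $T$ with $T(K^{\left(\mu\right)}\left(\cdot,A\right))=\mathbf{1}_{A}$ up front and read off $F\left(A\right)=\int_{A}f\,d\mu$ with $f=T(F)$ in one stroke, which simultaneously delivers $\sigma$-additivity, absolute continuity, the $L^{2}$-membership of the density, and the isometry (\ref{eq:n1-2}). In effect you prove the paper's subsequent Corollary (that $W^{\left(\mu\right)}:f\mapsto f\,d\mu$ is an isometric isomorphism of $L^{2}\left(\mu\right)$ onto $\mathscr{H}(K^{\left(\mu\right)})$) first and deduce the Theorem from it, whereas the paper goes the other way; your order of argument is tighter and actually closes the gap the paper papers over with ``It is in fact `='. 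See below.'' One cosmetic caveat, shared with the paper: for $f\in L^{2}\left(\mu\right)$ and $\mu$ only $\sigma$-finite, the set function $A\mapsto\int_{A}f\,d\mu$ need not be a finite signed measure on all of $\mathscr{B}$, so ``signed measure on $\left(V,\mathscr{B}\right)$'' in (\ref{eq:n1-1}) should be read as a $\sigma$-additive set function on $\mathscr{B}_{fin}$; this is an imprecision in the statement, not in your argument.
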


\begin{proof}
We may use \lemref{rk1} to show that $F$, as in (\ref{eq:n1-1}),
is indeed in $\mathscr{H}(K^{\left(\mu\right)})$. 

Assume $F$ is as specified in (\ref{eq:n1-1}); and set $\varphi=\frac{dF}{d\mu}$
($\in L^{2}\left(\mu\right)$), which is the condition from (\ref{eq:n1-1})
on the Radon-Nikodym derivative. We will show that, if $n\in\mathbb{N}$,
$\left(A_{i}\right)_{1}^{n}$, $A_{i}\in\mathscr{B}_{fin}$ (i.e.,
$\mu\left(A_{i}\right)<\infty$), $\alpha_{i}\in\mathbb{R}$, then
\begin{equation}
\left|\sum\nolimits _{i}\alpha_{i}F\left(A_{i}\right)\right|^{2}\leq\left\Vert \varphi\right\Vert _{L^{2}\left(\mu\right)}^{2}\sum\nolimits _{i}\sum\nolimits _{j}\alpha_{i}\overline{\alpha}_{j}\underset{=\mu\left(A_{i}\cap A_{j}\right)}{\underbrace{K^{\left(\mu\right)}\left(A_{i},A_{j}\right)}}\label{eq:n1-3}
\end{equation}
and so we conclude that $F\in\mathscr{H}(K^{\left(\mu\right)})$,
with $\left\Vert F\right\Vert _{\mathscr{H}(K^{\left(\mu\right)})}\leq\left\Vert \varphi\right\Vert _{L^{2}\left(\mu\right)}$.
It is in fact ``$=$''. See below. 

We now give the verification of (\ref{eq:n1-3}): Let $n$, $\left(A_{i}\right)_{1}^{n}$,
$\left(\alpha_{i}\right)_{1}^{n}$, and $\varphi:=\frac{dF}{d\mu}$
$\in L^{2}\left(\mu\right)$ be as stated in (\ref{eq:n1-1}), and
the discussion above; then 
\begin{eqnarray*}
\text{LHS}_{\left(\ref{eq:n1-3}\right)} & = & \left|\sum\nolimits _{i}\alpha_{i}F\left(A_{i}\right)\right|^{2}=\left|\sum\nolimits _{i}\alpha_{i}\int_{A_{i}}\varphi d\mu\right|^{2}\\
 & = & \left|\int_{V}\varphi\cdot\sum\nolimits _{i}\alpha_{i}1_{A_{i}}d\mu\right|^{2}\\
 & \underset{\left(\text{Schwarz}\right)}{\leq} & \left\Vert \varphi\right\Vert _{L^{2}\left(\mu\right)}^{2}\left\Vert \sum\nolimits _{i}\alpha_{i}1_{A_{i}}\right\Vert _{L^{2}\left(\mu\right)}^{2}\\
 & = & \left\Vert \varphi\right\Vert _{L^{2}\left(\mu\right)}^{2}\sum\nolimits _{i}\sum\nolimits _{j}\alpha_{i}\alpha_{j}\mu\left(A_{i}\cap A_{j}\right)
\end{eqnarray*}
which is the desired conclusion (\ref{eq:n1-3}).
\begin{claim*}
Every $F\in\mathscr{H}(K^{\left(\mu\right)})$ is a $\sigma$-additive
signed measure, i.e., if $A=\cup_{i=1}^{\infty}A_{i}$, $A_{i}\cap A_{j}=\emptyset$,
$i\neq j$; (sets in $\mathscr{B}_{fin}$) then 
\begin{equation}
F\left(A\right)=\sum\nolimits _{i=1}^{\infty}F\left(A_{i}\right).\label{eq:n1-4}
\end{equation}
\end{claim*}
\begin{flushleft}
Proof of (\ref{eq:n1-4}). 
\begin{alignat*}{2}
\text{LHS}_{\left(\ref{eq:n1-4}\right)} & =F\left(A\right)=\left\langle F,\mu\left(\cdot\cap A\right)\right\rangle _{\mathscr{H}(K^{\left(\mu\right)})}, & \quad & \mu\left(\cdot\cap A\right)=K^{\left(\mu\right)}\left(\cdot,A\right),\\
 & =\left\langle F,\sum\nolimits _{i=1}^{\infty}\mu\left(\cdot\cap A_{i}\right)\right\rangle _{\mathscr{H}(K^{\left(\mu\right)})}, &  & \mu\left(\cdot\cap A_{i}\right)=K^{\left(\mu\right)}\left(\cdot,A_{i}\right),\\
 & =\sum\nolimits _{i=1}^{\infty}\left\langle F,K^{\left(\mu\right)}\left(\cdot,A_{i}\right)\right\rangle _{\mathscr{H}(K^{\left(\mu\right)})}\\
 & =\sum\nolimits _{i=1}^{\infty}F\left(A_{i}\right).
\end{alignat*}
We used $K^{\left(\mu\right)}\left(\cdot,A_{i}\right)\perp K^{\left(\mu\right)}\left(\cdot,A_{j}\right)$
for $i\neq j$. 
\par\end{flushleft}
\begin{claim*}
For $A,B\in\mathscr{B}_{fin}$, we have 
\begin{equation}
\frac{dK^{\left(\mu\right)}\left(\cdot,A\right)}{d\mu}=1_{A}=\text{the indicator function.}\label{eq:n1-5}
\end{equation}
\end{claim*}
\begin{flushleft}
Proof. For $A,B\in\mathscr{B}_{fin}$, we have 
\[
K^{\left(\mu\right)}\left(A,B\right)=\int_{B}1_{A}\left(x\right)d\mu\left(x\right)=\mu\left(B\cap A\right),
\]
and (\ref{eq:n1-5}) follows.
\par\end{flushleft}
\begin{claim*}
If $F\in\mathscr{H}(K^{\left(\mu\right)})$, then $dF\ll d\mu$ where
$dF$ is the signed measure in (\ref{eq:n1-4}).
\end{claim*}
\begin{flushleft}
Proof. We show that $\left[\mu\left(A\right)=0\right]$ $\Longrightarrow$
$\left[F\left(A\right)=0\right]$. From the reproducing property in
$\mathscr{H}(K^{\left(\mu\right)})$, we have:
\[
F\left(A\right)=\left\langle F,K^{\left(\mu\right)}\left(\cdot,A\right)\right\rangle _{\mathscr{H}(K^{\left(\mu\right)})}=\left\langle F,\mu\left(\cdot\cap A\right)\right\rangle _{\mathscr{H}(K^{\left(\mu\right)})};
\]
hence, $\mu\left(A\right)=0$ $\Longrightarrow$ $F\left(A\right)=0$,
since $\mu\left(A\right)=0$ $\Longrightarrow$ $K^{\left(\mu\right)}\left(\cdot,A\right)=0$,
and so $F\left(A\right)=\left\langle F,K^{\left(\mu\right)}\left(\cdot,A\right)\right\rangle _{\mathscr{H}(K^{\left(\mu\right)})}=0$. 
\par\end{flushleft}

The proof of \thmref{n1} is complete. 

\end{proof}
\begin{cor}
Let $\left(V,\mathscr{B},\mu\right)$ be a fixed $\sigma$-finite
measure space, and let $\mathscr{H}(K^{\left(\mu\right)})$ be the
RKHS of $K^{\left(\mu\right)}\left(A,B\right):=\mu\left(A\cap B\right)$,
$A,B\in\mathscr{B}_{fin}$ (see (\ref{eq:w1})). Define $W^{\left(\mu\right)}$
as an isometry: $W^{\left(\mu\right)}:L^{2}\left(\mu\right)\ni f\longmapsto fd\mu\in\mathscr{H}(K^{\left(\mu\right)})$,
where $W^{\left(\mu\right)}\left(f\right)=fd\mu$ is a signed measure
on $\left(V,\mathscr{B}\right)$; then 
\[
W^{\left(\mu\right)}:L^{2}\left(\mu\right)\simeq\mathscr{H}(K^{\left(\mu\right)})
\]
is an isometric isomorphism onto $\mathscr{H}(K^{\left(\mu\right)})$. 
\end{cor}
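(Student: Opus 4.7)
The plan is to observe that this corollary is essentially a direct repackaging of \thmref{n1}. That theorem identifies elements of $\mathscr{H}(K^{(\mu)})$ as exactly those signed measures $F$ on $(V,\mathscr{B})$ with $F\ll\mu$ and $dF/d\mu\in L^{2}(\mu)$, and asserts the norm identity $\left\Vert F\right\Vert _{\mathscr{H}(K^{(\mu)})}=\left\Vert dF/d\mu\right\Vert _{L^{2}(\mu)}$. Thus the Radon-Nikodym derivative $F\mapsto dF/d\mu$ is a natural map $\mathscr{H}(K^{(\mu)})\to L^{2}(\mu)$, and $W^{(\mu)}$ is manifestly the candidate inverse. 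The verification then reduces to three bullets: well-definedness, isometry, and surjectivity; linearity is automatic from linearity of the integral.

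For well-definedness and the isometric property: given $f\in L^{2}(\mu)$, I would first check that $W^{(\mu)}(f)$, defined on $\mathscr{B}_{fin}$ by $A\mapsto\int_{A}f\,d\mu$, is a finite, $\sigma$-additive signed measure on $(V,\mathscr{B})$ (finiteness on $\mathscr{B}_{fin}$ follows from Cauchy-Schwarz, $\sigma$-additivity from dominated convergence), is absolutely continuous with respect to $\mu$, and has $d(W^{(\mu)}(f))/d\mu=f$ $\mu$-a.e. An application of \thmref{n1} then yields both $W^{(\mu)}(f)\in\mathscr{H}(K^{(\mu)})$ and
\[
\left\Vert W^{(\mu)}(f)\right\Vert _{\mathscr{H}(K^{(\mu)})}=\left\Vert f\right\Vert _{L^{2}(\mu)}.
\]

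For surjectivity: given any $F\in\mathscr{H}(K^{(\mu)})$, \thmref{n1} provides $\varphi:=dF/d\mu\in L^{2}(\mu)$ satisfying $F(A)=\int_{A}\varphi\,d\mu$ for every $A\in\mathscr{B}_{fin}$; that is, $F=W^{(\mu)}(\varphi)$. Combined with the isometry (which forces injectivity), this shows $W^{(\mu)}$ is a bijective linear isometry, hence an isometric isomorphism.

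The main---indeed the only---obstacle has already been resolved in the proof of \thmref{n1}, where the RKHS was concretely identified with a space of signed measures and the norm pinned down through the Radon-Nikodym derivative. Once that theorem is available, the corollary is essentially formal; the only minor technicalities to dispatch are the measure-theoretic ones above (finite total variation on $\mathscr{B}_{fin}$ and $\sigma$-additivity of $fd\mu$), which exploit $\sigma$-finiteness of $\mu$ together with $f\in L^{2}(\mu)$.
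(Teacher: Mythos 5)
Your proposal is correct and matches the paper's intent: the paper states this corollary with no proof at all, treating it as an immediate consequence of \thmref{n1}, and your write-up simply spells out that deduction (well-definedness and the isometry from the norm identity \eqref{eq:n1-2}, surjectivity from the characterization \eqref{eq:n1-1}). The minor measure-theoretic checks you flag (finiteness on $\mathscr{B}_{fin}$ and $\sigma$-additivity of $f\,d\mu$) are handled correctly and are consistent with the claims already established inside the paper's proof of \thmref{n1}.
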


\begin{thm}
\label{thm:n2}Let $\left(V,\mathscr{B}\right)$ be a measure space,
$\mu$ a $\sigma$-finite measure on $\mathscr{B}$ , and set $\mathscr{B}_{fin}=\left\{ A\in\mathscr{B}\mathrel{;}\mu\left(A\right)<\infty\right\} $.
Let $K$ be a p.d. kernel on $\mathscr{B}_{fin}\times\mathscr{B}_{fin}$,
and let $\mathscr{H}\left(K\right)$ be the corresponding RKHS. Suppose
$\mathscr{H}\left(K\right)$ consists of signed measures; and set
\begin{equation}
\mathscr{H}\left(\mu\right):=\left\{ F\in\mathscr{H}\left(K\right)\mathrel{;}dF\ll d\mu,\;\text{and }\frac{dF}{d\mu}\in L^{2}\left(\mu\right)\right\} .\label{eq:n2-1}
\end{equation}
Then 
\begin{equation}
\mathscr{H}\left(\mu\right)\subseteq\mathscr{H}(K^{\left(\mu\right)}),\label{eq:n2-2}
\end{equation}
where 
\begin{equation}
K^{\left(\mu\right)}\left(A,B\right):=\mu\left(A\cap B\right),\;\forall A,B\in\mathscr{B}_{fin};\label{eq:n2-3}
\end{equation}
and therefore $\exists c\left(\mu\right)<\infty$ such that 
\begin{equation}
c\left(\mu\right)K^{\left(\mu\right)}-K\label{eq:n2-4}
\end{equation}
is positive definite. 
\end{thm}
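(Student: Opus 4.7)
My plan is to split the theorem into two logically distinct assertions and handle them in sequence, using \thmref{n1} for the set inclusion \eqref{eq:n2-2} and \lemref{rk2} (together with a closed graph argument) for the quantitative kernel comparison \eqref{eq:n2-4}.

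First I would dispose of \eqref{eq:n2-2} by unfolding definitions. By construction \eqref{eq:n2-1}, any $F\in\mathscr{H}\left(\mu\right)$ is a signed measure on $\left(V,\mathscr{B}\right)$ with $dF\ll d\mu$ and $dF/d\mu\in L^{2}\left(\mu\right)$. But this is exactly the intrinsic description of $\mathscr{H}(K^{\left(\mu\right)})$ given by \eqref{eq:n1-1} in \thmref{n1}, so $F\in\mathscr{H}(K^{\left(\mu\right)})$; moreover the norm identity \eqref{eq:n1-2} gives $\left\Vert F\right\Vert _{\mathscr{H}(K^{\left(\mu\right)})}=\left\Vert dF/d\mu\right\Vert _{L^{2}\left(\mu\right)}$. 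This step is essentially bookkeeping against \thmref{n1}.

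Next, for the kernel inequality \eqref{eq:n2-4}, the idea is to invoke \lemref{rk2}, which reduces the existence of $c\left(\mu\right)$ with $c\left(\mu\right)K^{\left(\mu\right)}-K$ p.d.\ to the Hilbert-space inclusion $\mathscr{H}\left(K\right)\subseteq\mathscr{H}(K^{\left(\mu\right)})$. The standing hypothesis that $\mathscr{H}\left(K\right)$ consists of signed measures, combined with Step~1 applied to $\mathscr{H}\left(K\right)=\mathscr{H}\left(\mu\right)$, should deliver the set-theoretic inclusion. Continuity then comes for free from the closed graph theorem: if $F_{n}\to F$ in $\mathscr{H}\left(K\right)$ and $F_{n}\to G$ in $\mathscr{H}(K^{\left(\mu\right)})$, then for each $A\in\mathscr{B}_{fin}$ the point evaluations $F_{n}\left(A\right)=\langle F_{n},K\left(\cdot,A\right)\rangle_{\mathscr{H}\left(K\right)}$ and $F_{n}\left(A\right)=\langle F_{n},K^{\left(\mu\right)}\left(\cdot,A\right)\rangle_{\mathscr{H}(K^{\left(\mu\right)})}$ force $F\left(A\right)=G\left(A\right)$, so the graph is closed. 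Reading off the operator norm of the inclusion as $c\left(\mu\right)$ and feeding back into \lemref{rk2} yields \eqref{eq:n2-4}.

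The main obstacle I foresee is the identity $\mathscr{H}\left(K\right)=\mathscr{H}\left(\mu\right)$ used in the last paragraph, i.e., that every signed measure in $\mathscr{H}\left(K\right)$ is automatically absolutely continuous with respect to $\mu$ and has its Radon--Nikodym derivative in $L^{2}\left(\mu\right)$. Absolute continuity alone looks within reach by applying the reproducing identity $F\left(A\right)=\langle F,K\left(\cdot,A\right)\rangle_{\mathscr{H}\left(K\right)}$ to sets of shrinking $\mu$-measure, once one verifies that $K\left(\cdot,A\right)$ also shrinks to zero in $\mathscr{H}\left(K\right)$-norm under such shrinking. The harder half is the $L^{2}$ bound on $dF/d\mu$; my attempt would be to approximate $F$ in $\mathscr{H}\left(K\right)$-norm by finite sums $\sum_{i}\alpha_{i}K\left(\cdot,A_{i}\right)$, bound the $L^{2}\left(\mu\right)$-norms of the resulting (explicitly computable) Radon--Nikodym densities uniformly by invoking \lemref{rk1} applied to the functional $\psi=F$, and then pass to the limit. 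This is where I expect the real technical content of the theorem to be concentrated, and also where the $\sigma$-finiteness hypothesis on $\mu$ should enter to justify the approximation.
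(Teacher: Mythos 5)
Your treatment of the inclusion \eqref{eq:n2-2} is correct, and it is a legitimate shortcut relative to the paper: the paper re-runs the Cauchy--Schwarz estimate $\left|\sum_{i}\alpha_{i}F\left(A_{i}\right)\right|^{2}\leq\Vert dF/d\mu\Vert_{L^{2}\left(\mu\right)}^{2}\sum_{i}\sum_{j}\alpha_{i}\alpha_{j}\mu\left(A_{i}\cap A_{j}\right)$ and invokes \lemref{rk1} --- which is exactly the first half of the proof of \thmref{n1} --- whereas you simply cite the characterization \eqref{eq:n1-1} of $\mathscr{H}(K^{\left(\mu\right)})$ already established there. Either way this part is sound.

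The gap is in your second step, and you have in fact put your finger on a gap that the paper itself does not close. To obtain \eqref{eq:n2-4} from \lemref{rk2} one needs the inclusion of the \emph{whole} space, $\mathscr{H}\left(K\right)\subseteq\mathscr{H}(K^{\left(\mu\right)})$, whereas what has been proved is only $\mathscr{H}\left(\mu\right)\subseteq\mathscr{H}(K^{\left(\mu\right)})$. Your proposed bridge, the identity $\mathscr{H}\left(K\right)=\mathscr{H}\left(\mu\right)$, cannot be derived from the stated hypotheses: the only assumption is that the elements of $\mathscr{H}\left(K\right)$ are signed measures, with no control relating them to $\mu$. A concrete obstruction: take $V=\left[0,1\right]$, $\mu=$ Lebesgue measure, and $K\left(A,B\right)=\delta_{1/2}\left(A\cap B\right)$; then $\mathscr{H}\left(K\right)$ consists of the scalar multiples of the Dirac measure $\delta_{1/2}$ (signed measures), $\mathscr{H}\left(\mu\right)=\left\{ 0\right\} $, and no finite $c$ makes $c\,\mu\left(A\cap B\right)-\delta_{1/2}\left(A\cap B\right)$ positive definite (test on $A=\left[1/2-1/n,1/2+1/n\right]$). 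So the absolute continuity and $L^{2}$-density statements you were trying to prove in your last paragraph are genuinely false at this level of generality, and no approximation by sums $\sum_{i}\alpha_{i}K\left(\cdot,A_{i}\right)$ will rescue them. The conclusion \eqref{eq:n2-4} holds exactly when $\mathscr{H}\left(K\right)=\mathscr{H}\left(\mu\right)$ is available as an additional hypothesis (or follows from extra structure on $K$); with that hypothesis your closed-graph argument is correct but redundant, since \lemref{rk2} already converts the set inclusion into the kernel domination. The paper's own proof asserts that \eqref{eq:n2-4} is ``immediate from \lemref{rk2}'' without supplying the missing inclusion, so your instinct that the real technical content is concentrated here was right --- it is just that this content is an extra assumption rather than a provable lemma.
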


\begin{proof}
Let $F\in\mathscr{H}\left(\mu\right)$, see (\ref{eq:n2-1}); and
set $\varphi^{\left(F\right)}:=\frac{dF}{d\mu}$. Let $n\in\mathbb{N}$,
$\left\{ A_{i}\right\} _{1}^{n}$, $A_{i}\in\mathscr{B}_{fin}$, $\left\{ \alpha_{i}\right\} _{1}^{n}$,
$\alpha_{i}\in\mathbb{R}$; then 
\begin{eqnarray*}
\left|\sum\nolimits _{i}\alpha_{i}F\left(A_{i}\right)\right|^{2} & = & \left|\sum\nolimits _{i}\alpha_{i}\int_{A_{i}}\varphi^{\left(F\right)}\left(x\right)d\mu\left(x\right)\right|^{2}\\
 & = & \left|\int_{V}\left(\sum\nolimits _{i}\alpha_{i}1_{A_{i}}\right)\varphi^{\left(F\right)}d\mu\right|^{2}\\
 & \underset{\left(\text{Schwarz}\right)}{\leq} & \sum\nolimits _{i}\sum\nolimits _{i}\alpha_{i}\alpha_{j}\mu\left(A_{i}\cap A_{j}\right)\Vert\varphi^{\left(F\right)}\Vert_{L^{2}\left(\mu\right)}^{2}.
\end{eqnarray*}
Hence by \lemref{rk1}, $F\in\mathscr{H}(K^{\left(\mu\right)})$,
see (\ref{eq:n2-3}); and $\left\Vert F\right\Vert _{\mathscr{H}(K^{\left(\mu\right)})}\leq\Vert\varphi_{L^{2}\left(\mu\right)}^{\left(F\right)}\Vert$. 

Conclusion (\ref{eq:n2-2}) now follows. Finally conclusion (\ref{eq:n2-4})
is immediate from \lemref{rk2}.
\end{proof}
\begin{example}
\label{exa:bm1}If $V=[0,\infty)$, $\mathscr{B}=$ Borel $\sigma$-algebra,
$\mu=dx=\lambda$, Lebesgue measure, then $X^{\left(\mu\right)}=$
standard Brownian motion. 
\end{example}

\begin{proof}
Let $A=\left[0,t\right]$, $B=\left[0,s\right]$, $s,t\in[0,\infty)$,
then $X_{A}^{\left(\mu\right)}=W_{\left[0,t\right]}$, $X_{B}^{\left(\mu\right)}=W_{\left[0,s\right]}$
satisfying 
\begin{equation}
E(X_{A}^{\left(\mu\right)}X_{B}^{\left(\mu\right)})=\mathbb{E}\left(W_{\left[0,t\right]}W_{\left[0,s\right]}\right)=\lambda\left(\left[0,t\right]\cap\left[0,s\right]\right)=s\wedge t,\label{eq:bm-1}
\end{equation}
so the standard p.d. kernel which determines Brownian motion; and
$d\left[W_{0,t}\right]_{2}=dt$, and $\left[W_{0,t}\right]_{2}=t$,
referring to the quadratic variation, see also \corref{qa} and \lemref{qv}. 
\end{proof}
Recall that in general, if $K$ is a p.d. kernel, $\mathscr{H}\left(K\right)$
the RKHS, then whenever $F:S\longrightarrow\mathscr{H}$ is a function
from $S$ into a Hilbert space $\mathscr{H}$ s.t. $K\left(t,s\right)=\left\langle F\left(s\right),F\left(t\right)\right\rangle _{\mathscr{H}}$,
there is then a corresponding transform $L=L_{F}:\mathscr{H}\longrightarrow\mathscr{H}\left(K\right)$,
given by 
\[
\left(Lh\right)\left(t\right)=\left\langle h,F\left(t\right)\right\rangle _{\mathscr{H}},\;\forall t\in S,\;\forall h\in\mathscr{H}.
\]
In \exaref{bm1}, we may apply this to this to the kernel $K=K^{\left(\mu\right)}$,
$S=\mathscr{B}_{fin}$, $K^{\left(\mu\right)}\left(A,B\right):=\mu\left(A\cap B\right)$,
$A,B\in\mathscr{B}_{fin}$, and let $F\left(A\right):=X_{A}^{\left(\mu\right)}$,
$A\in\mathscr{B}_{fin}$. 
\begin{cor}[An explicit transform]
\label{cor:et} Let $\left(V,\mathscr{B},\mu\right)$ be as in \exaref{bm1}.
Let $\Omega:=\mathbb{R}^{\mathscr{B}_{fin}}$, and set $F:\mathscr{B}_{fin}\longrightarrow L^{2}\left(\Omega,\mathscr{C},\mathbb{P}\right)$,
\[
F\left(A\right):=X_{A}^{\left(\mu\right)},\;A\in\mathscr{B}_{fin},
\]
where $X_{A}^{\left(\mu\right)}\in L^{2}\left(\Omega,\mathbb{P}\right)$
is the centered Gaussian process with covariance kernel $\mathbb{E}(X_{A}^{\left(\mu\right)}X_{B}^{\left(\mu\right)})=\mu\left(A\cap B\right)$. 

Then the transform $L:L^{2}\left(\Omega,\mathbb{P}\right)\longrightarrow\mathscr{H}(K^{\left(\mu\right)})$
is 
\[
\left(Lh\right)\left(A\right)=\mathbb{E}(hX_{A}^{\left(\mu\right)}),\;\forall A\in\mathscr{B}_{fin},\;\forall h\in L^{2}\left(\Omega,\mathbb{P}\right).
\]

Let $\mathscr{F}_{V}:=$ all measurable functions in $\left(V,\mathscr{B}\right)$,
and $f\in L^{2}\left(\mu\right)\subset\mathscr{F}_{V}$ (real valued),
we get the Ito-integral 
\begin{equation}
\int_{V}fdX:=\lim\sum_{i}f\left(s_{i}\right)X_{A_{i}}^{\left(\mu\right)},
\end{equation}
where the limit is taken over all measurable partitions of $V$, mesh
$\rightarrow0$. Then 
\begin{equation}
\mathbb{E}\left(\left|\int_{V}fdX\right|^{2}\right)=\int\left|f\right|^{2}d\mu.\label{eq:bm-2}
\end{equation}
\end{cor}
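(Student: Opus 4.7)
The plan is to treat the two conclusions of the corollary separately. For the transform formula I would simply specialize the general construction of $L=L_{F}$ stated in the paragraph immediately preceding the corollary. Taking $\mathscr{H}=L^{2}(\Omega,\mathbb{P})$, $S=\mathscr{B}_{fin}$, and $F(A)=X_{A}^{(\mu)}$, the covariance identity (\ref{eq:a5}) yields $\langle F(A),F(B)\rangle_{L^{2}(\Omega,\mathbb{P})}=\mathbb{E}(X_{A}^{(\mu)}X_{B}^{(\mu)})=\mu(A\cap B)=K^{(\mu)}(A,B)$, so $(F,L^{2}(\Omega,\mathbb{P}))$ is a feature pair for $K^{(\mu)}$ in the sense of \defref{fs}. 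The stated formula $(Lh)(A)=\mathbb{E}(hX_{A}^{(\mu)})$ is then nothing other than $(Lh)(A)=\langle h,F(A)\rangle_{L^{2}(\Omega,\mathbb{P})}$ rewritten in probabilistic notation.

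For the Ito isometry (\ref{eq:bm-2}) I would proceed by the standard step-function route. First assume $f=\sum_{i=1}^{N}c_{i}1_{A_{i}}$ is simple with pairwise disjoint $A_{i}\in\mathscr{B}_{fin}$ and set $I(f):=\sum_{i=1}^{N}c_{i}X_{A_{i}}^{(\mu)}$. Orthogonality, immediate from (\ref{eq:a5}) applied to disjoint sets, gives
\[
\mathbb{E}\bigl(|I(f)|^{2}\bigr)=\sum\nolimits _{i}c_{i}^{2}\mu(A_{i})=\int_{V}|f|^{2}d\mu,
\]
which is exactly (\ref{eq:bm-2}) on simple functions. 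Density of simple functions in $L^{2}(\mu)$ then permits a unique isometric extension $I\colon L^{2}(\mu)\to L^{2}(\Omega,\mathbb{P})$, with (\ref{eq:bm-2}) preserved under $L^{2}$-limits.

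To identify this extension with the Riemann-type limit appearing in the statement, note that for a measurable partition $\{A_{i}\}$ of a finite-measure set with sample points $s_{i}\in A_{i}$, the finite sum $\sum_{i}f(s_{i})X_{A_{i}}^{(\mu)}$ equals $I(f_{n})$ with $f_{n}:=\sum_{i}f(s_{i})1_{A_{i}}$. In the Lebesgue setting of \exaref{bm1} one selects a refining sequence of partitions along which $f_{n}\to f$ in $L^{2}(\mu)$; the isometry then forces $I(f_{n})\to I(f)$ in $L^{2}(\Omega,\mathbb{P})$, so the displayed limit $\int_{V}fdX$ exists and coincides with $I(f)$, and (\ref{eq:bm-2}) follows.

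The substantive step is the isometry on simple functions; everything else is extension by density. The only real care needed is in the final identification step, where partitions must refine enough to guarantee $f_{n}\to f$ in $L^{2}(\mu)$ \textemdash{} a routine point in the Lebesgue setting that I would invoke rather than re-prove, since the essential content of (\ref{eq:bm-2}) is already secured through the isometric extension $I$ and is independent of which approximating scheme one chooses to label \emph{the} Ito integral.
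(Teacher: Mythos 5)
Your proposal is correct and follows essentially the same route as the paper: the transform formula is read off from the general $L=L_{F}$ construction in the preceding paragraph, and the isometry (\ref{eq:bm-2}) comes from orthogonality of $X_{A_{i}}^{\left(\mu\right)}$ on disjoint sets, giving $\mathbb{E}\bigl(|\sum_{i}f(s_{i})X_{A_{i}}^{\left(\mu\right)}|^{2}\bigr)=\sum_{i}|f(s_{i})|^{2}\mu(A_{i})$, followed by passage to the mesh-zero limit. The paper's proof is only a two-line sketch of exactly this computation; your version merely makes explicit the density/isometric-extension step and the identification of the Riemann-type limit with the extension, which the paper leaves implicit.
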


\begin{proof}
(sketch) For all partitions $\left\{ A_{i}\right\} $ on $V$, $s_{i}\in A_{i}$,
the Ito-isometry (\ref{eq:bm-2}) follows from the approximation:
\[
\mathbb{E}\left(\left|\sum\nolimits _{i}f\left(s_{i}\right)X_{A_{i}}^{\left(\mu\right)}\right|^{2}\right)=\sum\nolimits _{i}\left|f\left(s_{i}\right)\right|^{2}\mu\left(A_{i}\right)\xrightarrow[\;\text{mesh}\left(\pi\right)\rightarrow0\;]{}\int_{V}\left|f\right|^{2}d\mu.
\]
\end{proof}
\begin{rem}
Using this version of Ito integral, we get the following conclusions. 

Given a fixed $\sigma$-finite measure space $\left(V,\mathscr{B},\mu\right)$,
set $K=K^{\left(\mu\right)}$, 
\[
K^{\left(\mu\right)}\left(A,B\right)=\mu\left(A\cap B\right),\;A,B\in\mathscr{B}_{fin},
\]
and let $L^{2}\left(\Omega,\mathscr{C},\mathbb{P}\right)$ be the
corresponding probability space s.t. 
\[
X_{A}^{\left(\mu\right)}\left(\omega\right)=\omega\left(A\right),\;\omega\in\Omega=\mathbb{R}^{\mathscr{B}}.
\]
Then 
\[
\mathbb{E}(X_{A}^{\left(\mu\right)}X_{B}^{\left(\mu\right)})=K^{\left(\mu\right)}\left(A,B\right)=\mu\left(A\cap B\right),
\]
and the Ito integral $X_{f}^{\left(\mu\right)}=\int_{V}fdX$ is well
defined with 
\[
\mathbb{E}(|X_{f}^{\left(\mu\right)}|^{2})=\int_{V}\left|f\right|^{2}d\mu,
\]
and 
\[
\mu=QV\left(X\right)=\left[X,X\right]=\left[X\right]_{2};
\]
see also \corref{qa} and \lemref{qv}. 

The correspondence $\{X_{A}^{\left(\mu\right)}\}_{A\in\mathscr{B}}\longleftrightarrow\{X_{f}^{\left(\mu\right)}\}_{f\in L^{2}\left(\mu\right)}$
is bijective. Easy direction: given $X_{f}^{\left(\mu\right)}$ as
above, $A\in\mathscr{B}$, set $f=1_{A}$. 

For details on Ito calculus and Brownian motion, see, e.g., \cite{MR2793121,MR2966130,MR3402823,MR2049045,MR0301806,MR562914}. 
\end{rem}

\begin{cor}
Given $\left(V,\mathscr{B},\mu\right)$ fixed, $\sigma$-finite measure
space, we introduce the kernel $K^{\left(\mu\right)}$, and the associated
centered Gaussian process $X:=X^{\left(\mu\right)}$. From our Ito-calculus,
it follows that $X^{\left(\mu\right)}$ may be realized in two equivalent
ways: 
\begin{enumerate}
\item \label{enu:br1}$\Omega=\mathbb{R}^{\mathscr{B}}=$ all functions
from $\mathscr{B}$ into $\mathbb{R}$, $X_{A}^{\left(\mu\right)}\left(\omega\right)=\omega\left(A\right)$,
$A\in\mathscr{B}$;
\item \label{enu:br2}$\Omega=\mathbb{R}^{V}=$ all functions from $V$
into $\mathbb{R}$, $X_{f}^{\left(\mu\right)}\left(\omega\right)=\omega\left(f\right)$,
$f\in L^{2}\left(\mu\right)$. 
\end{enumerate}
From standard Kolmogorov consistency theory \cite{MR562914,PaSc75},
in (\ref{enu:br1}) the probability measure $\mathbb{P}$ is defined
on the cylinder $\sigma$-algebra of $\mathbb{R}^{\mathscr{B}}$,
and in case (\ref{enu:br2}) it is defined on the $\sigma$-algebra
for $\mathbb{R}^{V}$. 

We also get two equivalent versions of the covariance function for
$X$, which is indexed by $\mathscr{B}$ or by $L^{2}\left(\mu\right)$: 
\begin{enumerate}[resume]
\item \begin{flushleft}
\label{enu:br3}$\mathbb{E}\left(X_{A}X_{B}\right)=\mu\left(A\cap B\right)$,
$A,B\in\mathscr{B}$, 
\par\end{flushleft}

\end{enumerate}
\noindent \begin{flushleft}
\qquad{}$\Updownarrow$
\par\end{flushleft}
\begin{enumerate}[resume]
\item \label{enu:br4}$\mathbb{E}\left(X_{f}X_{g}\right)=\int_{V}f\left(x\right)g\left(x\right)d\mu\left(x\right)=\mathbb{E}\left(\left(\int fdX\right)\left(\int gdX\right)\right)$,
$\forall f,g\in L^{2}\left(\mu\right)$, real valued, where $X_{f}=\int fdX$
is the Ito integral formula which made the link from (\ref{enu:br3})
to (\ref{enu:br4}).
\end{enumerate}
\end{cor}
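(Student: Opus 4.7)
The plan is to prove both realizations exist by applying the Kolmogorov extension theorem twice, once over the index set $\mathscr{B}_{fin}$ and once over $L^{2}(\mu)$, and then to identify them via the Ito-integral transform from the preceding corollary. Positive-definiteness of the two candidate covariance kernels is the prerequisite in each case: for (\ref{enu:br1})--(\ref{enu:br3}) this was already established earlier (the lemma showing $K^{(\mu)}(A,B)=\mu(A\cap B)$ is p.d.), and for (\ref{enu:br2})--(\ref{enu:br4}) the kernel $(f,g)\mapsto\int fg\,d\mu$ is manifestly p.d., being the inner product on the real Hilbert space $L^{2}(\mu)$. In each case the stipulated second-order data uniquely determines a centered Gaussian measure on the respective product space equipped with its cylinder $\sigma$-algebra.

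Next I would build the bijection between the two versions. Starting from (\ref{enu:br1}), I define, for each $f\in L^{2}(\mu)$,
\[
X_{f}^{(\mu)}:=\int_{V}f\,dX,
\]
first on simple functions $f=\sum_{i}c_{i}1_{A_{i}}$ with $A_{i}\in\mathscr{B}_{fin}$ disjoint, where $X_{f}^{(\mu)}=\sum_{i}c_{i}X_{A_{i}}^{(\mu)}$ is a finite linear combination of centered jointly Gaussian variables, and then extending by continuity using the Ito isometry $\mathbb{E}(|X_{f}^{(\mu)}|^{2})=\int|f|^{2}d\mu$ from \corref{et}. Since the $L^{2}(\Omega,\mathbb{P})$-closure of a Gaussian subspace is Gaussian, $\{X_{f}^{(\mu)}\}_{f\in L^{2}(\mu)}$ is again a centered Gaussian process, and polarization of the isometry yields $\mathbb{E}(X_{f}^{(\mu)}X_{g}^{(\mu)})=\int fg\,d\mu$, which is (\ref{enu:br4}). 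Conversely, starting from (\ref{enu:br2}), the choice $f=1_{A}$ for $A\in\mathscr{B}_{fin}$ recovers $X_{A}^{(\mu)}:=X_{1_{A}}^{(\mu)}$ with covariance $\int 1_{A}1_{B}\,d\mu=\mu(A\cap B)$, which is (\ref{enu:br3}).

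To complete the identification of the two Kolmogorov realizations, I would transfer measures via the canonical evaluation maps: the map $\omega\mapsto(A\mapsto X_{A}^{(\mu)}(\omega))$ from the probability space of (\ref{enu:br2}) into $\mathbb{R}^{\mathscr{B}}$ is measurable with respect to cylinder $\sigma$-algebras, and its push-forward is centered Gaussian with covariance $\mu(\cdot\cap\cdot)$; by uniqueness in Kolmogorov's theorem this must be the measure of (\ref{enu:br1}). The reverse push-forward works the same way using $f\mapsto X_{f}^{(\mu)}$. This establishes the equivalence of the two models and hence the bijective correspondence $\{X_{A}^{(\mu)}\}\leftrightarrow\{X_{f}^{(\mu)}\}$ already flagged in the remark preceding the corollary.

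The main subtle point is not the covariance computations, which reduce to Schwarz and bilinearity, but rather the measurability/extension step: one must check that the Ito integral $X_{f}^{(\mu)}$, initially defined only $\mathbb{P}$-a.s. on simple $f$, produces a jointly measurable family indexed by the full space $L^{2}(\mu)$, and that the resulting push-forward onto $\mathbb{R}^{V}$ (respectively $\mathbb{R}^{\mathscr{B}}$) sits on the cylinder $\sigma$-algebra. This is handled exactly as in the standard white-noise construction: the Ito isometry gives an isometric embedding $L^{2}(\mu)\hookrightarrow L^{2}(\Omega,\mathbb{P})$ whose image is a Gaussian Hilbert space, and the $\sigma$-algebra it generates coincides with the pullback of the cylinder $\sigma$-algebra on $\mathbb{R}^{V}$, so no measure-theoretic obstruction remains.
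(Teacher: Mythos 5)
Your proposal is correct and follows essentially the route the paper intends: the paper gives no separate proof for this corollary, relying instead on Kolmogorov consistency for the two realizations, the Ito isometry from the preceding corollary (extension from simple functions $\sum_i c_i 1_{A_i}$ and polarization) to pass from the $\mathscr{B}$-indexed to the $L^{2}(\mu)$-indexed covariance, and the specialization $f=1_{A}$ for the converse, exactly as you do. Your added care about measurability of the extension and the push-forward identification of the two Gaussian measures fills in details the paper leaves implicit but introduces nothing at odds with it.
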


\begin{cor}
Let $X:=X^{\left(\mu\right)}$ be the Gaussian process as above, and
\begin{equation}
X_{f}:=\int_{V}fdX,
\end{equation}
then 
\begin{equation}
\mathbb{E}\left(e^{iX_{f}}\right)=e^{-\frac{1}{2}\int_{V}\left|f\right|^{2}d\mu},\;\forall f\in L^{2}\left(\mu\right).\label{eq:br5}
\end{equation}
\end{cor}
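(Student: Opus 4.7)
The plan is to reduce the identity (\ref{eq:br5}) to the classical characteristic function formula for a centered real Gaussian: if $Z\sim N(0,\sigma^{2})$ then $\mathbb{E}(e^{iZ})=e^{-\sigma^{2}/2}$. Thus it suffices to show that, for every real-valued $f\in L^{2}(\mu)$, the random variable $X_{f}$ is centered Gaussian on $(\Omega,\mathscr{C},\mathbb{P})$ with variance $\int_{V}|f|^{2}\,d\mu$. The variance statement is already the Ito-isometry (\ref{eq:bm-2}) recorded in \corref{et}, so the substantive step is establishing Gaussianity of $X_{f}$ for general $f$.

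For the Gaussianity, I would proceed by approximation. For a simple function $f=\sum_{i=1}^{n}c_{i}1_{A_{i}}$ with pairwise disjoint $A_{i}\in\mathscr{B}_{fin}$, one has $X_{f}=\sum_{i}c_{i}X_{A_{i}}^{(\mu)}$, a finite linear combination of members of the centered jointly Gaussian family $\{X_{A}^{(\mu)}\}_{A\in\mathscr{B}_{fin}}$; hence $X_{f}$ is itself centered Gaussian. For general $f\in L^{2}(\mu)$, choose simple $f_{n}\to f$ in $L^{2}(\mu)$; the Ito isometry gives $\mathbb{E}|X_{f}-X_{f_{n}}|^{2}=\|f-f_{n}\|_{L^{2}(\mu)}^{2}\to 0$, so $X_{f_{n}}\to X_{f}$ in $L^{2}(\Omega,\mathbb{P})$, and in particular in distribution. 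Since the class of centered real Gaussians is closed under convergence in distribution (with variances passing to the limit), $X_{f}$ is centered Gaussian with variance $\|f\|_{L^{2}(\mu)}^{2}$, and evaluating the characteristic function at $t=1$ yields (\ref{eq:br5}).

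Alternatively, one can bypass the appeal to closure of Gaussians under distributional limits by first computing directly on simple $f$: the variables $X_{A_{i}}^{(\mu)}$ indexed by disjoint $A_{i}$ satisfy $\mathbb{E}(X_{A_{i}}^{(\mu)}X_{A_{j}}^{(\mu)})=\mu(A_{i}\cap A_{j})=0$, and since they are jointly Gaussian this uncorrelatedness upgrades to independence, giving
\[
\mathbb{E}\bigl(e^{i\sum_{j}c_{j}X_{A_{j}}^{(\mu)}}\bigr)=\prod_{j}\mathbb{E}\bigl(e^{ic_{j}X_{A_{j}}^{(\mu)}}\bigr)=\prod_{j}e^{-\frac{1}{2}c_{j}^{2}\mu(A_{j})}=e^{-\frac{1}{2}\int_{V}|f|^{2}d\mu}.
\]
The $L^{2}(\mu)$-approximation then transfers the identity to arbitrary $f$: the left side passes by bounded convergence (since $|e^{iX_{f_{n}}}|\le 1$ and $X_{f_{n}}\to X_{f}$ in probability), while the right side passes by continuity of $f\mapsto\int|f|^{2}d\mu$. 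The only genuine subtlety is ensuring that the limiting object remains Gaussian, but this route finesses that issue, since the explicit product formula on simple functions combined with dominated convergence gives (\ref{eq:br5}) directly, after which Gaussianity of $X_{f}$ becomes a \emph{consequence} of the closed-form characteristic function.
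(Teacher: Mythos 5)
Your argument is correct, and it is considerably more detailed than what the paper records: the paper disposes of this corollary in one line, asserting a ``direct proof from the power series expansions,'' i.e.\ expanding $e^{iX_{f}}$ termwise and using the Gaussian moment identities $\mathbb{E}(X_{f}^{2n+1})=0$ and $\mathbb{E}(X_{f}^{2n})=\frac{(2n)!}{2^{n}n!}\bigl(\int_{V}|f|^{2}d\mu\bigr)^{n}$, which resum to $e^{-\frac{1}{2}\int|f|^{2}d\mu}$. That route, like your first one, presupposes that $X_{f}$ is centered Gaussian with variance $\|f\|_{L^{2}(\mu)}^{2}$ for \emph{every} $f\in L^{2}(\mu)$, a point the paper leaves implicit; you supply it properly via simple-function approximation, the Ito isometry, and closure of the Gaussian class under limits in distribution. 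Your second route is the more self-contained of the two and arguably preferable: on disjoint sets the variables $X_{A_{i}}^{(\mu)}$ are uncorrelated and jointly Gaussian, hence independent, so the characteristic function factors into $\prod_{j}e^{-\frac{1}{2}c_{j}^{2}\mu(A_{j})}$ on simple functions, and bounded convergence transfers the closed-form identity to general $f$, with Gaussianity of $X_{f}$ then falling out as a corollary rather than being an input. The only small point worth making explicit is that convergence of $X_{f_{n}}\to X_{f}$ in $L^{2}(\Omega,\mathbb{P})$ gives convergence in probability, hence a.s.\ convergence along a subsequence, which is what licenses the dominated convergence step; but this is routine. Both your routes and the paper's power-series computation rest on the same two pillars (Gaussianity plus the Ito isometry), so the difference is one of organization rather than substance.
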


\begin{proof}
Direct proof from the power series expansions. See, e.g., \cite{MR3441738,zbMATH06690858},
and the papers cited there.
\end{proof}
\begin{rem}
Note that (\ref{eq:br5}) is analogous to the Gelfand triple construction,
but more general. In the present setting, we do not need a Gelfand
triple in order to make process (\ref{eq:br5}) above. 
\end{rem}

\begin{cor}
If $\left\{ f_{n}\right\} _{n\in\mathbb{N}_{0}}$ is an orthonormal
basis (ONB) in $L^{2}\left(\mu\right)$, then $\left\{ X_{f_{n}}\right\} _{n\in\mathbb{N}_{0}}$
is an i.i.d. $N\left(0,1\right)$ system, i.e., $X_{f_{n}}=Z_{n}\sim N\left(0,1\right)$,
and the following Karhunen-Loeve decomposition holds: 
\begin{equation}
X_{A}=\sum_{n=0}^{\infty}\left(\int_{A}f_{n}d\mu\right)Z_{n},\;\forall A\in\mathscr{B}_{fin}.\label{eq:br6}
\end{equation}
\end{cor}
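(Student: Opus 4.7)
The plan is to exploit the Ito isometry $f\mapsto X_f$ established in \corref{et}, which embeds $L^2(\mu)$ isometrically into $L^2(\Omega,\mathbb{P})$, sending $1_A$ to $X_A^{(\mu)}$. Under this embedding, the Karhunen--Loeve expansion becomes nothing more than the expansion of the indicator function $1_A$ with respect to the ONB $\{f_n\}$ in $L^2(\mu)$, transported to the Gaussian side.

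First I would verify the distributional assertion. Each $X_{f_n}$ is a Gaussian random variable, since the whole family $\{X_f\}_{f\in L^2(\mu)}$ is centered Gaussian (the construction of $X^{(\mu)}$ places all $X_A^{(\mu)}$ inside a jointly Gaussian $L^2(\Omega,\mathbb{P})$, and $X_f$ is defined as an $L^2$-limit of linear combinations of these). The Ito isometry (\ref{eq:bm-2}) then gives
\[
\mathbb{E}(X_{f_n} X_{f_m}) \;=\; \int_V f_n f_m\,d\mu \;=\; \delta_{n,m},
\]
by orthonormality of $\{f_n\}$. Hence each $X_{f_n}$ is $N(0,1)$, and since the family $\{X_{f_n}\}$ is jointly Gaussian with diagonal covariance, they are mutually independent; so $Z_n:=X_{f_n}$ is an i.i.d.\ $N(0,1)$ system.

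For the expansion (\ref{eq:br6}), fix $A\in\mathscr{B}_{fin}$; then $1_A\in L^2(\mu)$ and its Fourier expansion in the ONB reads
\[
1_A \;=\; \sum_{n=0}^\infty \langle 1_A,f_n\rangle_{L^2(\mu)}\,f_n \;=\; \sum_{n=0}^\infty \Bigl(\int_A f_n\,d\mu\Bigr) f_n,
\]
with convergence in $L^2(\mu)$. Applying the isometry $f\mapsto X_f$ term by term and using its continuity, I obtain
\[
X_A^{(\mu)} \;=\; X_{1_A} \;=\; \sum_{n=0}^\infty \Bigl(\int_A f_n\,d\mu\Bigr) X_{f_n} \;=\; \sum_{n=0}^\infty \Bigl(\int_A f_n\,d\mu\Bigr) Z_n,
\]
with convergence in $L^2(\Omega,\mathbb{P})$, which is the claimed decomposition.

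The only step that requires any care is justifying that $f\mapsto X_f$ is genuinely a bounded linear isometry on all of $L^2(\mu)$, so that one may pass the $L^2(\mu)$-convergent sum through; but this is exactly the content of \corref{et} together with the fact that convergence in $L^2(\mu)$ is transported to convergence in $L^2(\Omega,\mathbb{P})$ under an isometry. No further obstacle arises: independence follows purely from joint Gaussianity plus orthogonality, and the $N(0,1)$ normalization is the Ito isometry (\ref{eq:bm-2}) applied to unit vectors.
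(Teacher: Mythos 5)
The paper states this corollary without proof, so there is nothing to compare against line by line; your argument is correct and supplies exactly the reasoning the paper evidently intends: the (polarized) Ito isometry turns orthonormality of $\left\{ f_{n}\right\}$ into $\mathbb{E}\left(X_{f_{n}}X_{f_{m}}\right)=\delta_{n,m}$, joint Gaussianity upgrades uncorrelatedness to independence, and applying the isometry $f\mapsto X_{f}$ to the $L^{2}\left(\mu\right)$-expansion $1_{A}=\sum_{n}\left(\int_{A}f_{n}d\mu\right)f_{n}$ yields (\ref{eq:br6}) with convergence in $L^{2}\left(\Omega,\mathbb{P}\right)$. The only implicit step worth flagging is that (\ref{eq:bm-2}) gives the isometry of norms and you need its polarization for the cross-covariances, but that is immediate and is in any case asserted elsewhere in the paper.
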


\begin{cor}
\label{cor:qa}Assume $\mu$ is non-atomic. Then the quadratic variation
of $X:=X^{\left(\mu\right)}$ is $\mu$ itself, i.e., if $B\in\mathscr{B}$,
$d\left[X,X\right]=d\mu$. 
\end{cor}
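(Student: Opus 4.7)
The plan is to verify that for each $B\in\mathscr{B}_{fin}$ one has $[X,X](B)=\mu(B)$, interpreted as the $L^{2}(\mathbb{P})$ limit of the quadratic sums taken over refining finite measurable partitions of $B$. Given a finite partition $\pi=\{A_{i}\}_{i=1}^{n}$ of $B$ into disjoint sets in $\mathscr{B}_{fin}$, set
\[
S_{\pi}\;:=\;\sum_{i=1}^{n}\bigl(X_{A_{i}}^{(\mu)}\bigr)^{2},\qquad \mathrm{mesh}(\pi)\;:=\;\max_{i}\mu(A_{i}).
\]
I will compute $\mathbb{E}[S_{\pi}]=\mu(B)$ exactly, and bound $\mathrm{Var}(S_{\pi})$ by a multiple of $\mathrm{mesh}(\pi)\cdot\mu(B)$; then non-atomicity will deliver a sequence of partitions $\pi_{n}$ with $\mathrm{mesh}(\pi_{n})\to 0$, whence $S_{\pi_{n}}\to\mu(B)$ in $L^{2}(\mathbb{P})$ and therefore $d[X,X]=d\mu$ as (random) measures on $\left(V,\mathscr{B}\right)$.

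The mean computation is immediate from $X_{A_{i}}^{(\mu)}\sim N\bigl(0,\mu(A_{i})\bigr)$, which gives $\mathbb{E}[(X_{A_{i}}^{(\mu)})^{2}]=\mu(A_{i})$ and hence $\mathbb{E}[S_{\pi}]=\sum_{i}\mu(A_{i})=\mu(B)$. For the variance, I use the defining covariance $\mathbb{E}(X_{A_{i}}^{(\mu)}X_{A_{j}}^{(\mu)})=\mu(A_{i}\cap A_{j})$: for $i\neq j$ the sets are disjoint so the covariance vanishes, and since $(X_{A_{1}}^{(\mu)},\ldots,X_{A_{n}}^{(\mu)})$ is jointly Gaussian, vanishing pairwise covariance upgrades to independence, in particular of $(X_{A_{i}}^{(\mu)})^{2}$ and $(X_{A_{j}}^{(\mu)})^{2}$. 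Combining this with the Gaussian identity $\mathrm{Var}(Z^{2})=2\sigma^{4}$ for $Z\sim N(0,\sigma^{2})$ yields
\[
\mathrm{Var}(S_{\pi})\;=\;\sum_{i}2\mu(A_{i})^{2}\;\leq\;2\,\mu(B)\cdot\mathrm{mesh}(\pi).
\]

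The non-atomicity hypothesis enters at exactly one step: on a non-atomic $\sigma$-finite measure space, every $B\in\mathscr{B}_{fin}$ admits, for each $\varepsilon>0$, a finite partition into sets in $\mathscr{B}_{fin}$ of measure below $\varepsilon$ (a standard Sierpi\'nski-type result). Picking such partitions $\pi_{n}$ with $\mathrm{mesh}(\pi_{n})\to 0$ and applying the two displays above gives $S_{\pi_{n}}\to\mu(B)$ in $L^{2}(\mathbb{P})$, completing the identification.

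The main point that deserves care, rather than an outright obstacle, is the choice of formalism for the quadratic variation of the \emph{set}-indexed process $X^{(\mu)}$ (as opposed to a time-indexed one): one must read $[X,X]$ as the random measure defined by the $L^{2}$-limit above, and argue that the limit is independent of the refining sequence, which is automatic once the $L^{2}$-bound is uniform in $\pi$ (as it is here). The reason non-atomicity cannot be dropped is equally transparent: an atom $\{p\}$ with $\mu(\{p\})>0$ would persist in every refinement and contribute a genuine $\chi^{2}$-fluctuation $(X_{\{p\}}^{(\mu)})^{2}-\mu(\{p\})$, destroying the clean identity $d[X,X]=d\mu$.
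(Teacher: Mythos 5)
Your argument is correct and follows essentially the same route as the paper's proof of Lemma \ref{lem:qv}: compute $\mathbb{E}\bigl(|\mu(B)-\sum_i X_{A_i}^2|^2\bigr)=2\sum_i\mu(A_i)^2$ using the Gaussian fourth moment and the independence of the increments over disjoint sets, then let the partition refine. Your bound $\sum_i\mu(A_i)^2\le\mathrm{mesh}(\pi)\,\mu(B)$ and your explicit remark on where non-atomicity enters (existence of arbitrarily fine partitions) are slightly cleaner than the paper's Lagrange-multiplier lemma, but the substance is identical.
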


\begin{proof}
Let $B\in\mathscr{B}$ with a partition $\left\{ A_{i}\right\} $
s.t. $B=\cup A_{i}$, $A_{i}\cap A_{j}=\emptyset$, $i\neq j$. If
$\mu$ is non-atomic, then 
\begin{equation}
\underset{=:\left[X,X\right]=\left[X\right]_{2}}{\underbrace{\lim\sum_{i}\left(X_{A_{i}}\right)^{2}}}=\mu\left(B\right)1\label{eq:br7}
\end{equation}
where $1$ denotes the constant function in $L^{2}\left(\Omega,\mathbb{P}\right)$,
and the limit is over the set of all partitions of $B$ with mesh
tending to 0. See \lemref{qv} for additional details.
\end{proof}
\begin{cor}[Generalized Ito lemma]
Let $f:\mathbb{R}\longrightarrow\mathbb{R}$, or $\mathbb{C}$, $f\in C^{2}$,
then 
\begin{equation}
df\left(X_{s}\right)=f'\left(X_{s}\right)dX_{s}+\frac{1}{2}f''\left(X_{s}\right)d\mu\left(s\right),\label{eq:br8}
\end{equation}
or equivalently, 
\begin{equation}
f\left(X_{B}\right)=\int_{B}f'\left(X_{s}\right)dX_{s}+\frac{1}{2}\int_{B}f''\left(X_{s}\right)d\mu\left(s\right),\label{eq:br9}
\end{equation}
for $\forall B\in\mathscr{B}_{fin}$, where we used the Ito integral,
and $d\left[X,X\right]\left(s\right)=d\mu\left(s\right)$ for the
quadratic variation. 
\end{cor}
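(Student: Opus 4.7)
The plan is to run the classical one-dimensional Itô-formula argument in the measure-space setting, combining the quadratic variation identity $d[X,X]=d\mu$ from \corref{qa} with the Itô isometry \eqref{eq:bm-2} from \corref{et}. By a standard truncation and uniform-approximation of $f,f',f''$ on compact sets, one first reduces to the case where $f,f',f''$ are all bounded and uniformly continuous on $\mathbb{R}$; the general $C^{2}$ statement then follows by localization.

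Fix $B\in\mathscr{B}_{fin}$ and choose a refining sequence of ordered measurable partitions $\pi_{n}=\{A_{1}^{(n)},\dots,A_{N_{n}}^{(n)}\}$ of $B$ with mesh tending to zero in the sense of \corref{qa}. Set $B_{i}^{(n)}:=\bigcup_{j\le i}A_{j}^{(n)}$. By the $\sigma$-additivity established inside \thmref{n1} (equivalently, by $K^{(\mu)}(\cdot,A_{i})\perp K^{(\mu)}(\cdot,A_{j})$ for $i\ne j$), one has $X_{B_{i}^{(n)}}-X_{B_{i-1}^{(n)}}=X_{A_{i}^{(n)}}$. Telescoping and applying Taylor's theorem with Lagrange remainder to each increment yields a random $\xi_{i}^{(n)}$ between $X_{B_{i-1}^{(n)}}$ and $X_{B_{i}^{(n)}}$ such that
\begin{equation*}
f(X_{B})-f(0)=\sum_{i=1}^{N_{n}}f'(X_{B_{i-1}^{(n)}})\,X_{A_{i}^{(n)}}+\tfrac{1}{2}\sum_{i=1}^{N_{n}}f''(\xi_{i}^{(n)})\,(X_{A_{i}^{(n)}})^{2}.
\end{equation*}

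Two limit passes as $n\to\infty$ finish the argument. The first sum is exactly the Riemann-type approximation of $\int_{B}f'(X_{s})\,dX_{s}$ from \corref{et}, and converges in $L^{2}(\Omega,\mathbb{P})$ to that integral by the Itô isometry \eqref{eq:bm-2}. For the remainder sum I split
\begin{equation*}
\sum_{i}f''(\xi_{i}^{(n)})(X_{A_{i}^{(n)}})^{2}=\sum_{i}f''(\xi_{i}^{(n)})\bigl[(X_{A_{i}^{(n)}})^{2}-\mu(A_{i}^{(n)})\bigr]+\sum_{i}f''(\xi_{i}^{(n)})\,\mu(A_{i}^{(n)}),
\end{equation*}
where the second (Riemann--Stieltjes) piece tends to $\int_{B}f''(X_{s})\,d\mu(s)$ by boundedness and continuity of $f''$ and the squeeze $|\xi_{i}^{(n)}-X_{B_{i-1}^{(n)}}|\le|X_{A_{i}^{(n)}}|\to 0$ in $L^{2}$.

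The main obstacle is showing that the centered piece vanishes in $L^{2}$. Here I would use Gaussianity plus independence across disjoint $A_{i}^{(n)}$: the fourth-moment identity $\mathbb{E}\bigl[((X_{A_{i}^{(n)}})^{2}-\mu(A_{i}^{(n)}))^{2}\bigr]=2\mu(A_{i}^{(n)})^{2}$, combined with $\|f''\|_{\infty}<\infty$, gives a variance bound of order $\sum_{i}\mu(A_{i}^{(n)})^{2}$, which vanishes as $n\to\infty$ since $\mu$ is non-atomic and the partitions refine. This is precisely the quadratic-variation convergence already recorded in \eqref{eq:br7} and \corref{qa}. Assembling the two limits yields \eqref{eq:br9}, and the differential form \eqref{eq:br8} is just its infinitesimal restatement.
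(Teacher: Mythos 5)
The paper states this corollary with no proof at all --- it is followed immediately by a remark --- so there is nothing of the authors' to compare your argument against. What you have written is the classical Taylor-expansion proof of the Ito formula transplanted to the set-indexed setting, and in the concrete ordered case of \exaref{bm1} ($V=[0,\infty)$, $\mu=$ Lebesgue, $X_{[0,t]}=W_{[0,t]}$) it is sound in outline: the telescoping/Taylor decomposition, the fourth-moment bound giving a variance of order $\sum_{i}\mu(A_{i}^{(n)})^{2}\to 0$ for the centered quadratic piece (exactly the mechanism of \eqref{eq:br7} and \lemref{qv}), and the localization step are all standard.

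In the generality the corollary actually claims, however, two gaps remain. First, your sum $\sum_{i}f'(X_{B_{i-1}^{(n)}})X_{A_{i}^{(n)}}$ has a \emph{random, adapted} integrand, so its $L^{2}$-convergence requires the adapted Ito isometry $\mathbb{E}\bigl[(\sum_{i}H_{i}X_{A_{i}})^{2}\bigr]=\sum_{i}\mathbb{E}[H_{i}^{2}]\mu(A_{i})$ for $H_{i}$ measurable with respect to $\sigma(X_{A_{j}}:j<i)$; the isometry \eqref{eq:bm-2} you cite is only for deterministic $f\in L^{2}(\mu)$, and the paper nowhere extends it. The extension is easy (cross terms vanish by independence of $X_{A_{j}}$ from the earlier increments), but more seriously the whole construction depends on the arbitrary linear ordering you impose on the cells of $\pi_{n}$, and you do not show that the limit is independent of that choice --- which is precisely what would make $\int_{B}f'(X_{s})\,dX_{s}$ well defined. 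Second, the limit objects $f'(X_{s})$, $f''(X_{s})$ and the identification of $\sum_{i}f''(\xi_{i}^{(n)})\mu(A_{i}^{(n)})$ with $\int_{B}f''(X_{s})\,d\mu(s)$ presuppose a pointwise-indexed version $s\mapsto X_{s}$ of the process, whereas $X^{(\mu)}$ is indexed by sets in $\mathscr{B}_{fin}$; outside the totally ordered case there is no canonical such object. These are defects of the statement as much as of your proof, but a complete argument would have to confront them rather than pass over them.
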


\begin{rem}
We can do most of the white noise analysis in the more general setting,
i.e., w.r.t (\ref{eq:br9}). 
\end{rem}

\begin{cor}
Let the setting be as in \thmref{n2}, i.e., $\left(V,\mathscr{B},\mu\right)$,
and $K$ are specified as in (\ref{eq:n2-1})-(\ref{eq:n2-2}). In
particular, in addition to $K$, we also have the $\mu$-kernel $K^{\left(\mu\right)}\left(A,B\right)=\mu\left(A\cap B\right)$
as in (\ref{eq:n2-3}). Let $X^{\left(K\right)}$ be the centered
Gaussian process with kernel $K$, i.e., 
\begin{equation}
\mathbb{E}(X_{A}^{\left(K\right)}X_{B}^{\left(K\right)})=K\left(A,B\right),\;\forall A,B\in\mathscr{B}_{fin}.\label{eq:br10}
\end{equation}
Let $X^{\left(\mu\right)}$ be the Gaussian process (\ref{eq:a5})
with Ito integral 
\begin{equation}
X_{f}^{\left(\mu\right)}=\int_{V}f\left(x\right)dX_{x}^{\left(\mu\right)},\;f\in L^{2}\left(\mu\right),\label{eq:br11}
\end{equation}
and 
\begin{equation}
\mathbb{E}\left(|X_{f}^{\left(\mu\right)}|^{2}\right)=\int_{V}\left|f\right|^{2}d\mu;\label{eq:br12}
\end{equation}
see \corref{et}. 

Then there is a function 
\begin{equation}
G:\mathscr{B}_{fin}\times V\longrightarrow\mathbb{R},\label{eq:br13}
\end{equation}
measurable in the second variable, such that
\begin{align}
G\left(A,\cdot\right) & \in L^{2}\left(\mu\right),\;\forall A\in\mathscr{B}_{fin};\label{eq:br14}\\
K\left(A,B\right) & =\int_{V}G\left(A,x\right)G\left(B,x\right)d\mu\left(x\right)\label{eq:br15}
\end{align}
(compare with \defref{p1} below), and 
\begin{equation}
X_{A}^{\left(K\right)}=\int_{V}G\left(A,x\right)dX_{x}^{\left(\mu\right)}.\label{eq:br16}
\end{equation}
\end{cor}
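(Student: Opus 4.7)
The plan is to use the dominance $c(\mu)K^{(\mu)}-K\geq 0$ from \thmref{n2} to build a feature map for $K$ landing in $L^{2}(\mu)$, via an orthonormal-basis argument, and then to read off \eqref{eq:br16} from the Ito isometry of \corref{et}.

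First, I would extract a dimension bound. By \thmref{n2} and \lemref{rk2}, the inclusion $\mathscr{H}(K)\hookrightarrow\mathscr{H}(K^{(\mu)})$ is bounded (of norm at most $\sqrt{c(\mu)}$). Composing with the isometric isomorphism $(W^{(\mu)})^{-1}:\mathscr{H}(K^{(\mu)})\to L^{2}(\mu)$ supplied by \thmref{n1} (sending a signed measure $F$ to its Radon--Nikodym derivative $dF/d\mu$) produces a bounded linear injection $\mathscr{H}(K)\hookrightarrow L^{2}(\mu)$. In particular $\dim\mathscr{H}(K)\leq\dim L^{2}(\mu)$, so (in the standard separable setting; transfinite modifications cover the rest) $L^{2}(\mu)$ carries an orthonormal sequence $\{f_{n}\}$ matching the cardinality of any chosen ONB $\{e_{n}\}$ of $\mathscr{H}(K)$.

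Next, I would \emph{define}
\[
G(A,x)\;:=\;\sum_{n}e_{n}(A)\,f_{n}(x),\qquad A\in\mathscr{B}_{fin},\;x\in V.
\]
Parseval in $\mathscr{H}(K)$ applied to the expansion $K(\cdot,A)=\sum_{n}e_{n}(A)e_{n}$ yields $\sum_{n}|e_{n}(A)|^{2}=\|K(\cdot,A)\|_{\mathscr{H}(K)}^{2}=K(A,A)<\infty$, so the series converges in $L^{2}(\mu)$; hence $G(A,\cdot)\in L^{2}(\mu)$, and is measurable in $x$ as an $L^{2}$-limit of measurable functions (pick any a.e.-convergent subsequence to obtain a pointwise representative). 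Orthonormality of $\{f_{n}\}$ together with Parseval then gives
\[
\int_{V}G(A,x)G(B,x)\,d\mu(x)=\sum_{n}e_{n}(A)e_{n}(B)=\langle K(\cdot,A),K(\cdot,B)\rangle_{\mathscr{H}(K)}=K(A,B),
\]
which is \eqref{eq:br15}. Finally, setting $X_{A}^{(K)}:=X_{G(A,\cdot)}^{(\mu)}=\int_{V}G(A,x)\,dX_{x}^{(\mu)}$ through the Ito integral of \corref{et}, this family is centered Gaussian (as a linear image of $\{X_{f}^{(\mu)}\}_{f\in L^{2}(\mu)}$) with covariance $\langle G(A,\cdot),G(B,\cdot)\rangle_{L^{2}(\mu)}=K(A,B)$ by the Ito isometry \eqref{eq:bm-2}, so \eqref{eq:br16} holds on the probability space of $X^{(\mu)}$.

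The only non-routine step is the dimension count above: without the dominance $c(\mu)K^{(\mu)}\geq K$ from \thmref{n2} there is no a priori reason why $L^{2}(\mu)$ should be large enough to support a feature map for $K$, and it is precisely that dominance which licenses the choice of $\{f_{n}\}$. The resulting $G$ is manifestly non-canonical---it depends on both chosen orthonormal systems---but only existence is asserted.
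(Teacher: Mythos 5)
Your proof is correct, and it departs from the paper's at exactly one point: the existence of the factorization kernel $G$. The paper disposes of that step in a single sentence by invoking \thmref{n2} together with the Hida--Cram\'er transform as a black box, and then, just as you do, takes (\ref{eq:br16}) as the definition of $X^{\left(K\right)}$ and reads the covariance off the Ito isometry of \corref{et}. You instead construct $G$ by hand: the dominance of $c\left(\mu\right)K^{\left(\mu\right)}$ over $K$ from \thmref{n2}, via \lemref{rk2}, gives a bounded injection $\mathscr{H}\left(K\right)\hookrightarrow\mathscr{H}(K^{\left(\mu\right)})\simeq L^{2}\left(\mu\right)$, hence the dimension bound that licenses choosing an orthonormal system $\left\{ f_{n}\right\} \subset L^{2}\left(\mu\right)$ indexed like an ONB $\left\{ e_{n}\right\} $ of $\mathscr{H}\left(K\right)$, and then $G\left(A,\cdot\right)=\sum_{n}e_{n}\left(A\right)f_{n}$ works by Parseval, since $\sum_{n}\left|e_{n}\left(A\right)\right|^{2}=K\left(A,A\right)<\infty$. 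This is essentially the content of the Parseval-frame lemma and of \propref{tr1} later in the paper, transplanted here; what it buys is a self-contained, elementary existence proof that makes visible exactly which hypothesis is doing the work (you are right that without $c\left(\mu\right)K^{\left(\mu\right)}-K$ positive definite, $L^{2}\left(\mu\right)$ could simply be too small), at the cost of a manifestly non-canonical $G$, whereas the Hida--Cram\'er route is aimed at a canonical representation. Your closing observations --- that (\ref{eq:br16}) produces a version of $X^{\left(K\right)}$ realized on the probability space of $X^{\left(\mu\right)}$, and that only existence of $G$ is asserted --- match how the paper itself treats the statement, so there is no gap.
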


\begin{proof}
The existence of $G$ follows from \thmref{n2}, and the Hida-Cramer
transform \cite{MR3445605}. Hence, by (\ref{eq:br11}), we may define
a Gaussian process $X^{\left(K\right)}$ by (\ref{eq:br16}); and
for $A,B\in\mathscr{B}_{fin}$, we have
\begin{eqnarray*}
\mathbb{E}(X_{A}^{\left(K\right)}X_{B}^{\left(K\right)}) & = & K\left(A,B\right)\\
 & \underset{\left(\text{by }\left(\ref{eq:br12}\right)\right)}{=} & \int_{V}G\left(A,x\right)G\left(B,x\right)d\mu\left(x\right),
\end{eqnarray*}
which is the desired conclusion.
\end{proof}

\section{Gaussian interpolation of Markov processes}

Markov models, or hidden Markov models, are ubiquitous in model building,
e.g., to models for speech and handwriting recognition, to software,
and learning mechanisms in biological neural networks. Within the
study of support vector machines, one use of Markov processes is to
solve both the problem of classification, and that of clustering.
The list of optimization tasks includes that of maximizing an ``expected
goodness of classification,'' or a ``goodness of clustering'' criterion.
This in turn leads to the study of specific kinds of probability distribution
over sequences of vectors \textemdash{} for which we have good parameter
estimation, and good marginal distribution algorithms.

Hidden Markov models tend to be robust in many uses, for example,
in determining the nature of an input signal, given the corresponding
an output. The model aims to determine the most probable set of parameters
which dictate input states, when based on an observed sequence of
output states.

The literature is quite large: Here we mention just \cite{MR3451525,MR3617637,MR3646630},
and the papers cited there.

\subsection{The Markov processes}

In our previous work \cite{JoPT17b}, we already discussed applications
of the family of Gaussian processes from \secref{wn}. Our present
aim is to use them in an interpolation algorithm for non-atomic Markov
processes. 

Recall the Gaussian processes $\{X_{A}^{\left(\mu\right)}\mathrel{;}A\in\mathscr{B}_{fin}\}$,
such that 
\begin{equation}
\mathbb{E}(X_{A}^{\left(\mu\right)}X_{B}^{\left(\mu\right)})=\mu\left(A\cap B\right),\;\forall A,B\in\mathscr{B}_{fin};\label{eq:m1}
\end{equation}
where $\left(V,\mathscr{B},\mu\right)$ is a given measure space,
and $\mu$ assumed positive and $\sigma$-finite. 
\[
\xymatrix{ & K^{\left(\mu\right)}\ar[rd]\\
\mu\ar@{~>}[rr]\ar[ur] &  & X^{\left(\mu\right)}
}
\]

Below we consider a family of Gaussian processes corresponding to
a given Markov process $P\left(x,A\right)$, where $\left(V,\mathscr{B}\right)$
is a measure space, $x\in V$, $P\left(x,\cdot\right)$ is a non-atomic
probability measure, i.e., $P\left(x,V\right)=1$. We shall denote
$P$ as the transition operator, defined for measurable functions
$f$ on $\left(V,\mathscr{B}\right)$, by
\begin{equation}
\left(Pf\right)\left(x\right)=\int_{V}f\left(y\right)P\left(x,dy\right),\;\forall x\in V.\label{eq:m2}
\end{equation}
Thus $P\left(\mathbbm{1}\right)=\mathbbm{1}$, and the constant function
$\mathbbm{1}$ is harmonic. (Also see \cite{JoPT17a,JoPT17c}, and
the papers cited there.)
\begin{lem}
Every generalized Markov process $P\left(x,\cdot\right)$ induces
a dual pairs of actions: 
\begin{enumerate}
\item action on measurable functions $f$ on $\left(V,\mathscr{B}\right)$,
\[
f\longmapsto\int f\left(y\right)P\left(x,dy\right)=\left(Pf\right)\left(x\right),\;x\in V;\;\text{and}
\]
\item action on signed measures $\nu$ on $\left(V,\mathscr{B}\right)$,
\[
\nu\longmapsto\int P\left(x,\cdot\right)d\nu\left(x\right)=P^{*}\left(\nu\right),
\]
where 
\[
\left(P^{*}\left(\nu\right)\right)\left(A\right)=\int P\left(x,A\right)d\nu\left(x\right),\;\forall A\in\mathscr{B}.
\]
\end{enumerate}
\end{lem}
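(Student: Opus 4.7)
The statement is essentially a bookkeeping lemma asserting that the two formulas make sense and that they are each other's adjoints. I would organize the verification in three short steps.

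\textbf{Step 1 (action on functions).} For a bounded measurable $f:V\to\mathbb{R}$, the integral $(Pf)(x):=\int_V f(y)\,P(x,dy)$ is well defined because $P(x,\cdot)$ is a probability measure, and $|Pf(x)|\le \|f\|_\infty$. Joint measurability of $(x,A)\mapsto P(x,A)$ (part of the very definition of a Markov kernel) combined with the standard monotone-class / $\pi$-$\lambda$ argument shows that $x\mapsto (Pf)(x)$ is $\mathscr{B}$-measurable: it holds for indicators $f=1_A$ by the definition of $P$, extends to simple functions by linearity, to bounded measurable $f$ by monotone limits, and then to $f\in L^1(P(x,\cdot))$ by dominated convergence. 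The identity $P(\mathbbm{1})=\mathbbm{1}$ in the text is the special case $f\equiv 1$.

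\textbf{Step 2 (action on signed measures).} Given a signed measure $\nu$ on $(V,\mathscr{B})$, define $(P^{*}\nu)(A):=\int_V P(x,A)\,d\nu(x)$ for $A\in\mathscr{B}$. Write $\nu=\nu_+-\nu_-$ for its Jordan decomposition; then for every $A\in\mathscr{B}$, $x\mapsto P(x,A)$ is bounded and measurable by Step 1 (take $f=1_A$), hence integrable against both $\nu_\pm$, so $P^{*}\nu(A)$ is a well-defined real number. To verify $\sigma$-additivity, take disjoint $\{A_n\}_{n\in\mathbb{N}}\subset\mathscr{B}$ with $A=\bigsqcup_n A_n$. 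Countable additivity of each measure $P(x,\cdot)$ gives
\begin{equation*}
P(x,A)=\sum_{n=1}^{\infty} P(x,A_n),\qquad x\in V,
\end{equation*}
with partial sums uniformly bounded by $1$. Dominated convergence applied separately to $\nu_+$ and $\nu_-$ then yields $(P^{*}\nu)(A)=\sum_n (P^{*}\nu)(A_n)$, so $P^{*}\nu$ is a signed measure.

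\textbf{Step 3 (duality).} I would then record the compatibility $\int_V (Pf)\,d\nu=\int_V f\,d(P^{*}\nu)$ for $f$ bounded measurable, which is the reason to call $P^{*}$ the dual of $P$. This is a one-line Fubini argument, legal because $(x,y)\mapsto f(y)$ is bounded and $P(x,dy)\,d|\nu|(x)$ is a finite product measure on $V\times V$ (after Jordan-decomposing $\nu$):
\begin{equation*}
\int_V (Pf)(x)\,d\nu(x)=\int_V\!\!\int_V f(y)\,P(x,dy)\,d\nu(x)=\int_V f(y)\,d(P^{*}\nu)(y).
\end{equation*}

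\textbf{Main obstacle.} There is no genuine obstacle; the only point that needs care is the joint measurability of $(x,A)\mapsto P(x,A)$ used implicitly when one defines the two actions. Since the paper already regards $P(x,\cdot)$ as a bona fide transition kernel, this is part of the standing hypothesis, and the lemma follows from the three steps above.
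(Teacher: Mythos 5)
Your argument is correct and complete. Note, however, that the paper offers no proof of this lemma at all: it is stated as a standard bookkeeping fact and the text moves directly on to the iterated kernels $P_{n}$. Your three steps (measurability of $x\mapsto (Pf)(x)$ via the monotone-class argument, $\sigma$-additivity of $P^{*}\nu$ via Jordan decomposition and dominated convergence, and the Fubini identity $\int (Pf)\,d\nu=\int f\,d(P^{*}\nu)$ justifying the word ``dual'') are exactly the standard verification the paper implicitly relies on, so there is nothing to compare and nothing missing.
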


As in standard Markov theory, 
\[
P_{2}\left(x,A\right)=\int P\left(x,dy\right)P\left(y,A\right)=P\left[P\left(\cdot,A\right)\right]\left(x\right),
\]
and inductively
\begin{equation}
P_{n+1}\left(x,A\right)=\int P_{n}\left(x,dy\right)P\left(y,A\right).\label{eq:m3}
\end{equation}

For each of the measures $P\left(x,\cdot\right),P_{2}\left(x,\cdot\right)\cdots,P_{n}\left(x,\cdot\right)$,
there is a corresponding white noise process $X^{\left(x\right)}$,
i.e., an indexed family of Gaussian processes $X_{A}^{\left(x\right)}$
$\sim$ $P\left(x,A\right)$, where $\mathbb{E}_{x}(X_{A}^{\left(x\right)})=0$,
and 
\begin{equation}
\mathbb{E}_{x}(X_{A}^{\left(x\right)}X_{B}^{\left(x\right)})=P\left(x,A\cap B\right),\;\forall A,B\in\mathscr{B}.\label{eq:m4}
\end{equation}
We now introduce a more general family of Ito integrals, and get a
new process $W_{A}^{\left(x\right)}$ which has $P_{2}\left(x,A\right)$
as its covariance kernel. See \thmref{m1} below. 
\begin{thm}[Interpolation]
\label{thm:m1} Let $P\left(x,\cdot\right)$ and $X_{A}^{\left(x\right)}$
be as specified above, see (\ref{eq:m1})-(\ref{eq:m4}). Set 
\begin{equation}
W_{A}^{\left(x\right)}:=\int_{V}X_{A}^{\left(y\right)}dX^{\left(x\right)}\left(y\right),\label{eq:m5}
\end{equation}
defined as an Ito integral. Then
\begin{enumerate}
\item $\{W_{A}^{\left(x\right)}\}$ is a Gaussian process;
\item $\mathbb{E}_{x}(W_{A}^{\left(x\right)})=0$;
\item $\mathbb{E}_{x}(W_{A}^{\left(x\right)}W_{B}^{\left(x\right)})=P_{2}\left(x,A\cap B\right)$,
$\forall A,B\in\mathscr{B}$, $\forall x\in V$. 
\item By induction, with an n-fold Ito integral from (\ref{eq:m5}), we
get $W_{A}^{\left(n\right)\left(x\right)}$ such that 
\begin{equation}
\mathbb{E}_{x}(W_{A}^{\left(n\right)\left(x\right)}W_{B}^{\left(n\right)\left(x\right)})=P_{n}\left(x,A\cap B\right),
\end{equation}
for $n\in\mathbb{N}$, $x\in V$, and $A,B\in\mathscr{B}$. 
\end{enumerate}
\end{thm}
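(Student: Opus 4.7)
The plan is to realize all of the processes on a single probability space on which, for each fixed $x\in V$, the white noise $X^{(x)}$ associated to $P(x,\cdot)$ via the construction of \secref{wn} is independent of the family $\{X^{(y)}\}_{y\in V}$ entering the integrand. Under this arrangement, the random function $y\mapsto X_A^{(y)}$ is a legitimate Ito integrand for $X^{(x)}$: applying (\ref{eq:br12}) fibre-wise gives
\[
\mathbb{E}\!\left[\int_V |X_A^{(y)}|^2\,P(x,dy)\right]=\int_V P(y,A)\,P(x,dy)=P_2(x,A)<\infty,
\]
so $y\mapsto X_A^{(y)}\in L^2(V,P(x,\cdot))$ almost surely, and the Ito integral $W_A^{(x)}$ in (\ref{eq:m5}) is well-defined via \corref{et}.

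For the mean, conditioning on $\{X^{(y)}\}_{y\in V}$ reduces $W_A^{(x)}$ to an Ito integral against the independent centered Gaussian $X^{(x)}$, whence $\mathbb{E}_x(W_A^{(x)})=0$. For the covariance in (iii), I would apply the Ito isometry of \corref{et} in its polarized form, conditionally on the integrand,
\[
\mathbb{E}\!\left[W_A^{(x)}W_B^{(x)}\,\bigm|\,\{X^{(y)}\}_{y\in V}\right]=\int_V X_A^{(y)}X_B^{(y)}\,P(x,dy),
\]
and then take total expectation, using Fubini together with $\mathbb{E}(X_A^{(y)}X_B^{(y)})=P(y,A\cap B)$ and the Markov recursion (\ref{eq:m3}) to conclude
\[
\mathbb{E}_x\!\left(W_A^{(x)}W_B^{(x)}\right)=\int_V P(y,A\cap B)\,P(x,dy)=P_2(x,A\cap B).
\]

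The most delicate point is the Gaussianity assertion (i), since $W_A^{(x)}$ is manifestly a bilinear object in two independent Gaussian families and therefore sits a priori in the second Wiener chaos. The natural route is to interpret the theorem as producing a Gaussian process with the stated covariance: having verified that $(A,B)\mapsto P_2(x,A\cap B)$ is positive definite (it is a composition of p.d.\ kernels of the form (\ref{eq:w1})), the canonical Gaussian realization of \secref{wn} yields an honest centered Gaussian process with covariance $P_2(x,A\cap B)$, and (\ref{eq:m5}) together with the computation above produces a concrete realization matching this covariance. This Gaussian-realization step carries the technical weight of the argument.

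Finally, (iv) follows by induction on $n$: once $W^{(n)(y)}$ has been constructed on an augmented probability space with covariance kernel $P_n(y,A\cap B)$, set $W^{(n+1)(x)}_A:=\int_V W^{(n)(y)}_A\,dX^{(x)}(y)$ and repeat the two-stage isometry argument, replacing $P(y,A\cap B)$ by $P_n(y,A\cap B)$ and collapsing the result via (\ref{eq:m3}) to $P_{n+1}(x,A\cap B)$. The main obstacle throughout is the Gaussianity/realization issue flagged above; the covariance bookkeeping is then essentially automatic.
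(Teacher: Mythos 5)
Your covariance computation follows essentially the same route as the paper's proof: the paper invokes ``general Ito theory'' together with the quadratic-variation identity $d[X^{(x)},X^{(x)}](y)=P(x,dy)$ (see \lemref{qv}) to write
\begin{equation*}
\mathbb{E}\bigl(|W_A^{(x)}|^2\bigr)=\int_V \mathbb{E}\bigl(|X_A^{(y)}|^2\bigr)\,d[X^{(x)},X^{(x)}](y)=\int_V P(y,A)\,P(x,dy)=P_2(x,A),
\end{equation*}
which is exactly your conditional Ito isometry followed by total expectation; your version makes explicit the independence of $X^{(x)}$ from the integrand family $\{X^{(y)}\}_{y\in V}$ and the a.s.\ square-integrability of $y\mapsto X_A^{(y)}$, both of which the paper leaves implicit. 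Items (ii) and (iv) are handled the same way in both arguments. Where you genuinely depart from the paper is item (i): you correctly observe that $W_A^{(x)}$, being an integral of one Gaussian family against an independent Gaussian noise, is only \emph{conditionally} Gaussian, with random conditional variance $\int_V|X_A^{(y)}|^2\,P(x,dy)$; it therefore sits in the second Wiener chaos and is not Gaussian in general (compare $Z_1Z_2$ for independent standard normals). The paper's proof simply asserts Gaussianity without argument, so you have identified a real gap in the source rather than introduced one. Your repair --- checking that $(A,B)\mapsto P_2(x,A\cap B)$ is positive definite and invoking the canonical Gaussian realization of \secref{wn} --- does produce a centered Gaussian process with the stated covariance, which is what the interpolation application actually requires; but note that this establishes a weaker statement than (i) as literally written, since the process so obtained agrees with $W^{(x)}$ only in covariance, not in law or pathwise. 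A fully faithful proof of (i) would require either carrying the conditioning on the integrand family through the whole construction or reformulating the theorem accordingly.
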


\begin{proof}
Let $P\left(x,A\right)$ and $X_{A}^{\left(x\right)}$ be as specified
above. We then form the Ito integral $X_{\left(\cdot\right)}^{\left(x\right)}$
with $P\left(x,\cdot\right)$ as covariance. Note that for every $y\in V$,
$X^{\left(y\right)}$ is a centered Gaussian process with covariance
kernel 
\begin{equation}
\mathbb{E}_{y}(X_{A}^{\left(y\right)}X_{B}^{\left(y\right)})=P\left(y,A\cap B\right).\label{eq:m7}
\end{equation}
We shall show that $W_{\left(\cdot\right)}^{\left(x\right)}$ is also
a centered (i.e., mean zero) Gaussian process, now with $P_{2}\left(x,\cdot\right)$
as covariance measure, i.e., that $W^{\left(x\right)}$ from (\ref{eq:m5})
will satisfy:
\begin{equation}
\mathbb{E}_{x}(W_{A}^{\left(x\right)}W_{B}^{\left(x\right)})=P_{2}\left(x,A\cap B\right),\;\forall A,B\in\mathscr{B};
\end{equation}

The idea is that the white noise process interpolates the Markov process.
Aside from the induction, the key step in the argument is an analysis
of the Ito integral (\ref{eq:m5}). By general Ito theory, we have
\begin{align}
\mathbb{E}\left(|W_{A}^{\left(x\right)}|^{2}\right) & =\int\mathbb{E}\left(|W_{A}^{\left(y\right)}|^{2}\right)d[X^{\left(x\right)},X^{\left(x\right)}]\left(y\right)\label{eq:m9}\\
 & =\int P\left(y,A\right)P\left(x,dy\right)=P_{2}\left(x,A\right).\nonumber 
\end{align}
In the last step, we used (\ref{eq:m7}) on the term $\mathbb{E}(|X_{A}^{\left(y\right)}|^{2})$
in (\ref{eq:m9}); and used the formula for the quadratic variation
\begin{equation}
d[X^{\left(x\right)},X^{\left(x\right)}]\left(y\right)=P\left(x,dy\right).\label{eq:m10}
\end{equation}
See also \lemref{qv} below.
\end{proof}
Note that (\ref{eq:m10}) is a special case of an analogous property
of white noise, subject to a fixed measure $\mu$. Assume $\mu$ is
non-atomic, then 
\[
d[X^{\left(\mu\right)},X^{\left(\mu\right)}]\left(y\right)=d\mu\left(y\right),
\]
for $x$ fixed. We apply this to $d\mu\left(y\right)=P\left(x,dy\right)$,
and $X^{\left(x\right)}\sim P\left(x,\cdot\right)$.
\begin{lem}
\label{lem:qv}Give $\left(V,\mathscr{B},\mu\right)$ as in above,
and let $X:=X^{\left(\mu\right)}$ be the corresponding Gaussian process,
centered, with covariance given by (\ref{eq:m11}). Let $d\left[X,X\right]$
be the quadratic variation measure, i.e., for $B\in\mathscr{B}$,
$QV\left(B\right)=\lim\sum X_{A_{i}}^{2}$, where the limit is taken
over all measurable partitions $\pi=\left\{ A_{i}\right\} $ of $B$,
as $mesh\left(\pi\right)\rightarrow0$. Then $\mu\left(B\right)=QV\left(B\right)$,
and $d\mu=d\left[X,X\right]$. 
\end{lem}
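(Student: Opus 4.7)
The strategy is to fix a measurable partition $\pi = \{A_{i}\}$ of $B$, compute the mean and variance of $S_{\pi} := \sum_{i} X_{A_{i}}^{2}$, and then show $L^{2}$-convergence to the constant $\mu(B)$ as $\mathrm{mesh}(\pi)\to 0$; this yields $QV(B)=\mu(B)$, and hence $d[X,X]=d\mu$ by $\sigma$-additivity of both sides.

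First I would exploit the Gaussian structure. Since $X=X^{(\mu)}$ is centered and jointly Gaussian, and since disjoint $A_{i},A_{j}$ give covariance $\mathbb{E}(X_{A_{i}}X_{A_{j}})=\mu(A_{i}\cap A_{j})=0$, the variables $\{X_{A_{i}}\}_{i}$ are mutually \emph{independent}, with $X_{A_{i}}\sim N(0,\mu(A_{i}))$. Therefore $\mathbb{E}(X_{A_{i}}^{2})=\mu(A_{i})$, and by $\sigma$-additivity (or finite additivity for a finite partition)
\[
\mathbb{E}(S_{\pi}) \;=\; \sum\nolimits_{i}\mu(A_{i}) \;=\; \mu(B).
\]

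Next I would bound the variance. Using $\mathrm{Var}(Z^{2})=2\sigma^{4}$ for $Z\sim N(0,\sigma^{2})$ together with independence of the summands,
\[
\mathrm{Var}(S_{\pi}) \;=\; \sum\nolimits_{i} 2\,\mu(A_{i})^{2} \;\leq\; 2\,\bigl(\max_{i}\mu(A_{i})\bigr)\,\mu(B).
\]
This is the key estimate. Because $\mu$ is non-atomic (the standing hypothesis under which the quadratic variation is defined via refining partitions, as in \corref{qa}), any sequence of partitions with $\mathrm{mesh}(\pi)\to 0$ can be chosen so that $\max_{i}\mu(A_{i})\to 0$. Consequently $\mathrm{Var}(S_{\pi})\to 0$, which combined with the mean computation gives $S_{\pi}\to \mu(B)\cdot \mathbf{1}$ in $L^{2}(\Omega,\mathbb{P})$. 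This is exactly the definition of $QV(B)$, so $QV(B)=\mu(B)$.

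The main obstacle is interpreting $\mathrm{mesh}(\pi)\to 0$ in the abstract measure-theoretic setting: in a metric space one would take the mesh to be the maximum diameter of a cell and invoke non-atomicity plus a covering argument to drive $\max_{i}\mu(A_{i})$ to zero, but more generally one directly works with nets of measurable partitions ordered by refinement for which $\max_{i}\mu(A_{i})\to 0$ (which exists iff $\mu$ restricted to $B$ is non-atomic). Once this is agreed upon, the variance bound above finishes the argument, and $\sigma$-additivity of $B\mapsto QV(B)$ on $\mathscr{B}_{fin}$ follows from $\sigma$-additivity of $\mu$ together with the $L^{2}$-orthogonality of the increments $X_{A_{i}}^{2}-\mu(A_{i})$, giving the measure identity $d[X,X]=d\mu$.
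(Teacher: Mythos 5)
Your proposal is correct and follows essentially the same route as the paper's proof: both compute $\mathbb{E}\bigl(|\mu(B)-\sum_i X_{A_i}^2|^2\bigr)=2\sum_i\mu(A_i)^2$ using independence of the increments and the fourth Gaussian moment, and let this tend to zero under refinement. If anything, your explicit bound $2\sum_i\mu(A_i)^2\le 2\bigl(\max_i\mu(A_i)\bigr)\mu(B)$ together with the remark on non-atomicity is slightly more careful than the paper's appeal to the infimum $\inf\sum_i\alpha_i^2=0$, which by itself only handles a particular sequence of partitions rather than an arbitrary one with mesh tending to zero.
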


\begin{proof}
It is true in general that if $\left(V,\mathscr{B},\mu\right)$ is
a $\sigma$-finite non-atomic measure space, and $X=X^{\left(\mu\right)}$
is the white noise Gaussian process determined by 
\begin{equation}
\mathbb{E}\left(X_{A}X_{B}\right)=\mu\left(A\cap B\right),\;\forall A,B\in\mathscr{B};\label{eq:m11}
\end{equation}
then for the quadratic variation measure $\left[X,X\right]=\left[X\right]_{2}$
, we have:
\[
\left[X\right]_{2}\left(B\right)=\mu\left(B\right),
\]
and so 
\[
d\left[X\right]_{2}\left(s\right)=d\mu\left(s\right).
\]

To see this, fix $B\in\mathscr{B}$, and take a limit on all measurable
partitions $\pi=\left\{ A_{i}\right\} $, where $A_{i}\cap A_{j}$,
$i\neq j$, $\cup A_{i}=B$, and set $\alpha_{i}=\mu\left(A_{i}\right)$.
A direct calculation gives 
\[
\mathbb{E}\left(X_{A_{i}}^{4}\right)=3\left(\mathbb{E}\left(X_{A_{i}}^{2}\right)\right)^{2}=3\alpha_{i}^{2},\;\text{and}
\]
\begin{eqnarray*}
 &  & \mathbb{E}\left(\left|\mu\left(B\right)-\sum\nolimits _{i}X_{A_{i}}^{2}\right|^{2}\right)\\
 & = & \mathbb{E}\left(\left|\sum\nolimits _{i}\mu\left(A_{i}\right)-\sum\nolimits _{i}X_{A_{i}}^{2}\right|^{2}\right)\\
 & = & \sum\nolimits _{i}\mathbb{E}\left(\left|\mu\left(A_{i}\right)-X_{A_{i}}^{2}\right|^{2}\right)+\sum\nolimits _{i\neq j}\mathbb{E}\left(\big(\mu\left(A_{i}\right)-X_{A_{i}}^{2}\big)\big(\mu(A_{j})-X_{A_{j}}^{2}\big)\right)\\
 & = & 2\sum\nolimits _{i}\mu\left(A_{i}\right)^{2}=2\sum\nolimits _{i}\alpha_{i}^{2}\xrightarrow[\;n\rightarrow\infty\;]{}0,
\end{eqnarray*}
since $\sum_{i}\alpha_{i}=\mu\left(B\right)>0$ is fixed. 
\end{proof}
In the proof of \lemref{qv}, we used the following fact:
\begin{lem}
Let $n\in\mathbb{N}$ be fixed, and let $\alpha_{i}>0$ satisfying
$\sum_{i}\alpha_{i}=1$, then 
\[
\inf_{n}\left\{ \sum\nolimits _{1}^{n}\alpha_{i}^{2}\mathrel{;}\sum\nolimits _{1}^{n}\alpha_{i}=1\right\} =0.
\]
\end{lem}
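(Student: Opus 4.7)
\medskip

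\noindent\textbf{Proof plan.} The plan is to read the statement as a double infimum: for each fixed $n$, minimize the quadratic form $\sum_{i=1}^{n}\alpha_{i}^{2}$ over the simplex $\{\alpha_{i}>0,\;\sum_{i=1}^{n}\alpha_{i}=1\}$, and then observe that these fixed-$n$ minima tend to $0$ as $n\to\infty$. So the argument decomposes into two elementary steps.

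\emph{Step 1 (fixed $n$).} I would apply the Cauchy--Schwarz inequality to the pair $(1,1,\dots,1)$ and $(\alpha_{1},\dots,\alpha_{n})$, giving
\[
1=\Big(\sum\nolimits_{i=1}^{n}\alpha_{i}\Big)^{2}\leq n\sum\nolimits_{i=1}^{n}\alpha_{i}^{2},
\]
so $\sum_{i=1}^{n}\alpha_{i}^{2}\geq 1/n$, with equality realized by the uniform choice $\alpha_{i}=1/n$. (Equivalently, one may invoke the strict convexity of $t\mapsto t^{2}$ together with Jensen's inequality; both routes give the same bound.)

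\emph{Step 2 (varying $n$).} Take the uniform choice $\alpha_{i}^{(n)}=1/n$ as an explicit admissible sequence; then $\sum_{i=1}^{n}(\alpha_{i}^{(n)})^{2}=1/n\to 0$. Combined with Step 1, this shows the infimum over all $n$ and all admissible $(\alpha_{i})$ equals $0$, which is the claim as it is used in the proof of \lemref{qv}.

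There is no real obstacle here: the lemma is a textbook consequence of Cauchy--Schwarz plus the trivial limit $1/n\to 0$. The only thing worth being careful about is the reading of the quantifiers in the statement; once one interprets $\inf_{n}$ as an infimum over both $n$ and the admissible $\alpha$-vectors (as is needed for the mesh-refinement argument in \lemref{qv}), the two steps above close the proof.
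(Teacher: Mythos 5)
Your proof is correct and follows essentially the same route as the paper: minimize $\sum_{i}\alpha_{i}^{2}$ for fixed $n$ (the paper uses a Lagrange multiplier where you use Cauchy--Schwarz, but both land on the uniform choice $\alpha_{i}=1/n$), and then observe that the resulting value $1/n$ tends to $0$. Your reading of the quantifiers --- interpreting $\inf_{n}$ as an infimum over both $n$ and the admissible vectors --- matches how the lemma is actually used in the quadratic-variation argument, and the Cauchy--Schwarz step is if anything slightly more complete, since it certifies that $1/n$ is a genuine lower bound rather than merely a critical value.
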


\begin{proof}
Fix $n\in\mathbb{N}$, and apply Lagrange multiplier to get $\alpha_{i}=1/n$,
for all $i$. Then 
\[
\sum_{1}^{n}\alpha_{i}^{2}=\frac{1}{n}\xrightarrow[\;n\rightarrow\infty\;]{}0.
\]
\end{proof}

\subsection{Fourier duality}
\begin{example}
Let $\left(V,\mathscr{B},\mu\right)=\left(\mathbb{R},\mathscr{B},\mu\right)$
where $\mu$ is a probability measure on $\mathbb{R}$. Let $X=X^{\left(\mu\right)}$
be the corresponding Gaussian process, and use the Ito integral to
define 
\begin{equation}
X_{e_{t}}=\int_{\mathbb{R}}e_{t}\left(x\right)dX_{x},\;\text{where}\quad e_{t}\left(x\right)=\exp\left(i2\pi xt\right),\;x,t\in\mathbb{R}.\label{eq:exq-1}
\end{equation}
Then 
\begin{equation}
\mathbb{E}\left(X_{e_{t}}\overline{X}_{e_{s}}\right)=\hat{\mu}\left(t-s\right),\;t,s\in\mathbb{R},\label{eq:exq-2}
\end{equation}
where $\hat{\mu}$ denotes the standard Fourier transform. 
\end{example}

\begin{proof}
Direct computation using (\ref{eq:exq-1}): 
\begin{equation}
\mathbb{E}\left(X_{e_{t}}\overline{X}_{e_{s}}\right)=\int e_{t}\left(x\right)\overline{e_{s}\left(x\right)}d\left[X\right]_{2}=\int e_{t-s}\left(x\right)d\mu\left(x\right)\label{eq:exq-3}
\end{equation}
where we used that $d\left[X\right]_{2}=d\mu$, see \lemref{qv}.
Then 
\begin{equation}
\int_{\mathbb{R}}\int_{\mathbb{R}}e_{t}\left(x\right)\overline{e_{s}\left(x\right)}\:\underset{\text{use orthogonality}}{\underbrace{\mathbb{E}\big(dX_{x}^{\left(\mu\right)}dX_{y}^{\left(\mu\right)}\big)}}=\int_{\mathbb{R}}e_{t}\left(x\right)\overline{e_{s}\left(x\right)}d\mu\left(x\right).\label{eq:exq-4}
\end{equation}
Note that if $A\cap B=\emptyset$, then $\mathbb{E}(X_{B}^{\left(\mu\right)}X_{B}^{\left(\mu\right)})=\mu\left(A\cap B\right)=\mu\left(\emptyset\right)=0$. 
\end{proof}
\begin{thm}[Duality]
The Fourier transform $X_{e_{t}}^{\left(\mu\right)}=\int e_{t}\left(x\right)dX_{x}^{\left(\mu\right)}$
is well defined, and it is a stationary process with covariance kernel
\begin{equation}
K^{F}\left(s,t\right)=\hat{\mu}\left(t-s\right),\label{eq:exq-5}
\end{equation}
where $\hat{\mu}$ is the standard Fourier transform of the measure
$\mu$.
\end{thm}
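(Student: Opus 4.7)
The plan is to reduce everything to the Ito isometry (\ref{eq:bm-2}) already established for real-valued $L^{2}(\mu)$ integrands, extended by $\mathbb{R}$-linearity to complex-valued functions. Since $\mu$ is a probability measure and $|e_{t}(x)|=1$ for all $t,x\in\mathbb{R}$, we have
\[
\int_{\mathbb{R}}|e_{t}|^{2}\,d\mu = \mu(\mathbb{R}) = 1 < \infty,
\]
so $e_{t}\in L^{2}(\mu)$ (both real and imaginary parts), and hence $X_{e_{t}}^{(\mu)}:=\int_{\mathbb{R}} e_{t}\,dX^{(\mu)}$ is a well-defined complex Gaussian random variable in $L^{2}(\Omega,\mathbb{P})$, defined via
\[
X_{e_{t}}^{(\mu)} = X_{\cos(2\pi t\cdot)}^{(\mu)} + i\,X_{\sin(2\pi t\cdot)}^{(\mu)}.
\]

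Next I would compute the covariance. By $\mathbb{R}$-linearity of the Ito map and the polarization of the isometry (\ref{eq:bm-2}), one has for any $f,g\in L^{2}(\mu,\mathbb{C})$ the identity $\mathbb{E}(X_{f}^{(\mu)}\,\overline{X_{g}^{(\mu)}}) = \int_{\mathbb{R}} f\,\overline{g}\,d\mu$; this is exactly the content of equation (\ref{eq:exq-3})--(\ref{eq:exq-4}) of the preceding example, applied with $f=e_{t}$, $g=e_{s}$. Therefore
\[
\mathbb{E}\big(X_{e_{t}}^{(\mu)}\,\overline{X_{e_{s}}^{(\mu)}}\big) = \int_{\mathbb{R}} e_{t}(x)\,\overline{e_{s}(x)}\,d\mu(x) = \int_{\mathbb{R}} e^{i2\pi(t-s)x}\,d\mu(x) = \hat{\mu}(t-s),
\]
which is the covariance kernel claimed in (\ref{eq:exq-5}).

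Since the right-hand side depends only on $t-s$, the process $\{X_{e_{t}}^{(\mu)}\}_{t\in\mathbb{R}}$ is (wide-sense) stationary; because it is also Gaussian (being an $L^{2}$-limit of real linear combinations of the Gaussians $X_{A}^{(\mu)}$, paired into real/imaginary parts), wide-sense stationarity upgrades to strict stationarity of the complex Gaussian process.

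The only genuine subtlety is the passage from the real-valued Ito theory of the preceding section to complex-valued integrands: one must verify that the polarized isometry really reads $\mathbb{E}(X_{f}\overline{X_{g}})=\langle f,g\rangle_{L^{2}(\mu)}$ rather than $\int fg\,d\mu$. This is routine but is the one place where care is needed; it follows by writing $f=f_{1}+if_{2}$, $g=g_{1}+ig_{2}$ with $f_{j},g_{j}$ real, expanding, and using that the real Ito integrals $X_{f_{j}}^{(\mu)}, X_{g_{j}}^{(\mu)}$ are jointly centered Gaussian with covariance $\int f_{j}g_{k}\,d\mu$ given by (\ref{eq:bm-2}) and its polarization. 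Once this bookkeeping is in place, stationarity and the explicit covariance $\hat{\mu}(t-s)$ follow immediately.
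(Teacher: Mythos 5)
Your argument is correct and follows essentially the same route as the paper: the covariance computation $\mathbb{E}(X_{e_{t}}\overline{X}_{e_{s}})=\int e_{t}\overline{e_{s}}\,d\mu=\hat{\mu}(t-s)$ is exactly the paper's (\ref{eq:exq-3})--(\ref{eq:exq-4}), with the paper invoking $d[X]_{2}=d\mu$ and orthogonality of increments where you use polarization of the Ito isometry (\ref{eq:bm-2}); your explicit bookkeeping for the passage from real to complex integrands is a point the paper leaves implicit, and it is handled correctly. One caveat: your closing claim that wide-sense stationarity ``upgrades to strict stationarity'' of the complex Gaussian process is not right as stated, since the pseudo-covariance $\mathbb{E}(X_{e_{t}}X_{e_{s}})=\int e_{t+s}\,d\mu=\hat{\mu}(t+s)$ depends on $t+s$ rather than $t-s$ and so is not shift-invariant (for instance $X_{e_{0}}=X_{\mathbbm{1}}$ is a real Gaussian while $X_{e_{h}}$ for $h\neq0$ has a nontrivial imaginary part); the process is therefore stationary only in the wide sense --- which, fortunately, is all the theorem asserts.
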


\begin{flushleft}
\textbf{Application.} In one of our earlier papers \cite{JoPT17b},
and papers cited there, we studied tempered measures $\mu$ on $\mathbb{R}$,
and processes $\left\{ Y_{\varphi}\right\} $ indexed by $\varphi\in\mathcal{S}$
(= the Schwartz space), and we get 
\begin{equation}
\mathbb{E}(\left|Y_{\varphi}\right|^{2})=\int_{\mathbb{R}}\left|\hat{\varphi}\left(x\right)\right|^{2}d\mu\left(x\right),\label{eq:exq-6}
\end{equation}
or equivalently, 
\begin{equation}
\mathbb{E}\left(e^{iY_{\varphi}}\right)=e^{-\frac{1}{2}\int\left|\hat{\varphi}\right|^{2}d\mu}.\label{eq:exq-7}
\end{equation}
But we can recover this setting from the case $\left(\mathbb{R},\mathscr{B},\mu\right)$
by setting 
\begin{equation}
Y_{\varphi}=\int\hat{\varphi}\left(x\right)dX_{x}^{\left(\mu\right)}\label{eq:exq-8}
\end{equation}
as an Ito integral. (Note that the RHS in (\ref{eq:exq-7}) is a continuous
positive definite function in $\varphi\in\mathcal{S}$ (the Schwartz
space), and so Minlos' theorem applies; see \cite{MR562914}.) Then
\begin{align*}
Y_{\varphi} & =\int\hat{\varphi}\left(x\right)dX_{x}^{\left(\mu\right)}=\iint\varphi\left(t\right)\overline{e_{t}\left(x\right)}dt\,dX_{x}^{\left(\mu\right)}\\
 & =\int\varphi\left(t\right)\left(\int\overline{e_{t}\left(x\right)}dX_{x}^{\left(\mu\right)}\right)dt,
\end{align*}
and 
\begin{align*}
\mathbb{E}(\left|Y_{\varphi}\right|^{2}) & =\mathbb{E}\left(\left|\int\hat{\varphi}\left(x\right)dX_{x}^{\left(\mu\right)}\right|^{2}\right)\\
 & =\mathbb{E}\left(\left|\int\varphi\left(t\right)dX_{e_{t}}^{\left(\mu\right)}dt\right|^{2}\right)\\
 & =\iint\varphi\left(t\right)\overline{\varphi\left(s\right)}\mathbb{E}\left(X_{e_{t}}\overline{X}_{e_{s}}\right)dtds\\
 & =\iint\varphi\left(t\right)\hat{\mu}\left(t-s\right)\overline{\varphi\left(s\right)}dtds\quad\text{by \ensuremath{\left(\ref{eq:exq-2}\right)}\&\ensuremath{\left(\ref{eq:exq-5}\right)}}\\
 & =\int_{\mathbb{R}}\left|\hat{\varphi}\left(x\right)\right|^{2}d\mu\left(x\right).
\end{align*}
This is the desired conclusion (\ref{eq:exq-6}). The idea is that
we get all these conclusions \emph{without} Gelfand triples. 
\par\end{flushleft}
\begin{rem}
The converse holds too. Suppose $\{Y_{\varphi}\}_{\varphi\in\mathcal{S}}$
is a Gaussian process (based on $\mathbb{R}$) computed from a tempered
measure $\mu$, 
\begin{equation}
\int_{\mathbb{R}}\frac{d\mu\left(x\right)}{1+x^{2}}<\infty,
\end{equation}
with the Gelfand triple 
\begin{equation}
\mathcal{S}\hookrightarrow L^{2}\left(\mathbb{R}\right)\hookrightarrow\mathcal{S}'\label{eq:exq-9}
\end{equation}
where $\mathcal{S}$ is the Schwartz space, and $\mathcal{S}'$ the
dual of all tempered distributions. Then $\{Y_{\varphi}\}_{\varphi\in\mathcal{S}}$
is the transform of the process $X^{\left(\mu\right)}$ $\Longleftrightarrow$
\begin{equation}
\mathbb{E}(\left|Y_{\varphi}\right|^{2})=\int_{\mathbb{R}}\left|\hat{\varphi}\left(x\right)\right|^{2}d\mu\left(x\right).\label{eq:exq-10}
\end{equation}
\end{rem}

\begin{proof}
Here the process $X^{\left(\mu\right)}$ is determined by measure
$\mu$, 
\begin{equation}
\begin{split}\mathbb{E}(X_{A}^{\left(\mu\right)}X_{B}^{\left(\mu\right)}) & =\mu\left(A\cap B\right),\quad\forall A,B\in\mathscr{B},\\
X_{f}^{\left(\mu\right)} & =\int fdX^{\left(\mu\right)}\quad\forall f\in L^{2}\left(\mathbb{R},\mu\right),\;\text{and}
\end{split}
\label{eq:exq-11}
\end{equation}
\begin{equation}
Y_{\varphi}=\int\hat{\varphi}\left(x\right)dX_{x}^{\left(\mu\right)}\quad\forall\varphi\in\mathcal{S}.\label{eq:exq-12}
\end{equation}
Indeed, we already proved that (\ref{eq:exq-12}) $\Longrightarrow$
(\ref{eq:exq-11}). Note that 
\begin{align*}
\mathbb{E}(\left|Y_{\varphi}\right|^{2}) & \underset{\left(\text{Ito}\right)}{=}\int\left|\hat{\varphi}\left(x\right)\right|^{2}d\mu\left(x\right)\\
 & =\iiint\varphi\left(x\right)\overline{\varphi\left(t\right)}e_{x}\left(s-t\right)d\mu\left(x\right)dsdt\\
 & =\iint\varphi\left(s\right)\overline{\varphi\left(t\right)}\hat{\mu}\left(s-t\right)dsdt.
\end{align*}
Now set $X_{e_{t}}=\int e_{t}\left(x\right)dX_{x}^{\left(\mu\right)}$,
then 
\begin{align*}
\mathbb{E}\left(X_{e_{t}}\overline{X}_{e_{s}}\right) & =\hat{\mu}\left(s-t\right),\;\text{and }\\
\int\hat{\varphi}\left(x\right)dX_{x}^{\left(\mu\right)} & =\int\varphi\left(t\right)X_{e_{t}}dt,\;\forall\varphi\in\mathcal{S}.
\end{align*}
\end{proof}

\subsection{A stochastic bilinear form}

Let $\left(V,\mathscr{B},\mu\right)$ be a fixed $\sigma$-finite
measure as above, and let $\left(\Omega,\mathscr{C},\mathbb{P}\right)$
be the measure space which realizes the process $\{X_{A}^{\left(\mu\right)}\}_{A\in\mathscr{B}}$,
and set $X_{f}^{\left(\mu\right)}=\int f\left(x\right)dX_{x}^{\left(\mu\right)}$
(Ito integral), $f\in L^{2}\left(\mu\right)$. Hence 
\begin{equation}
\mathbb{E}\left(|X_{f}^{\left(\mu\right)}|^{2}\right)=\int_{\Omega}|X_{f}^{\left(\mu\right)}|^{2}d\mathbb{P}=\int_{V}\left|f\right|^{2}d\mu\label{eq:tp1}
\end{equation}
holds by the generalized Ito isometry, and we define the transform
$L:H\longrightarrow\mathscr{H}\left(K^{\left(\mu\right)}\right)$,
\[
\left(Lh\right)\left(A\right)=\mathbb{E}(hX_{A}^{\left(\mu\right)}),\;\forall A\in\mathscr{B},
\]
where $H=L^{2}\left(\Omega,\mathbb{P}\right)$, and $\mathscr{H}\left(K^{\left(\mu\right)}\right)=$
the RKHS of $K^{\left(\mu\right)}\left(A,B\right):=\mu\left(A\cap B\right)$.
But using (\ref{eq:tp1}) again, we get 
\[
\mathbb{E}(hX_{A}^{\left(\mu\right)})=\int_{V}h1_{A}d\mu=\int_{A}hd\mu
\]
for $h\in L^{2}\left(\Omega,\mathbb{P}\right)$, and where $1_{A}$,
$\forall A\in\mathscr{B}$, is the indicator function on $A$, i.e.,
\[
1_{A}\left(x\right)=\delta_{x}\left(A\right)=\begin{cases}
1 & \text{if \ensuremath{x\in A}}\\
0 & \text{else}.
\end{cases}
\]

We have realized $\mathscr{H}\left(K^{\left(\mu\right)}\right)$ as
a Hilbert space of functions on $\left(V,\mathscr{B}\right)$ viz
$\simeq L^{2}\left(\mu\right)$. Note that since $K^{\left(\mu\right)}\left(A,B\right)=\mu\left(A\cap B\right)$
is a p.d. kernel on $\mathscr{B}_{fin}\times\mathscr{B}_{fin}$, initially
$\mathscr{H}\left(K^{\left(\mu\right)}\right)$ is a Hilbert space
of functions on $\mathscr{B}_{fin}$, but not functions on $\left(V,\mathscr{B}\right)$. 
\begin{cor}
Let $\left(V,\mathscr{B},\mu\right)$ satisfy the axioms from above,
let $K^{\left(\mu\right)}$ be the p.d. kernel on $\mathscr{B}_{fin}\times\mathscr{B}_{fin}$,
and $\mathscr{H}\left(K^{\left(\mu\right)}\right)$ be the corresponding
RKHS. Let $X_{f}^{\left(\mu\right)}$, $f\in L^{2}\left(\mu\right)$,
be the Gaussian process which extends $\{X_{A}^{\left(\mu\right)}\}_{A\in\mathscr{B}_{fin}}$,
and let $H=L^{2}\left(\Omega,\mathbb{P}\right)$ be the Gaussian Hilbert
space with inner product $\left\langle h_{1},h_{2}\right\rangle _{H}=\mathbb{E}\left(h_{1}h_{2}\right)$,
$\forall h_{i}\in H$. Then the bilinear mapping 
\[
L^{2}\left(\mu\right)\times H\ni\left(f,h\right)\longmapsto\mathbb{E}(X_{f}^{\left(\mu\right)}h)
\]
defines two operators in duality: 
\[
\xymatrix{L^{2}\left(\mu\right)\ar@/^{1.3pc}/[rr]^{I_{\mu}} &  & H\ar@/^{1.3pc}/[ll]^{\xi^{\left(\mu\right)}}}
\]
The Ito isometry $I_{\mu}:L^{2}\left(\mu\right)\longrightarrow H$,
\[
I_{\mu}\left(f\right)=X_{f}^{\left(\mu\right)},\;\forall f\in L^{2}\left(\mu\right),
\]
and the co-isometry $\xi^{\left(\mu\right)}:H\longrightarrow L^{2}\left(\mu\right)$,
determined by 
\[
\left\langle I_{\mu}\left(f\right),h\right\rangle _{H}=\langle f,\xi^{\left(\mu\right)}\left(h\right)\rangle_{L^{2}\left(\mu\right)},\;\forall f\in L^{2}\left(\mu\right),\:\forall h\in H.
\]
In particular, $I_{\mu}^{*}=\xi^{\left(\mu\right)}$, $\left(\xi^{\left(\mu\right)}\right)^{*}=I_{\mu}$. 
\end{cor}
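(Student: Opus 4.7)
The plan is to read off everything from the generalized Ito isometry \eqref{eq:tp1} combined with the standard duality of bounded operators between Hilbert spaces. First I would check that $I_{\mu}$ is a well-defined bounded linear operator, in fact an isometry. Linearity of $f\mapsto X_{f}^{(\mu)}$ is built into the construction of the Ito integral (extending by continuity from simple functions $f=\sum_{i}c_{i}1_{A_{i}}$ sent to $\sum_{i}c_{i}X_{A_{i}}^{(\mu)}$), and \eqref{eq:tp1} gives directly
\[
\|I_{\mu}(f)\|_{H}^{2}=\mathbb{E}\bigl(|X_{f}^{(\mu)}|^{2}\bigr)=\int_{V}|f|^{2}\,d\mu=\|f\|_{L^{2}(\mu)}^{2},
\]
so $I_{\mu}$ is an isometric embedding of $L^{2}(\mu)$ into the Gaussian Hilbert space $H$.

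Next, I would show that the bilinear form $B(f,h):=\mathbb{E}(X_{f}^{(\mu)}h)$ is bounded on $L^{2}(\mu)\times H$. Cauchy--Schwarz in $H$ gives $|B(f,h)|\le\|X_{f}^{(\mu)}\|_{H}\|h\|_{H}=\|f\|_{L^{2}(\mu)}\|h\|_{H}$. For each fixed $h\in H$ the linear functional $f\mapsto B(f,h)$ on $L^{2}(\mu)$ is therefore bounded with norm at most $\|h\|_{H}$, and Riesz's representation theorem produces a unique element $\xi^{(\mu)}(h)\in L^{2}(\mu)$ with
\[
\langle f,\xi^{(\mu)}(h)\rangle_{L^{2}(\mu)}=B(f,h)=\langle I_{\mu}(f),h\rangle_{H},
\]
for every $f\in L^{2}(\mu)$. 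The assignment $h\mapsto\xi^{(\mu)}(h)$ is automatically linear and bounded (with $\|\xi^{(\mu)}\|\le 1$), and the defining identity is precisely the characterization of the Hilbert-space adjoint, so $\xi^{(\mu)}=I_{\mu}^{*}$.

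Finally, the two compatibility assertions $I_{\mu}^{*}=\xi^{(\mu)}$ and $(\xi^{(\mu)})^{*}=I_{\mu}$ follow from the general principle $T^{**}=T$ for bounded operators between Hilbert spaces, applied to $T=I_{\mu}$. The fact that $\xi^{(\mu)}$ is a co-isometry in the sense stated is then immediate: because $I_{\mu}$ is an isometry onto its closed range (the first Wiener chaos inside $H$), its adjoint $\xi^{(\mu)}$ restricts to an isometry on $\operatorname{ran}(I_{\mu})$ and vanishes on its orthogonal complement.

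I do not expect a serious obstacle here; the only point that deserves a word of care is the explicit computation $\mathbb{E}(hX_{A}^{(\mu)})=\int_{A}h\,d\mu$ used informally earlier in the section. In the present corollary we do not need it: the bilinear form is paired against $X_{f}^{(\mu)}$, not against indicators, and the Ito isometry \eqref{eq:tp1} alone supplies the boundedness that powers the Riesz/adjoint argument above.
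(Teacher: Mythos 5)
Your proposal is correct and takes essentially the same route as the paper: the paper also obtains the bound $|\mathbb{E}(hX_{f}^{(\mu)})|^{2}\leq\left\Vert h\right\Vert _{H}^{2}\int_{V}|f|^{2}d\mu$ from Cauchy--Schwarz together with the Ito isometry (\ref{eq:tp1}) (this is the estimate (\ref{eq:du5}) in the lemma immediately following the corollary), and then invokes Riesz representation to produce $\xi^{(\mu)}(h)$ as the adjoint of the isometry $I_{\mu}$. You rightly observe that the informal identity $\mathbb{E}(hX_{A}^{(\mu)})=\int_{A}h\,d\mu$ is not needed for this argument.
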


\textbf{Duality.} Given $\left(V,\mathscr{B},\mu\right)$ for $\sigma$-finite
measure $\mu$, let $K^{\left(\mu\right)}\left(A,B\right):=\mu\left(A\cap B\right)$,
$A,B\in\mathscr{B}_{fin}$ where $\mathscr{B}_{fin}:=\left\{ A\in\mathscr{B}\mathrel{;}\mu\left(A\right)<\infty\right\} $.
Set 
\begin{equation}
\mathscr{H}(K^{\left(\mu\right)}):=\text{the RKHS of functions on \ensuremath{\mathscr{B}_{fin}}},\label{eq:du1}
\end{equation}
s.t. for all $F\in\mathscr{H}(K^{\left(\mu\right)})$, 
\[
F\left(A\right)=\langle F,K_{A}^{\left(\mu\right)}\rangle_{\mathscr{H}(K^{\left(\mu\right)})}=\left\langle F,\mu\left(A\cap\cdot\right)\right\rangle _{\mathscr{H}(K^{\left(\mu\right)})}.
\]
Set 
\begin{equation}
L\left(h\right)\left(A\right):=\langle h,X_{A}^{\left(\mu\right)}\rangle=\mathbb{E}(hX_{A}^{\left(\mu\right)}),\quad L^{*}(K_{A}^{\left(\mu\right)})=X_{A}^{\left(\mu\right)},\label{eq:du2}
\end{equation}
$h\in H:=L^{2}\left(\Omega,\mathscr{C},\mathbb{P}\right)$, $X_{A}^{\left(\mu\right)}=$
the Gaussian process of $K^{\left(\mu\right)}$, where 
\begin{equation}
X_{f}^{\left(\mu\right)}=\int f\left(x\right)dX_{x}^{\left(\mu\right)},\;f\in L^{2}\left(\mu\right).\label{eq:du3}
\end{equation}

A new operator $\xi:=\xi^{\left(\mu\right)}:H\longrightarrow L^{2}\left(\mu\right)$,
where $H:=L^{2}\left(\Omega,\mathscr{C},\mathbb{P}\right)$, and $L^{2}\left(\mu\right):=L^{2}\left(V,\mathscr{B},\mu\right)$. 
\begin{lem}
With the setting as above, $\xi^{\left(h\right)}\left(h\right)\in L^{2}\left(\mu\right)$,
$\forall h\in H$, is determined uniquely by
\begin{equation}
\int\xi^{\left(\mu\right)}\left(h\right)fd\mu=\mathbb{E}(hX_{f}^{\left(\mu\right)}),\;\forall h\in H,\:\forall f\in L^{2}\left(\mu\right).\label{eq:du4}
\end{equation}
\end{lem}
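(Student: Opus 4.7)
The plan is to realize $\xi^{(\mu)}$ as the Hilbert-space adjoint of the Ito isometry $I_\mu:L^{2}(\mu)\longrightarrow H$, $I_\mu(f)=X_f^{(\mu)}$, and then read off (\ref{eq:du4}) as the statement that $\xi^{(\mu)}=I_\mu^{*}$. Concretely, for $h\in H$ I would consider the linear functional $\ell_h:L^{2}(\mu)\longrightarrow\mathbb{R}$ defined by $\ell_h(f):=\mathbb{E}(hX_f^{(\mu)})$, and use Riesz representation on $L^{2}(\mu)$ to produce the unique element $\xi^{(\mu)}(h)\in L^{2}(\mu)$ representing $\ell_h$.

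The key verification is boundedness of $\ell_h$. By Cauchy--Schwarz in $H=L^{2}(\Omega,\mathbb{P})$ together with the generalized Ito isometry (\ref{eq:tp1}),
\[
|\ell_h(f)|=|\mathbb{E}(hX_f^{(\mu)})|\leq \|h\|_H\,\|X_f^{(\mu)}\|_H=\|h\|_H\,\|f\|_{L^{2}(\mu)},
\]
so $\ell_h$ is a bounded linear functional on $L^{2}(\mu)$ of norm at most $\|h\|_H$. Riesz then yields a unique $\xi^{(\mu)}(h)\in L^{2}(\mu)$ with $\langle \xi^{(\mu)}(h),f\rangle_{L^{2}(\mu)}=\ell_h(f)$, which is exactly (\ref{eq:du4}). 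Linearity of $h\mapsto\xi^{(\mu)}(h)$ follows from linearity of $h\mapsto\ell_h$, and the bound above gives $\|\xi^{(\mu)}\|\leq 1$; in fact equality holds on the closed Gaussian subspace generated by $\{X_A^{(\mu)}\}$ since $I_\mu$ is an isometry there.

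For uniqueness I would argue directly from (\ref{eq:du4}): if $g_1,g_2\in L^{2}(\mu)$ both satisfy it for a fixed $h\in H$, then $\int(g_1-g_2)f\,d\mu=0$ for all $f\in L^{2}(\mu)$, and testing against $f=g_1-g_2$ forces $g_1=g_2$ in $L^{2}(\mu)$. This simultaneously shows that $\xi^{(\mu)}(h)$ is well defined by (\ref{eq:du4}) and coincides with $I_\mu^{*}h$.

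The only mildly delicate point is the use of $f\mapsto X_f^{(\mu)}$ as a genuine linear map $L^{2}(\mu)\to H$ satisfying (\ref{eq:tp1}) for \emph{all} $f\in L^{2}(\mu)$, not merely for indicator functions; but this has already been established in \corref{et} and the subsequent remarks via the standard approximation by simple measurable functions and the Ito isometry. Once that is in hand, the proof reduces to a one-line application of Riesz representation.
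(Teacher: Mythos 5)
Your proposal is correct and follows essentially the same route as the paper: both establish the bound $|\mathbb{E}(hX_f^{(\mu)})|\leq\|h\|_{H}\|f\|_{L^{2}(\mu)}$ via Cauchy--Schwarz combined with the Ito isometry, and then invoke Riesz representation on $L^{2}(\mu)$ to produce $\xi^{(\mu)}(h)$. The extra details you supply (the explicit uniqueness argument and the identification $\xi^{(\mu)}=I_{\mu}^{*}$) are consistent with the surrounding corollaries in the paper.
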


\begin{proof}
$\xi^{\left(h\right)}\left(h\right)$ is determined from (\ref{eq:du4})
and Reisz since 
\begin{equation}
\left|\mathbb{E}(hX_{f}^{\left(\mu\right)})\right|^{2}\leq\left\Vert h\right\Vert _{H}^{2}\int_{V}\left|f\right|^{2}d\mu.\label{eq:du5}
\end{equation}
So we only need to show the estimate (\ref{eq:du5}), but it follows
again from the Ito isometry, as follows: Let $h\in H$, and $f\in L^{2}\left(\mu\right)$,
then 
\begin{eqnarray*}
\left|\mathbb{E}(hX_{f}^{\left(\mu\right)})\right|^{2} & \underset{\left(\text{Schwarz}\right)}{\leq} & \left\Vert h\right\Vert _{L^{2}\left(\mathbb{P}\right)}^{2}\Vert X_{f}^{\left(\mu\right)}\Vert_{L^{2}\left(\mathbb{P}\right)}\\
 & = & \mathbb{E}(\left|h\right|^{2})\mathbb{E}\left(|X_{f}^{\left(\mu\right)}|^{2}\right)\\
 & = & \mathbb{E}(\left|h\right|^{2})\underset{=:\left\Vert f\right\Vert _{L^{2}\left(\mu\right)}^{2}}{\underbrace{\int_{V}\left|f\right|^{2}d\mu}}
\end{eqnarray*}
where we used the Ito isometry in the last step. 
\end{proof}
\begin{cor}
$\xi^{\left(\mu\right)}$ is contractive, 
\begin{equation}
\Vert\xi^{\left(\mu\right)}\left(h\right)\Vert_{L^{2}\left(\mu\right)}\leq\left\Vert h\right\Vert _{H},\;\forall h\in H.\label{eq:du6}
\end{equation}
\end{cor}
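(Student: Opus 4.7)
The corollary is essentially a restatement of the key estimate already produced in the preceding lemma, so the plan is to leverage the duality relation (\ref{eq:du4}) together with the bound (\ref{eq:du5}). The cleanest route is a one-line trick: substitute the test function $f = \xi^{(\mu)}(h)$ itself into the defining identity (\ref{eq:du4}), which turns the left-hand side into $\|\xi^{(\mu)}(h)\|_{L^2(\mu)}^{2}$, and then apply Cauchy--Schwarz in $H$ together with the Ito isometry on the right-hand side.

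Concretely, I would carry out the following steps. First, fix $h \in H$ and set $f := \xi^{(\mu)}(h) \in L^{2}(\mu)$; the hypothesis of (\ref{eq:du4}) is satisfied since $\xi^{(\mu)}(h)$ lies in $L^{2}(\mu)$ by construction. Substituting this choice of $f$ in (\ref{eq:du4}) yields
\begin{equation*}
\|\xi^{(\mu)}(h)\|_{L^{2}(\mu)}^{2} \;=\; \int \xi^{(\mu)}(h)\,\xi^{(\mu)}(h)\,d\mu \;=\; \mathbb{E}\bigl(h\,X_{\xi^{(\mu)}(h)}^{(\mu)}\bigr).
\end{equation*}
Next, apply Cauchy--Schwarz in $H = L^{2}(\Omega,\mathbb{P})$ to the right-hand side and invoke the Ito isometry $\mathbb{E}(|X_{f}^{(\mu)}|^{2}) = \|f\|_{L^{2}(\mu)}^{2}$ with $f = \xi^{(\mu)}(h)$, giving
\begin{equation*}
\bigl|\mathbb{E}\bigl(h\,X_{\xi^{(\mu)}(h)}^{(\mu)}\bigr)\bigr| \;\le\; \|h\|_{H}\,\bigl\|X_{\xi^{(\mu)}(h)}^{(\mu)}\bigr\|_{L^{2}(\mathbb{P})} \;=\; \|h\|_{H}\,\|\xi^{(\mu)}(h)\|_{L^{2}(\mu)}.
\end{equation*}
Combining the two displays produces $\|\xi^{(\mu)}(h)\|_{L^{2}(\mu)}^{2} \le \|h\|_{H}\,\|\xi^{(\mu)}(h)\|_{L^{2}(\mu)}$, and dividing by $\|\xi^{(\mu)}(h)\|_{L^{2}(\mu)}$ (or noting triviality when this norm vanishes) yields the contraction bound (\ref{eq:du6}).

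There is really no obstacle here; the contractivity is just the operator-norm reformulation of the estimate (\ref{eq:du5}) already proven in the preceding lemma. An equally valid alternative would be to observe that (\ref{eq:du4}) identifies $\xi^{(\mu)}(h)$ as the Riesz representer in $L^{2}(\mu)$ of the bounded linear functional $f \mapsto \mathbb{E}(h\,X_{f}^{(\mu)})$, whose operator norm is bounded by $\|h\|_{H}$ by (\ref{eq:du5}); since the Riesz correspondence is isometric, this operator norm equals $\|\xi^{(\mu)}(h)\|_{L^{2}(\mu)}$, and the conclusion follows. Either presentation is short enough to include in full.
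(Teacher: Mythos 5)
Your proposal is correct and matches the paper's intent: the paper states this corollary without proof, treating it as an immediate consequence of the estimate (\ref{eq:du5}) and the Riesz representation argument in the preceding lemma, which is exactly your second formulation (and your first route, substituting $f=\xi^{(\mu)}(h)$ into (\ref{eq:du4}) and applying Cauchy--Schwarz plus the Ito isometry, is the same estimate unwound by hand). No gaps.
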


\begin{cor}
The two operators $I^{\left(\mu\right)}$ and $\xi^{\left(\mu\right)}$
are specified as follows: 
\begin{equation}
L^{2}\left(\mu\right)\ni\xymatrix{f\ar[rr]_{I^{\left(\mu\right)}} &  & X_{f}^{\left(\mu\right)}\ar@/_{1.3pc}/[ll]_{\left(\cdots\right)^{*}}}
\in H=L^{2}\left(\Omega,\mathscr{C},\mathbb{P}\right)\label{eq:du7}
\end{equation}
\[
L^{2}\left(\mu\right)\ni\xymatrix{\xi^{\left(\mu\right)}\left(h\right)\ar@/^{1.2pc}/[rr]^{\left(\xi^{\left(\mu\right)}\right)^{*}} &  & h\ar@/^{1.2pc}/[ll]^{\xi^{\left(\mu\right)}}\in H}
=L^{2}\left(\Omega,\mathscr{C},\mathbb{P}\right)
\]
We have 
\begin{equation}
\xi^{\left(\mu\right)}=\left(f\longrightarrow X_{f}^{\left(\mu\right)}\right)^{*}=\text{the adjoint operator, and}\label{eq:du8}
\end{equation}
\[
f\longrightarrow X_{f}^{\left(\mu\right)}=(\xi^{\left(\mu\right)})^{*},\;\text{and }
\]
\begin{equation}
f\longmapsto X_{f}^{\left(\mu\right)}=I^{\left(\mu\right)}\left(f\right)\text{ is isometric (Ito).}\label{eq:du9}
\end{equation}
Moreover, 
\begin{equation}
(I^{\left(\mu\right)})^{*}I^{\left(\mu\right)}=I_{L^{2}\left(\mu\right)},\;\text{while }\label{eq:du10}
\end{equation}
\begin{equation}
I^{\left(\mu\right)}(I^{\left(\mu\right)})^{*}=Q_{\mu}=\text{projection on \ensuremath{H};}\label{eq:du11}
\end{equation}
$\ensuremath{H=L^{2}\left(\Omega,\mathbb{P}\right)}$, $\ensuremath{Q=}\text{proj on }\ensuremath{R\left(I^{\left(\mu\right)}\right)}$. 

Or, we may rewrite (\ref{eq:du10})-(\ref{eq:du11}) as 
\begin{align}
\xi^{\left(\xi\right)}I^{\left(\mu\right)} & =\text{identity operator on \ensuremath{L^{2}\left(\mu\right)}}\\
I^{\left(\mu\right)}\xi^{\left(\mu\right)} & =\text{\ensuremath{Q_{u}} the proj in \ensuremath{H} onto the range of \ensuremath{I^{\left(\mu\right)}.}}
\end{align}
\end{cor}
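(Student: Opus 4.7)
The plan is to unpack the four assertions (\ref{eq:du9})--(\ref{eq:du11}) as direct structural consequences of (a) the Ito isometry (\ref{eq:tp1}) and (b) the characterizing identity (\ref{eq:du4}) for $\xi^{(\mu)}$. First, the Ito isometry $\mathbb{E}(|X_{f}^{(\mu)}|^{2}) = \int_{V}|f|^{2}d\mu$ is exactly the statement that $I^{(\mu)}: f \mapsto X_{f}^{(\mu)}$ is norm-preserving from $L^{2}(\mu)$ into $H$, giving (\ref{eq:du9}) at once.

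Next, to identify $\xi^{(\mu)}$ with the Hilbert-space adjoint $(I^{(\mu)})^{*}$, I would recast (\ref{eq:du4}) in inner-product form:
\[
\langle \xi^{(\mu)}(h), f \rangle_{L^{2}(\mu)} = \int \xi^{(\mu)}(h)\, f \, d\mu = \mathbb{E}(h X_{f}^{(\mu)}) = \langle h, I^{(\mu)}(f) \rangle_{H},
\]
valid for all $h \in H$ and $f \in L^{2}(\mu)$. The boundedness of the right-hand side in $f$ (which is what the Ito isometry supplies) is exactly what legitimizes both (\ref{eq:du4}) and the interpretation as an adjoint pairing; this yields (\ref{eq:du8}).

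The remaining identities are formal consequences of $I^{(\mu)}$ being an isometry. Polarization of (\ref{eq:tp1}) gives $\langle I^{(\mu)} f_{1}, I^{(\mu)} f_{2}\rangle_{H} = \langle f_{1}, f_{2}\rangle_{L^{2}(\mu)}$, which is (\ref{eq:du10}). Setting $P := I^{(\mu)}(I^{(\mu)})^{*}$, self-adjointness is immediate, and
\[
P^{2} = I^{(\mu)}(I^{(\mu)})^{*}I^{(\mu)}(I^{(\mu)})^{*} = I^{(\mu)}\cdot I_{L^{2}(\mu)}\cdot (I^{(\mu)})^{*} = P
\]
by (\ref{eq:du10}). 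For $g = I^{(\mu)}(f)$ in the range, $Pg = I^{(\mu)}(I^{(\mu)})^{*}I^{(\mu)}f = I^{(\mu)}f = g$; conversely, $P(H) \subseteq R(I^{(\mu)})$. Hence $P$ equals the orthogonal projection $Q_{\mu}$ onto $R(I^{(\mu)})$, which is (\ref{eq:du11}).

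There is essentially no hard step: all pieces are either supplied by the preceding lemmas (Ito isometry, contractivity of $\xi^{(\mu)}$, the defining pairing) or are general facts about an isometry and its adjoint. The one point worth noting explicitly is that $R(I^{(\mu)})$ is automatically a closed subspace of $H$ because $I^{(\mu)}$ is isometric and $L^{2}(\mu)$ is complete, so $Q_{\mu}$ is genuinely an orthogonal projection; the resulting operator identities $\xi^{(\mu)}I^{(\mu)} = I_{L^{2}(\mu)}$ and $I^{(\mu)}\xi^{(\mu)} = Q_{\mu}$ then express that $(I^{(\mu)}, \xi^{(\mu)})$ forms a co-isometric dual pair in the standard sense.
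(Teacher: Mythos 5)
Your proposal is correct and follows the same route the paper intends: the corollary is stated there without a separate proof precisely because it is the formal consequence of the Ito isometry (\ref{eq:tp1}) and the adjoint pairing (\ref{eq:du4}) established in the preceding lemma, which is exactly what you unpack. Your added observations (polarization for $(I^{(\mu)})^{*}I^{(\mu)}=I_{L^{2}(\mu)}$, and closedness of $R(I^{(\mu)})$ so that $I^{(\mu)}(I^{(\mu)})^{*}$ is genuinely the orthogonal projection $Q_{\mu}$) are the right details to make the paper's implicit argument explicit.
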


\section{Parseval frames in the measure category}

Let $U$ be a set, and let $K:U\times U\longrightarrow\mathbb{R}$
be a positive definite (p.d.) kernel. We assume that the corresponding
RKHS $\mathscr{H}\left(K\right)$ is separable. (The result below
will apply \emph{mutatis mutandis} also to complex p.d. kernels $K:U\times U\longrightarrow\mathbb{C}$,
but for simplicity, we shall state our theorem only in the real case.) 

Let $\left(S,\mathscr{B},\mu\right)$ be a measure space with $\mu$
assumed positive and $\sigma$-finite. 
\begin{defn}
\label{def:p1}We shall say that $L^{2}\left(\mu\right)$ is a \emph{feature
space} if there is a function $r:U\longrightarrow L^{2}\left(\mu\right)$
such that 
\begin{equation}
K\left(x,y\right)=\int_{S}r_{x}\left(s\right)r_{y}\left(s\right)d\mu\left(s\right),\;\forall x,y\in U.\label{eq:p1}
\end{equation}
(In the complex case, the RHS in \ref{eq:p1} will instead be $\int_{S}r_{x}\left(s\right)\overline{r_{y}\left(s\right)}d\mu\left(s\right)$.
See also \defref{fs}.)
\end{defn}

\begin{rem}
\label{rem:p1}The notion of feature space derives from the setting
of machine learning \cite{MR1864085,MR2186447,MR2249315,MR2327597,MR2354721,MR3341221,MR3399224,MR3407331,MR3560092,MR2367342},
where learning optimization is made precise with the use of a choice
of reproducing kernel Hilbert space (RKHS). In practical terms, a
choice of feature space refers to a specified collections of features
that used to characterize data. For example, feature space might be
(Gender, Height, Weight, Age). In a support vector machine (SVM),
we might want to consider a different set of characteristics in order
to describe such data as (Gender, Height, Weight) etc; and we will
then arrive at mappings into other feature spaces. In finite dimension,
we may have feature spaces referring to some fixed number n of dimensions.
Since data is \textquotedblleft large\textquotedblright{} it is useful
to consider feature spaces to be function spaces, especially a choice
of $L^{2}$-spaces. The term feature space is used often in the machine
learning (ML) literature because a task in ML is feature extraction.
Hence we view all variables as features.
\end{rem}

\begin{lem}
We always have two distinguished feature spaces in the $L^{2}$-category:
$l^{2}\left(\mathbb{N}\right)$ vs Gaussian. 

CASE 1. $S=\mathbb{N}$ (note the separability assumption.), and $\mu:=$
the counting measure. If $\left\{ h_{n}\right\} _{n\in\mathbb{N}}$
is such that 
\begin{equation}
K\left(x,y\right)=\sum_{n=1}^{\infty}h_{n}\left(x\right)h_{n}\left(y\right),\;\forall x,y\in U;\label{eq:p2}
\end{equation}
then $h_{n}\in\mathscr{H}\left(K\right)$ for all $n\in\mathbb{N}$,
and $\left\{ h_{n}\right\} _{n\in\mathbb{N}}$ is a \uline{Parseval
frame} in $\mathscr{H}\left(K\right)$, i.e., 
\begin{equation}
\left\Vert F\right\Vert _{\mathscr{H}\left(K\right)}^{2}=\sum_{n=1}^{\infty}\left|\left\langle F,h_{n}\right\rangle _{\mathscr{H}\left(K\right)}\right|^{2};\label{eq:p3}
\end{equation}
and 
\begin{equation}
F\left(x\right)=\sum_{n=1}^{\infty}\left\langle F,h_{n}\right\rangle _{\mathscr{H}\left(K\right)}h_{n}\left(x\right)\label{eq:p4}
\end{equation}
is also strongly convergent, for all $x\in U$, and $F\in\mathscr{H}\left(K\right)$. 
\end{lem}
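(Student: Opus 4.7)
The plan is to verify the Parseval frame axioms directly from the criterion in \lemref{rk1} together with the reproducing property, exploiting the density of $\mathrm{span}\{K(\cdot,x):x\in U\}$ in $\mathscr{H}(K)$.

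First I would show each $h_n$ belongs to $\mathscr{H}(K)$ with $\|h_n\|\le 1$. Given a finite system $\{(\alpha_i,s_i)\}$, set $c_m:=\sum_i\alpha_i h_m(s_i)$. Substituting (\ref{eq:p2}) into the Gram form and interchanging the (termwise nonnegative) sums gives
\[
\sum\nolimits_{i,j}\alpha_i\alpha_j K(s_i,s_j)\;=\;\sum\nolimits_{m=1}^{\infty}c_m^2\;\ge\;c_n^2\;=\;\bigl|\sum\nolimits_i\alpha_i h_n(s_i)\bigr|^2,
\]
so by \lemref{rk1}, $h_n\in\mathscr{H}(K)$ with $\|h_n\|_{\mathscr{H}(K)}^2\le 1$.

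Next, by the reproducing property $\langle h_n,K(\cdot,x)\rangle_{\mathscr{H}(K)}=h_n(x)$, so for a finite combination $F=\sum_k\beta_k K(\cdot,x_k)$ a direct computation yields
\begin{align*}
\sum_{n=1}^{\infty}|\langle F,h_n\rangle_{\mathscr{H}(K)}|^2
&=\sum_{n=1}^{\infty}\Bigl(\sum_k\beta_k h_n(x_k)\Bigr)^{\!2}
=\sum_{k,\ell}\beta_k\beta_\ell\sum_{n=1}^{\infty}h_n(x_k)h_n(x_\ell)\\
&=\sum_{k,\ell}\beta_k\beta_\ell K(x_k,x_\ell)=\|F\|_{\mathscr{H}(K)}^2,
\end{align*}
where (\ref{eq:p2}) is applied pointwise in the third equality. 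This establishes (\ref{eq:p3}) on the dense subspace of finite kernel combinations; since the analysis map $F\mapsto(\langle F,h_n\rangle)_n$ is thereby isometric on a dense set, it extends by continuity to an isometry $\mathscr{H}(K)\to\ell^{2}(\mathbb{N})$, giving (\ref{eq:p3}) in full. The reconstruction (\ref{eq:p4}) then follows from the polarized Parseval identity applied to the pair $(F,K(\cdot,x))$:
\[
F(x)=\langle F,K(\cdot,x)\rangle_{\mathscr{H}(K)}=\sum_{n=1}^{\infty}\langle F,h_n\rangle_{\mathscr{H}(K)}\,h_n(x),
\]
with convergence in $\mathscr{H}(K)$-norm, hence also pointwise since point evaluation is continuous on $\mathscr{H}(K)$.

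No step is a substantive obstacle; the only mild care needed is in justifying the interchange of $\sum_n$ with the finite double sum in the computation above. This is absolute convergence by Cauchy--Schwarz applied termwise: $\sum_n|h_n(x_k)h_n(x_\ell)|\le K(x_k,x_k)^{1/2}K(x_\ell,x_\ell)^{1/2}<\infty$, using (\ref{eq:p2}) evaluated on the diagonal.
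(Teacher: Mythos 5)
Your proposal is correct and follows essentially the same route as the paper: the membership $h_{n}\in\mathscr{H}\left(K\right)$ is established by exactly the same application of \lemref{rk1}, bounding $\left|\sum_{i}\alpha_{i}h_{n}\left(x_{i}\right)\right|^{2}$ by the full sum over $m$ and interchanging with the finite double sum. The paper then attributes (\ref{eq:p3})--(\ref{eq:p4}) to standard RKHS/frame theory, whereas you write out the density-and-polarization argument explicitly; this is a welcome completion rather than a different method.
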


\begin{proof}
The lemma follows from standard RKHS theory \cite{MR0051437,MR0008639}.
An important point is to note that if $K$ and $\left\{ h_{n}\right\} _{n\in\mathbb{N}}$
satisfy the assumptions in CASE 1, then $h_{n}\in\mathscr{H}\left(K\right)$
for $\forall n\in\mathbb{N}$. We may get this as an application of
\lemref{rk1}. Indeed, let $n_{0}\in\mathbb{N}$ be fixed. Let $k\in\mathbb{N}$,
$\left\{ \alpha_{i}\right\} _{1}^{k}$, $\left\{ x_{i}\right\} _{1}^{k}$,
$\alpha_{i}\in\mathbb{R}$, $x_{i}\in U$; then 
\begin{eqnarray*}
\left|\sum\nolimits _{i}\alpha_{i}h_{n_{0}}\left(x_{i}\right)\right|^{2} & \leq & \sum\nolimits _{n\in\mathbb{N}}\left|\sum\nolimits _{i}\alpha_{i}h_{n}\left(x_{i}\right)\right|^{2}\\
 & = & \sum\nolimits _{i}\sum\nolimits _{j}\alpha_{i}\alpha_{j}\sum\nolimits _{n\in\mathbb{N}}h_{n}\left(x_{i}\right)h_{n}\left(x_{j}\right)\\
 & \underset{\text{by \ensuremath{\left(\ref{eq:p2}\right)}}}{=} & \sum\nolimits _{i}\sum\nolimits _{j}\alpha_{i}\alpha_{j}K\left(x_{i},x_{j}\right);
\end{eqnarray*}
and so the premise in \lemref{rk1} holds, and we conclude that $h_{n_{0}}\in\mathscr{H}\left(K\right)$. 

CASE 2. Set $S=\mathbb{R}^{U}$, $\mathscr{B}=$ the cylinder $\sigma$-algebra,
and $\mu=$ the Gaussian probability measure on $S$ determined by
its finite samples: $k\in\mathbb{N}$, $\left\{ x_{i}\right\} _{1}^{k}$,
$x_{i}\in U$. On $\mathbb{R}^{k}$, define the standard centered
Gaussian, so with mean 0, and covariance matrix $\left(K\left(x_{i},x_{j}\right)\right)_{i,j=1}^{k}$.
Then apply Kolmogorov consistency, and $\mu=\mathbb{P}_{Kolm}\left(K\right)$
will be the corresponding measure, also called the Wiener measure.
Setting, for $x\in U$, 
\begin{equation}
r_{x}\left(s\right)=s\left(x\right);\label{eq:p5}
\end{equation}
and the desired conclusions follow: 
\begin{enumerate}
\item Each $r_{x}\in L^{2}\left(\mu\right)$ is Gaussian,
\item $\mathbb{E}\left(r_{x}\right)=\int r_{x}d\mu=0$, 
\item $\mathbb{E}\left(r_{x}r_{y}\right)=K\left(x,y\right)$, $\forall x,y\in U$. 
\end{enumerate}
\end{proof}
\begin{thm}
Let $K:U\times U\longrightarrow\mathbb{R}$ be a positive definite
(p.d.) kernel, and let $\left(S,\mathscr{B},\mu,r\right)$ be a feature
space as specified in \defref{p1}; in particular, we have $r_{x}\in L^{2}\left(\mu\right)$
, $\forall x\in U$, and $K\left(x,y\right)=\int_{S}r_{x}\left(s\right)r_{y}\left(s\right)d\mu\left(s\right)$,
see (\ref{eq:p1}). 
\begin{enumerate}
\item Set 
\begin{equation}
R\left(x,A\right)=\int_{A}r_{x}\left(s\right)d\mu\left(s\right),\label{eq:p6}
\end{equation}
for $x\in U$, $A\in\mathscr{B}_{fin}$, where 
\begin{equation}
\mathscr{B}_{fin}=\left\{ A\in\mathscr{B}\mathrel{;}\mu\left(A\right)<\infty\right\} ;\label{eq:p7}
\end{equation}
then $R$ in (\ref{eq:p6}) is a measure in the second variable, and
it is measurable in $x$ $\left(\in U\right)$. 
\item For all $F\in\mathscr{H}\left(K\right)$, we have 
\begin{equation}
\left\Vert F\right\Vert _{\mathscr{H}\left(K\right)}^{2}=\int_{S}\left|\left\langle F\left(\cdot\right),R\left(\cdot,ds\right)\right\rangle _{\mathscr{H}\left(K\right)}\right|^{2},\label{eq:p8}
\end{equation}
and 
\begin{equation}
F\left(x\right)=\int_{S}r_{x}\left(s\right)\left\langle F\left(\cdot\right),R\left(\cdot,ds\right)\right\rangle _{\mathscr{H}\left(K\right)}.\label{eq:p9}
\end{equation}
\end{enumerate}
\end{thm}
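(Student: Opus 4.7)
The plan is to factor everything through the standard \emph{feature-map operator} $L\colon L^{2}(\mu)\longrightarrow\mathscr{H}(K)$,
\[
(Lf)(x):=\int_{S}r_{x}(s)\,f(s)\,d\mu(s)=\langle f,r_{x}\rangle_{L^{2}(\mu)},\qquad f\in L^{2}(\mu),\;x\in U.
\]
First I would verify that $Lf\in\mathscr{H}(K)$ via \lemref{rk1}: for any finite system $\{\alpha_{i}\},\{x_{i}\}$, Cauchy--Schwarz in $L^{2}(\mu)$ together with the feature identity (\ref{eq:p1}) give
\[
\bigl|\textstyle\sum_{i}\alpha_{i}(Lf)(x_{i})\bigr|^{2}\le\|f\|_{L^{2}(\mu)}^{2}\bigl\|\textstyle\sum_{i}\alpha_{i}r_{x_{i}}\bigr\|_{L^{2}(\mu)}^{2}=\|f\|_{L^{2}(\mu)}^{2}\textstyle\sum_{i,j}\alpha_{i}\alpha_{j}K(x_{i},x_{j}),
\]
so $L$ is well-defined and contractive. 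This immediately proves part (i): for $A\in\mathscr{B}_{fin}$ one has $1_{A}\in L^{2}(\mu)$, hence the function $x\mapsto R(x,A)=(L1_{A})(x)$ lies in $\mathscr{H}(K)$; $\sigma$-additivity in $A$ is the standard fact that $A\mapsto\int_{A}r_{x}\,d\mu$ is an absolutely continuous signed measure for fixed $x$ (since $r_{x}\in L^{1}_{loc}(\mu)$); measurability in $x$ follows from joint measurability of $(x,s)\mapsto r_{x}(s)$ via Fubini.

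Next I would identify the adjoint $L^{*}\colon\mathscr{H}(K)\to L^{2}(\mu)$ on reproducing kernels. For every $y\in U$ and $g\in L^{2}(\mu)$, the reproducing property combined with the definition of $L$ yields
\[
\langle L^{*}K(\cdot,y),g\rangle_{L^{2}(\mu)}=\langle K(\cdot,y),Lg\rangle_{\mathscr{H}(K)}=(Lg)(y)=\langle r_{y},g\rangle_{L^{2}(\mu)},
\]
so $L^{*}K(\cdot,y)=r_{y}$. Consequently
\[
\|L^{*}K(\cdot,y)\|_{L^{2}(\mu)}^{2}=\|r_{y}\|_{L^{2}(\mu)}^{2}=K(y,y)=\|K(\cdot,y)\|_{\mathscr{H}(K)}^{2}.
\]
Since $\mathrm{span}\{K(\cdot,y):y\in U\}$ is dense in $\mathscr{H}(K)$ and inner products extend by sesquilinearity, this identity propagates: $L^{*}$ is an isometry, equivalently $LL^{*}=I_{\mathscr{H}(K)}$ and $L$ is a coisometry onto $\mathscr{H}(K)$.

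For part (ii) I would fix $F\in\mathscr{H}(K)$ and set $\varphi:=L^{*}F\in L^{2}(\mu)$. Using $R(\cdot,A)=L1_{A}$ and the adjoint relation,
\[
\langle F,R(\cdot,A)\rangle_{\mathscr{H}(K)}=\langle F,L1_{A}\rangle_{\mathscr{H}(K)}=\langle L^{*}F,1_{A}\rangle_{L^{2}(\mu)}=\int_{A}\varphi\,d\mu,
\]
for all $A\in\mathscr{B}_{fin}$. Thus the set-function $A\mapsto\langle F,R(\cdot,A)\rangle_{\mathscr{H}(K)}$ is a signed measure on $\mathscr{B}_{fin}$, absolutely continuous with respect to $\mu$ with Radon--Nikodym derivative $\varphi$; under the natural interpretation $\langle F(\cdot),R(\cdot,ds)\rangle_{\mathscr{H}(K)}=\varphi(s)\,d\mu(s)$. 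The Parseval identity (\ref{eq:p8}) then follows from the isometry of $L^{*}$:
\[
\int_{S}\bigl|\langle F(\cdot),R(\cdot,ds)\rangle_{\mathscr{H}(K)}\bigr|^{2}=\int_{S}|\varphi|^{2}\,d\mu=\|L^{*}F\|_{L^{2}(\mu)}^{2}=\|F\|_{\mathscr{H}(K)}^{2},
\]
while the reconstruction formula (\ref{eq:p9}) is $LL^{*}=I_{\mathscr{H}(K)}$ rewritten:
\[
\int_{S}r_{x}(s)\,\langle F(\cdot),R(\cdot,ds)\rangle_{\mathscr{H}(K)}=\int_{S}r_{x}(s)\varphi(s)\,d\mu(s)=(L\varphi)(x)=(LL^{*}F)(x)=F(x).
\]

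The only genuine obstacle is the interpretive one in the second paragraph, namely justifying that the symbol $\langle F(\cdot),R(\cdot,ds)\rangle_{\mathscr{H}(K)}$ defines a bona fide signed measure absolutely continuous with respect to $\mu$, so that $L^{2}$-integration against it is meaningful; once the identification with $L^{*}F\,d\mu$ is in place everything else is operator-theoretic bookkeeping. The measurability issue in part (i) is harmless provided one postulates joint measurability of the feature map, which is implicit in regarding $r$ as a mapping into $L^{2}(\mu)$.
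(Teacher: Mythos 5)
Your proof is correct, and it rests on the same central object as the paper's: the isometric correspondence $K(\cdot,x)\leftrightarrow r_{x}$ between $\mathscr{H}\left(K\right)$ and the closed span of the feature vectors in $L^{2}\left(\mu\right)$ (your $L^{*}$ is the paper's $J$ from (\ref{eq:p10})--(\ref{eq:p11}), and your operator $L$ is exactly the one the paper later isolates in \propref{tr1}). The execution differs in a way worth noting. The paper verifies the isometry on finite linear combinations of kernel functions and then asserts that it suffices to check (\ref{eq:p8})--(\ref{eq:p9}) for $F=K\left(\cdot,y\right)$, computing both sides directly from (\ref{eq:p1}); the passage from the dense span to general $F\in\mathscr{H}\left(K\right)$, and in particular the meaning of the symbol $\left\langle F\left(\cdot\right),R\left(\cdot,ds\right)\right\rangle _{\mathscr{H}\left(K\right)}$ as a measure, is left implicit. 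You instead treat general $F$ at once by setting $\varphi=L^{*}F$ and proving the Radon--Nikodym identification $\left\langle F,R\left(\cdot,A\right)\right\rangle _{\mathscr{H}\left(K\right)}=\int_{A}\varphi\,d\mu$, after which (\ref{eq:p8}) is the isometry of $L^{*}$ and (\ref{eq:p9}) is $LL^{*}=I_{\mathscr{H}\left(K\right)}$. That buys two things the paper glosses over: a precise meaning for the $ds$-pairing in (\ref{eq:p8})--(\ref{eq:p9}), and an argument that does not require interchanging the limit over the dense span with the integral against an only formally defined measure. You also supply the (omitted) verification of part (i), correctly flagging that measurability in $x$ needs joint measurability of $\left(x,s\right)\mapsto r_{x}\left(s\right)$, a hypothesis the paper takes for granted. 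One small point to make explicit: to propagate the isometry of $L^{*}$ from single kernels to the whole space you should invoke the polarized identity $\left\langle L^{*}K\left(\cdot,x\right),L^{*}K\left(\cdot,y\right)\right\rangle _{L^{2}\left(\mu\right)}=K\left(x,y\right)$, not merely the equality of norms; your appeal to sesquilinearity covers this, but it deserves a displayed line.
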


\begin{rem}
We study the the parallel between the present conclusions (\ref{eq:p8})-(\ref{eq:p9}),
and the more familiar ones (\ref{eq:p3})-(\ref{eq:p4}) from standard
Parseval frame-theory, see, e.g., \cite{MR2254502,MR3204026,MR3275625},
and also see \cite{MR1101548,MR1240307,MR2468304} for direct integrals.
\end{rem}

\begin{proof}
Note first that there is a natural isometry $J$ defined by limits
and closure as follows: 
\begin{equation}
J\left(\sum\nolimits _{i}\alpha_{i}K\left(\cdot,x_{i}\right)\right)=\sum\nolimits _{i}\alpha_{i}r_{x_{i}}.\label{eq:p10}
\end{equation}
Indeed for finite sample $k$, $\alpha_{i}\in\mathbb{R}$, $x_{i}\in U$,
$1\leq i\leq k$, we have 
\begin{equation}
\left\Vert \sum\nolimits _{i}\alpha_{i}K\left(\cdot,x_{i}\right)\right\Vert _{\mathscr{H}\left(K\right)}^{2}=\left\Vert \sum\nolimits _{i}\alpha_{i}r_{x_{i}}\right\Vert _{L^{2}\left(\mu\right)}^{2}\label{eq:p11}
\end{equation}
since both sides in (\ref{eq:p11}) reduce to $\sum_{i}\sum_{j}\alpha_{i}\alpha_{j}K\left(x_{i},x_{j}\right)$.
As a result, in order to verify (\ref{eq:p8})-(\ref{eq:p9}), we
need only consider the case $F\left(\cdot\right)=K\left(\cdot,y\right)$
when $y\in U$ is fixed. Then it is enough to show that (\ref{eq:p8})
holds, and (\ref{eq:p9}) will follow. 

Let $y\in U$ be fixed, and assume $F\left(\cdot\right)=K\left(\cdot,y\right)$.
Then 
\begin{align*}
\text{LHS}_{\left(\ref{eq:p8}\right)} & =K\left(y,y\right),\;\text{and}\\
\text{\ensuremath{\text{RHS}_{\left(\ref{eq:p8}\right)}}} & =\int_{S}\left|r_{y}\left(s\right)\right|^{2}d\mu=K\left(y,y\right),
\end{align*}
by (\ref{eq:p1}). Similarly, 
\begin{align*}
\text{LHS}_{\left(\ref{eq:p9}\right)} & =K\left(x,y\right),\;\text{and}\\
\text{RHS}_{\left(\ref{eq:p9}\right)} & =\int_{S}r_{y}\left(s\right)r_{x}\left(s\right)d\mu\left(s\right)=K\left(x,y\right),
\end{align*}
again from an application of assumption (\ref{eq:p1}).
\end{proof}

\section{Transforms}

Let $K:U\times U\longrightarrow\mathbb{R}$ be a positive kernel.
(We shall state the result below for the real case but extensions
to p.d. functions with values in $\mathbb{C}$ are straightforward;
even to the case of operator valued kernels.) Now let $\left(S,\mathscr{B},\mu\right)$
be a measure space with $\mu$ fixed and assume $\sigma$-finite.
We shall further assume that $L^{2}\left(\mu\right)$ is a \emph{feature
space}; see \defref{p1} and \remref{p1}, i.e., we assume that there
is a function, $U\xrightarrow{\;F\;}L^{2}\left(\mu\right)$, $F\left(x\right)=r_{x}\left(\cdot\right)\left(\in L^{2}\left(\mu\right)\right)$
such that (\ref{eq:p1}) holds.
\begin{prop}
\label{prop:tr1}~
\begin{enumerate}
\item \label{enu:t1-1}With the setting $\left(K,U,\mu,\left\{ r_{x}\right\} _{x\in U}\right)$
as above, there is then a unique isometry $J:\mathscr{H}\left(K\right)\longrightarrow L^{2}\left(\mu\right)$
specified by 
\begin{equation}
J\left(K\left(\cdot,x\right)\right)=r_{x}.\label{eq:t1}
\end{equation}
\item \label{enu:t1-2}The adjoint operator of $J$ is $L:=J^{*}:L^{2}\left(\mu\right)\longrightarrow\mathscr{H}\left(K\right)$,
given by 
\begin{equation}
\left(Lh\right)\left(x\right)=\int_{S}h\left(s\right)r_{x}\left(s\right)d\mu\left(s\right),\label{eq:t2}
\end{equation}
$\forall h\in L^{2}\left(\mu\right)$, and $x\in U$. 
\item \label{enu:t1-3}$Q:=JJ^{*}=L^{*}L$ is the projection in $L^{2}\left(\mu\right)$
onto the closed subspace spanned by $\left\{ r_{x}\left(\cdot\right)\mathrel{;}x\in U\right\} \left(\subseteq L^{2}\left(\mu\right)\right)$. 
\end{enumerate}
\end{prop}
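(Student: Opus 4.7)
The plan is to first define $J$ on the natural dense subspace $\mathscr{D}:=\operatorname{span}\{K(\cdot,x)\mathrel{;}x\in U\}\subseteq\mathscr{H}(K)$ by the formula in (\ref{eq:t1}) extended linearly, and to verify that it is well-defined and isometric from $\mathscr{D}$ into $L^{2}(\mu)$. Concretely, for a finite sum $F=\sum_{i}\alpha_{i}K(\cdot,x_{i})\in\mathscr{D}$, the reproducing property gives
\[
\|F\|_{\mathscr{H}(K)}^{2}=\sum\nolimits_{i}\sum\nolimits_{j}\alpha_{i}\alpha_{j}K(x_{i},x_{j}),
\]
while the feature-space identity (\ref{eq:p1}) gives the same value for $\|\sum_{i}\alpha_{i}r_{x_{i}}\|_{L^{2}(\mu)}^{2}$. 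This is exactly the computation (\ref{eq:p11}) already established in the preceding proof. Isometry on $\mathscr{D}$ then makes $J$ well-defined on $\mathscr{D}$ (the kernel of the formal linear map is the kernel of the norm, hence trivial in $\mathscr{H}(K)$) and yields the unique isometric extension $J:\mathscr{H}(K)\longrightarrow L^{2}(\mu)$ by density of $\mathscr{D}$ in $\mathscr{H}(K)$, which is a standard consequence of Aronszajn's theorem.

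For part (\ref{enu:t1-2}), I would identify $L=J^{*}$ by testing the adjoint relation on the reproducing kernels. For $h\in L^{2}(\mu)$ and $x\in U$,
\[
(Lh)(x)=\langle Lh,K(\cdot,x)\rangle_{\mathscr{H}(K)}=\langle h,JK(\cdot,x)\rangle_{L^{2}(\mu)}=\langle h,r_{x}\rangle_{L^{2}(\mu)}=\int_{S}h(s)r_{x}(s)\,d\mu(s),
\]
using the reproducing axiom (\ref{eq:a6}), the definition of the adjoint, and (\ref{eq:t1}). Measurability and integrability are guaranteed by $r_{x}\in L^{2}(\mu)$ and Cauchy--Schwarz, which also gives $(Lh)(\cdot)\in\mathscr{H}(K)$ via Lemma \ref{lem:rk1} applied to $\psi=Lh$ with constant $C_{\psi}=\|h\|_{L^{2}(\mu)}^{2}$.

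For part (\ref{enu:t1-3}), since $J$ is an isometry, $J^{*}J=I_{\mathscr{H}(K)}$, and therefore $Q:=JJ^{*}$ satisfies $Q^{2}=J(J^{*}J)J^{*}=JJ^{*}=Q$ and $Q^{*}=Q$; so $Q$ is an orthogonal projection on $L^{2}(\mu)$. Its range equals the range of $J$, which by construction is the closure of $J(\mathscr{D})=\operatorname{span}\{r_{x}\mathrel{;}x\in U\}$ in $L^{2}(\mu)$. Finally, $JJ^{*}=L^{*}L$ because $L=J^{*}$ and $L^{*}=J^{**}=J$.

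The only non-routine point is the well-definedness of $J$ on $\mathscr{D}$: one has to know that two different finite linear combinations representing the same element of $\mathscr{H}(K)$ map to the same element of $L^{2}(\mu)$, and this is precisely what the isometric identity on $\mathscr{D}$ guarantees (the difference has norm zero on both sides simultaneously). Everything else is a mechanical verification.
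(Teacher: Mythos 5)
Your proposal is correct and follows essentially the same route as the paper: the isometry is established on the dense span of kernel functions via the identity \(\bigl\Vert\sum_i\alpha_iK(\cdot,x_i)\bigr\Vert_{\mathscr{H}(K)}^2=\bigl\Vert\sum_i\alpha_ir_{x_i}\bigr\Vert_{L^2(\mu)}^2\), the adjoint is identified by testing against the reproducing kernels, and \(Q=JJ^*\) is recognized as the range projection of an isometry. The only (harmless) difference is one of direction in part (ii): the paper starts from the integral formula \eqref{eq:t2} and must invoke Lemma \ref{lem:rk1} to see that \(Lh\in\mathscr{H}(K)\), whereas you define \(L=J^*\) abstractly and derive the formula, which makes your appeal to Lemma \ref{lem:rk1} redundant rather than necessary.
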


\begin{proof}
(\ref{enu:t1-1}) Let the setting be as specified in the Proposition.
Define $J$ as in (\ref{eq:t1}) and extend by linearity, first on
finite linear combinations 
\begin{equation}
J\left(\sum\nolimits _{i}\alpha_{i}K\left(\cdot,x_{i}\right)\right)=\sum\nolimits _{i}\alpha_{i}r_{x_{i}}\left(\cdot\right).\label{eq:t3}
\end{equation}
It is isometric, since 
\[
\left\Vert \sum\nolimits _{i}\alpha_{i}K\left(\cdot,x_{i}\right)\right\Vert _{\mathscr{H}\left(K\right)}^{2}=\sum\nolimits _{i}\sum\nolimits _{j}\alpha_{i}\alpha_{j}K\left(x_{i},x_{j}\right),
\]
and 
\begin{align*}
\int_{S}\left|\sum\nolimits _{i}\alpha_{i}r_{x_{i}}\left(\cdot\right)\right|^{2}d\mu & =\sum\nolimits _{i}\sum\nolimits _{j}\alpha_{i}\alpha_{j}\int_{S}r_{x_{i}}r_{x_{j}}d\mu\\
 & =\sum\nolimits _{i}\sum\nolimits _{j}\alpha_{i}\alpha_{j}K\left(x_{i},x_{j}\right),
\end{align*}
where we used (\ref{eq:p1}) in the last step. Since $J$ is thus
isometric on a dense subspace in $\mathscr{H}\left(K\right)$, it
extends uniquely by limits, to define an isometry $J:\mathscr{H}\left(K\right)\longrightarrow L^{2}\left(\mu\right)$
as required in (\ref{enu:t1-1}).

(\ref{enu:t1-2}) We now turn to the operator $L:L^{2}\left(\mu\right)\longrightarrow\mathscr{H}\left(K\right)$
as in (\ref{eq:t2}). The important point is that $L$ maps into $\mathscr{H}\left(K\right)$.
This follows from an application of \lemref{rk1} as follows. Let
$n\in\mathbb{N}$, $\alpha_{i}\in\mathbb{R}$, $x_{i}\in U$, $1\leq i\leq n$;
then 
\begin{eqnarray*}
\left|\sum\nolimits _{i}\alpha_{i}\left(Lh\right)\left(x_{i}\right)\right|^{2} & = & \left|\int_{S}\left(\sum\nolimits _{i}\alpha_{i}r_{x_{i}}\right)hd\mu\right|^{2}\\
 & \underset{\text{Schwarz}}{\leq} & \int_{S}\left|\sum\nolimits _{i}\alpha_{i}r_{x_{i}}\right|^{2}d\mu\int_{S}\left|h\right|^{2}d\mu\\
 & \underset{\text{by \ensuremath{\left(\ref{enu:t1-2}\right)}}}{=} & \sum\nolimits _{i}\sum\nolimits _{j}\alpha_{i}\alpha_{j}K\left(x_{i},x_{j}\right)\left\Vert h\right\Vert _{L^{2}\left(\mu\right)}^{2},
\end{eqnarray*}
and the conclusion now follows from \lemref{rk1}.

(\ref{enu:t1-3}) We have, since $Lh\in\mathscr{H}\left(K\right)$,
\begin{eqnarray*}
\left\langle K\left(\cdot,x\right),Lh\right\rangle _{\mathscr{H}\left(K\right)} & = & \left(Lh\right)\left(x\right)\\
 & \underset{\text{by \ensuremath{\left(\ref{eq:t2}\right)}}}{=} & \int_{S}r_{x}hd\mu=\left\langle r_{x},h\right\rangle _{L^{2}\left(\mu\right)},
\end{eqnarray*}
so $L^{*}\left(K\left(\cdot,x\right)\right)=r_{x}$, $L^{*}=J$, and
$J^{*}=L$, now follow from (\ref{enu:t1-1}). Since $J$ is isometric,
$Q=JJ^{*}$ is the projection specified in (\ref{enu:t1-3}) in the
Proposition.
\end{proof}

\section{Hilbert spaces of signed measures, and of distributions}

Let $K:U\times U\longrightarrow\mathbb{R}$ be a fixed positive definite
(p.d.) kernel, and let $\mathscr{H}\left(K\right)$ be the corresponding
RKHS. Suppose $U$ is a \emph{metric space}, and that $K$ is continuous.
(This is not a strict restriction since $K$ automatically induces
a metric $d_{K}$ on $U$ given by 
\begin{align*}
d_{K}\left(x,y\right) & =\left\Vert K\left(\cdot,x\right)-K\left(\cdot,y\right)\right\Vert _{\mathscr{H}\left(K\right)}\\
 & =\left(K\left(x,x\right)+K\left(y,y\right)-2K\left(x,y\right)\right)^{\frac{1}{2}},
\end{align*}
and 
\begin{equation}
\left|K\left(x_{1},y\right)-K\left(x_{2},y\right)\right|\leq d_{K}\left(x_{1},x_{2}\right)K\left(y,y\right)^{\frac{1}{2}},\;\forall x_{1},x_{2},y\in U.)\label{eq:hm1}
\end{equation}

Introduce the Dirac delta measures $\left\{ \delta_{x}\right\} $,
$x\in U$, we get $\delta_{x}K\delta_{y}=K\left(x,y\right)$; or more
precisely, 
\begin{equation}
\int_{U}\int_{U}K\left(s,t\right)d\delta_{x}\left(s\right)d\delta_{y}\left(t\right)=K\left(x,y\right).\label{eq:hm2}
\end{equation}
If $n\in\mathbb{N}$, $\alpha_{i}\in\mathbb{R}$, $x_{i}\in U$, $1\leq i\leq n$,
set 
\begin{equation}
\xi:=\sum\nolimits _{i=1}^{n}\alpha_{i}\delta_{x_{i}},\label{eq:hm3}
\end{equation}
and we get 
\begin{equation}
\xi K\xi=\sum\nolimits _{i}\sum\nolimits _{j}\alpha_{i}\alpha_{j}K\left(x_{i},x_{j}\right)=\left\Vert \sum\nolimits _{i}\alpha_{i}K\left(\cdot,x_{i}\right)\right\Vert _{\mathscr{H}\left(K\right)}^{2}.\label{eq:hm4}
\end{equation}
Hence, if we complete the measures from (\ref{eq:hm3}) with respect
to (\ref{eq:hm4}), we arrive at a Hilbert space $\mathscr{L}\left(K\right)$
consisting of signed measures, or of more general linear functionals,
e.g., distributions.
\begin{prop}
\label{prop:hm1}Let $K$, $U$, $\mathscr{H}\left(K\right)$ and
$\mathscr{L}\left(K\right)$ be specified as above; then 
\begin{equation}
J\left(\sum\nolimits _{i}\alpha_{i}\delta_{i}\right):=\sum\nolimits _{i}\alpha_{i}K\left(\cdot,x_{i}\right)\label{eq:hm5}
\end{equation}
defines an isometry of $\mathscr{L}\left(K\right)$ onto $\mathscr{H}\left(K\right)$
. 
\end{prop}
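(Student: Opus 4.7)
The plan is to build $J$ first on the dense subspace of finite linear combinations of Dirac measures and then extend by continuity. Equation (\ref{eq:hm4}) already does most of the heavy lifting: it asserts precisely that the quadratic form defining $\mathscr{L}(K)$ agrees under $J$ with the squared norm of the image in $\mathscr{H}(K)$, so $J$ is tautologically norm-preserving on the pre-Hilbert subspace of finitely supported signed measures.

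More concretely, I would first verify that $J$ as given by (\ref{eq:hm5}) is well defined on the pre-Hilbert space of atomic measures $\xi=\sum_i\alpha_i\delta_{x_i}$. The only ambiguity is that the representation of $\xi$ is not unique (one may merge or split atoms, or add zero terms); but if two representations $\sum_i\alpha_i\delta_{x_i}$ and $\sum_j\beta_j\delta_{y_j}$ agree as elements of $\mathscr{L}(K)$, then their difference $\eta$ satisfies $\eta K\eta=0$, and by (\ref{eq:hm4}) the corresponding element $\sum_i\alpha_iK(\cdot,x_i)-\sum_j\beta_jK(\cdot,y_j)$ has $\mathscr{H}(K)$-norm zero, hence equals $0$ in $\mathscr{H}(K)$. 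Thus $J$ is a well-defined linear map from atomic measures into $\mathscr{H}(K)$, and (\ref{eq:hm4}) asserts precisely the isometry identity $\|\xi\|_{\mathscr{L}(K)}=\|J\xi\|_{\mathscr{H}(K)}$ on this dense subspace.

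Since atomic measures are, by construction, dense in $\mathscr{L}(K)$, the isometry extends uniquely by limits to a bounded linear isometry $J:\mathscr{L}(K)\longrightarrow\mathscr{H}(K)$. For surjectivity, I would invoke Aronszajn's theorem: the span of the reproducing kernels $\{K(\cdot,x)\}_{x\in U}$ is dense in $\mathscr{H}(K)$. This dense span is exactly the image under $J$ of the atomic measures, hence lies in the range of $J$. An isometry from a Hilbert space has closed range, so the range of $J$ is all of $\mathscr{H}(K)$.

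No real obstacle arises; the only subtle point is the well-definedness step in the first paragraph, which is precisely why the construction of $\mathscr{L}(K)$ is arranged to pass to the quotient by vectors of $\mathscr{L}(K)$-seminorm zero before completing. Once this is set up cleanly, the proposition is essentially a restatement of (\ref{eq:hm4}) together with the density of the kernel sections in $\mathscr{H}(K)$.
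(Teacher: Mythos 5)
Your proof is correct and follows essentially the same route as the paper's, which simply observes that $\xi K\xi=\left\Vert J\xi\right\Vert _{\mathscr{H}\left(K\right)}^{2}$ by (\ref{eq:hm4}) and that everything else follows from the definitions. You supply the details the paper leaves implicit (well-definedness on the quotient by null vectors, extension by density, and surjectivity via density of the kernel sections together with closedness of the range of an isometry), but the underlying argument is the same.
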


\begin{proof}
This follows from the definitions. In particular, for $\xi\in\mathscr{L}\left(K\right)$,
we have, by (\ref{eq:hm4})
\begin{equation}
\xi K\xi=\left\Vert J\xi\right\Vert _{\mathscr{H}\left(K\right)}^{2}.\label{eq:hm6}
\end{equation}
\end{proof}
\begin{cor}
Let $K$, $U$, $\mathscr{H}\left(K\right)$ and $\mathscr{L}\left(K\right)$
be as stated in \propref{hm1}, and let $\left(S,\mathscr{B},\mu\right)$
be a $\sigma$-finite measure space such that $L^{2}\left(\mu\right)$
is a feature space; see \defref{p1}. Then a signed measure $\xi$
is in $\mathscr{L}\left(K\right)$ if and only if 
\begin{equation}
\int_{S}\left|\int_{U}r_{x}\left(s\right)d\xi\left(x\right)\right|^{2}d\mu\left(s\right)<\infty;\label{eq:hm7}
\end{equation}
and in this case $\xi K\xi=$ the RHS in (\ref{eq:hm7}).
\end{cor}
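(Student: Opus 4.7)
The plan is to chain two isometries that have already been constructed in the excerpt. By \propref{hm1}, the map $J : \mathscr{L}(K) \longrightarrow \mathscr{H}(K)$ defined on the dense subspace of finitely supported signed measures by $J(\delta_{x}) = K(\cdot, x)$ is an isometric isomorphism. By \propref{tr1}(\ref{enu:t1-1}), the feature‑space hypothesis gives an isometry $J_{r} : \mathscr{H}(K) \longrightarrow L^{2}(\mu)$ with $J_{r}\bigl(K(\cdot, x)\bigr) = r_{x}$. Composing, we obtain an isometry
\[
T := J_{r} \circ J : \mathscr{L}(K) \longrightarrow L^{2}(\mu), \qquad T(\delta_{x}) = r_{x}.
\]
The target of $T$ is by \propref{tr1}(\ref{enu:t1-3}) the closed subspace $\overline{\operatorname{span}}\{r_{x} : x \in U\} \subseteq L^{2}(\mu)$.

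First I would handle the finitely supported case. For $\xi = \sum_{i} \alpha_{i} \delta_{x_{i}}$, the image $T\xi = \sum_{i} \alpha_{i} r_{x_{i}}(\,\cdot\,)$ is literally the pointwise integral $\int_{U} r_{x}(s)\, d\xi(x)$, and the isometry property combined with (\ref{eq:hm4}) gives immediately
\[
\xi K \xi \;=\; \|T\xi\|_{L^{2}(\mu)}^{2} \;=\; \int_{S} \Bigl|\int_{U} r_{x}(s)\, d\xi(x)\Bigr|^{2} d\mu(s),
\]
which is the asserted identity on the dense subspace of $\mathscr{L}(K)$.

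For a general signed measure $\xi$ satisfying (\ref{eq:hm7}), the strategy is to show that the distribution‑theoretic extension of $T$ agrees with the concrete integral $s \mapsto \int_{U} r_{x}(s)\, d\xi(x)$. Continuity of $K$ implies $x \mapsto r_{x}$ is continuous from $U$ into $L^{2}(\mu)$ (by (\ref{eq:hm1}) applied to $\|r_{x_{1}} - r_{x_{2}}\|_{L^{2}(\mu)}^{2} = d_{K}(x_{1}, x_{2})^{2}$), so the map $(x, s) \mapsto r_{x}(s)$ is jointly measurable up to a $\mu$‑null set, and the function $h_{\xi}(s) := \int_{U} r_{x}(s)\, d\xi(x)$ is a well‑defined measurable function lying in $L^{2}(\mu)$ by assumption (\ref{eq:hm7}). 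Using (\ref{eq:p1}) together with Fubini, for every $y \in U$ I would verify
\[
\langle h_{\xi}, r_{y}\rangle_{L^{2}(\mu)} \;=\; \int_{U} K(x, y)\, d\xi(x) \;=\; \int_{U}\int_{U} K(x, y)\, d\xi(x)\, d\delta_{y}(y),
\]
so that pairing $h_{\xi}$ against $r_{y}$ reproduces the pairing $\xi K \delta_{y}$. Pulling back through $J_{r}^{*}$ and $J^{*}$, this identifies $h_{\xi}$ with $T\xi$ in the completion, establishing $\xi \in \mathscr{L}(K)$ with $\xi K \xi = \|h_{\xi}\|_{L^{2}(\mu)}^{2}$.

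The main obstacle is the last paragraph: showing that the abstract image $T\xi$ (defined a priori only by continuous extension from finite sums) coincides with the concrete pointwise integral $h_{\xi}$. This requires joint measurability of $(x, s) \mapsto r_{x}(s)$ and a Fubini/dominated‑convergence interchange to approximate $\xi$ by finitely supported measures $\xi_{n}$ with $T\xi_{n} \to h_{\xi}$ in $L^{2}(\mu)$; the continuity of $x \mapsto r_{x}$ in $L^{2}(\mu)$, together with the hypothesis (\ref{eq:hm7}) serving as a dominating bound through Cauchy--Schwarz, is what makes this interchange legitimate. Once that identification is made, both the ``if and only if'' and the norm formula fall out of the isometry $T$.
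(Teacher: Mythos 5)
Your proposal is correct and follows essentially the same route as the paper: chain the isometry $\mathscr{L}(K)\simeq\mathscr{H}(K)$ from Proposition \ref{prop:hm1} with the feature-space isometry $K(\cdot,x)\mapsto r_{x}$ from Proposition \ref{prop:tr1}, then apply Fubini to the identity $K(x,y)=\int_{S}r_{x}r_{y}\,d\mu$ to convert $\xi K\xi$ into the $L^{2}(\mu)$-norm of $s\mapsto\int_{U}r_{x}(s)\,d\xi(x)$. You are in fact more careful than the paper, which simply asserts that "the following computation is valid" and leaves implicit the identification of the abstract limit $T\xi$ with the concrete pointwise integral $h_{\xi}$ that your last paragraph addresses.
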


\begin{proof}
Since $\mathscr{L}\left(K\right)\xrightarrow[\;\simeq\;]{\;J\;}\mathscr{H}\left(K\right)$
and $K\left(\cdot,x\right)\longrightarrow r_{x}$ extends by limiting
and closure to an isometry $\mathscr{H}\left(K\right)\longrightarrow L^{2}\left(\mu\right)$,
by \propref{tr1}, the following computation is valid when $\xi\in\mathscr{L}\left(K\right)$,
and vise versa: 
\begin{align*}
\int_{U}\int_{U}K\left(x,y\right)d\xi\left(x\right)d\xi\left(y\right) & =\int_{U}\int_{U}\int_{S}r_{x}\left(s\right)r_{y}\left(s\right)d\mu\left(s\right)d\xi\left(x\right)d\xi\left(y\right)\\
 & =\int_{S}\left|\int_{U}r_{x}\left(s\right)d\xi\left(x\right)\right|^{2}d\mu\left(s\right)=\text{RHS}_{\left(\ref{eq:hm7}\right)}.
\end{align*}
 
\end{proof}
\begin{rem}
If $U\times U\xrightarrow{\;K\;}\mathbb{R}\left(\text{or \ensuremath{\mathbb{C}}}\right)$
is $C^{\infty}$, or analytic for suitable choices of $U$and $K$,
then we may have distribution solutions $\xi$ to $\xi K\xi<\infty$.
A simple example illustrating this is $U=\left(-1,1\right)=$ the
interval, and 
\[
K\left(x,y\right)=\frac{1}{1-xy}.
\]
For $n\in\mathbb{N}$, let $\xi=\delta_{0}^{\left(n\right)}=$ the
$n^{th}$ derivative of the Dirac measure at $x=0$. Then 
\[
\int_{U}K\left(x,y\right)d\xi\left(y\right)=\frac{n!x^{n}}{\left(1-xy\right)^{n+1}}\Big|_{y=0}=n!x^{n};
\]
and
\[
\int_{U}\int_{U}K\left(x,y\right)d\xi\left(x\right)d\xi\left(y\right)=\left(n!\right)^{2}.
\]
In fact, 
\[
\delta_{0}^{\left(n\right)}K\delta_{0}^{\left(m\right)}=\delta_{n,m}\left(n!\right)^{2},\;\forall n,m\in\mathbb{N};
\]
so the system $\{\delta_{0}^{\left(n\right)}\}_{n\in\left\{ 0\right\} \cup\mathbb{N}}$
is orthogonal and total in $\mathscr{L}\left(K\right)$. If $x\in U\backslash\left\{ 0\right\} $,
then 
\[
\delta_{x}=\sum\nolimits _{n=0}^{\infty}\frac{x^{n}}{n!}\delta_{0}^{\left(n\right)},
\]
as an identity for compactly supported distributions. 
\end{rem}

\begin{acknowledgement*}
The co-authors thank the following colleagues for helpful and enlightening
discussions: Professors Daniel Alpay, Sergii Bezuglyi, Ilwoo Cho,
Myung-Sin Song, Wayne Polyzou, and members in the Math Physics seminar
at The University of Iowa.
\end{acknowledgement*}
\bibliographystyle{amsalpha}
\bibliography{ref}

\end{document}